\def\multiset#1#2{\ensuremath{\left(\kern-.3em\left(\genfrac{}{}{0pt}{}{#1}{#2}\right)\kern-.3em\right)}}
\newcommand{\m}{\mathfrak{m} }
\newcommand{\wt}{\widetilde}
\newcommand{\R}{\mathcal{R}}
\newcommand{\I}{\mathcal{I}}
\newcommand{\reg}{\operatorname{reg}}
\newcommand{\depth}{\operatorname{depth}}
\newcommand{\iso}{\cong} 
\newcommand{\proset}{\,\mathrel{\lower 4pt\hbox{$\scriptscriptstyle/$}
		\mkern -14mu\subseteq }\,} 
\newcommand\tsup[2][2]{%
 \def\useanchorwidth{T}%
  \ifnum#1>1%
    \stackon[-.5pt]{\tsup[\numexpr#1-1\relax]{#2}}{\scriptscriptstyle\sim}%
  \else%
    \stackon[.5pt]{#2}{\scriptscriptstyle\sim}%
  \fi%
}
\newtheorem{theorem}{Theorem}[section]
\newtheorem{corollary}[theorem]{Corollary}
\newtheorem{lemma}[theorem]{Lemma}
\theoremstyle{definition}
\newtheorem{remark}[theorem]{Remark}
\newtheorem{definition}[theorem]{Definition}
\newtheorem{example}[theorem]{Example}
\title[Bounding reduction number and the Hilbert coefficients]{Bounding reduction number and the Hilbert coefficients of filtration}
\author[K Saloni and A K Yadav]{Kumari Saloni and Anoot Kumar Yadav}
\thanks{The first author is supported by SERB Power grant No.: $\mbox{SPG}/2022/002099$, Govt. of India}  
\thanks{ The second author is supported by a UGC fellowship NTA Ref. no. 191620042352, Govt. of India.}
\subjclass[2020]{13H10, 13D40, 13A30}
\keywords{Cohen-Macaulay local rings, reduction number, Ratliff-Rush filtration, Hilbert coefficients, Castelnuovo-Mumford regularity}
\address{Department of Mathematics, Indian Institute of Technology Patna, Bihta, Patna 801106, India}
\email{ksaloni@iitp.ac.in}
\address{Department of Mathematics, Indian Institute of Technology Patna, Bihta, Patna 801106, India} 
\email{anoot\_2021ma06@iitp.ac.in,\; vicky.anoot@gmail.com}
\begin{document}
\begin{abstract}
    Let $(A,\m)$ be a Cohen-Macaulay local ring of dimension $d\geq 3$, $I$ an $\m$-primary ideal and $\mathcal{I}=\{I_n\}_{n\geq 0}$ an $I$-admissible filtration. We establish bounds for the third Hilbert coefficient: (i) $e_3(\mathcal{I})\leq e_2(\mathcal{I})(e_2(\mathcal{I})-1)$ and (ii)  $e_3(I)\leq e_2(I)(e_2(I)-e_1(I)+e_0(I)-\ell(A/I))$ if $I$ is an integrally closed ideal. Further, assume the respective boundary cases along with the vanishing of $e_i(\mathcal{I})$ for $4\leq i\leq d$. Then we show that the associated graded ring of the Ratliff-Rush filtration of $\mathcal{I}$ is almost Cohen-Macaulay, Rossi's bound for the reduction number $r_J(I)$ of $I$ holds true and the reduction number of Ratliff-Rush filtration of $\mathcal{I}$ is bounded above by $r_J(\I).$ In addition, if $\wt{I^{r_J(I)}}=I^{r_J(I)}$, then we prove that 
    $\reg G_I(A)=r_J(I)$ 
     and a bound on the stability index of Ratliff-Rush filtration is obtained. We also do a parallel discussion on the \textquotedblleft good behaviour of the Ratliff-Rush filtration with respect to superficial sequence''. 
    \end{abstract}
\maketitle
\section{Introduction}
Throughout the paper, let $(A,\m)$ be a Cohen-Macaulay local ring of dimension $d\geq 1$ with infinite residue field and $I$ an $\m$-primary ideal. A celebrated result of Rossi \cite{r} gives a linear bound for the reduction number of $I$ in terms of multiplicity and the first Hilbert coefficient of $I$ when $d=2$, see \eqref{rossi bound}. It is a conjecture to establish the same bound in higher dimensions which is the primary motivation of this paper. In \cite{ms} and \cite{sy},  quadratic bounds, involving the second and the third Hilbert coefficients of $I$ were given in dimension three . Authors  in \cite{sy} observed that the \textquotedblleft good behaviour of the Ratliff-Rush filtration'' plays a crucial role in this context. This paper further builds upon the ideas of \cite{sy} in order to obtain computable conditions for the above observation and consequently makes progress in the direction of Rossi's conjecture. 

Let $\mathcal{I}$ be an $I$-admissible filtration. Recall that 
an $I$-admissible filtration  $\mathcal I=\{I_n\}_{n\in \mathbb Z}$ is a sequence of ideals  such that $(i)$ $I_{n+1}\subseteq I_n$,
   $(ii)$  $I_m I_n\subseteq I_{m+n}$ and $(iii)~ I^n\subseteq I_n\subseteq I^{n-k}$ for some $k\in \mathbb N$. The Hilbert coefficients of $\mathcal{I}$ are the unique integers $e_i(\mathcal{I})$, $0\leq i\leq d,$ such that the function $H_{\mathcal I}(n):=\ell{(R/I_n)}$ coincides with the following polynomial for $n\gg0:$  $$P_{\mathcal I}(x) =e_0(\mathcal I){x+d-1\choose d}-e_1(\mathcal{ I}){x+d-2\choose d-1}+\cdots +(-1)^de_d(\mathcal I).$$ 
Here, $\ell(*)$ denotes the length function. The function $H_{\mathcal I}(n)$ and the polynomial $P_{\mathcal I}(x)$ are known as the Hilbert-Samuel function and the Hilbert-Samuel polynomial of $\mathcal I$, respectively. We refer to \cite{rv} for details.  For $\mathcal I=\{I^n\}_{n\geq 0}$, we write $e_i(I)$ instead of $e_i(\mathcal I)$.  A {\it reduction} \index{reduction} of $\mathcal I$ is an ideal $J\subseteq I_1$ such that $JI_n=I_{n+1}$ for  $n\gg 0$ and it is called a {\it minimal reduction} if it is minimal with respect to containment among all reductions. If the residue field $R/\m$ is infinite, then a minimal reduction of $\mathcal I$ exists and is generated by $d$ elements. Minimal reductions are important in the study of Hilbert functions and blow-up algebras. For a minimal reduction $J$ of $\mathcal I$, we define 
    \[r_J(\mathcal I)= \sup \{n\in \mathbb Z \mid I_n\not= JI_{n-1}\} \mbox{ and } 
  r(\mathcal I)=\min\{r_J(\mathcal I)~|~ J \mbox{ is a minimal reduction of } \mathcal I\}.\]
 The above numbers are known as the {\it{reduction number of $\mathcal I$ with respect to $J$}} and  the {\it{reduction number of $\mathcal I$}}  respectively. When $\mathcal I$ is the $I$-adic filtration $\{I^n\}_{n\in\mathbb{Z}}$,  we write $r_J(I)$ and $r(I)$ in place of $r_J(\mathcal I)$ and $r(\mathcal I)$ respectively. If $J$ is a reduction of $I$, then $R[It]$ is module-finite over $R[Jt]$ and the  reduction number is the largest degree of the elements in a minimal homogeneous generating set of the ring $R[It]$ over $R[Jt].$ 
 Suppose $d\leq 2$. Rossi \cite[Corollary 1.5]{r} proved that, for a minimal reduction $J\subseteq I$, 
 \begin{equation}\label{rossi bound}
     r_J(I)\leq e_1(I)-e_0(I)+\ell(A/I)+1.
 \end{equation}
Establishing the same bound in  higher dimensions is an open problem.  Many attempts have been made to achieve bounds of similar character. For instance, the bound in \eqref{rossi bound} holds in all dimensions if $\depth G_I(A) \geq d-2$ \cite[Theorem 4.3]{rv} or if $e_1(I)-e_0(I)+\ell (A/I)=1$ \cite[Theorem 3.1]{gno}. Another case is when $I\subseteq k[x,y,z]$ is of codimension $3$ generated by five quadrics \cite[Theorem 2.1 and Proposition 2.4]{hsv}. We prove the following theorem. 
\begin{theorem}\label{red-bd}
  Suppose $d\geq 3.$ If the Ratliff-Rush filtration of $I$ behaves well modulo a superficial sequence $x_1,\ldots,x_{d-2}$, then  $r_J(I)\leq e_1(I)-e_0(I)+\ell(A/I)+1$.  
\end{theorem}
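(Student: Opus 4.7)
The plan is to reduce to the two-dimensional case, where Rossi's inequality \eqref{rossi bound} already applies, and to transfer the inequality back to $A$ by exploiting the good behaviour of the Ratliff-Rush filtration modulo the superficial sequence. Concretely, fix a minimal reduction $J$ of $I$ generated by $x_1,\dots,x_d$ that extends the given superficial sequence $x_1,\dots,x_{d-2}$ (which exists since the residue field is infinite and superficial sequences can be completed to minimal reductions). Set $A'=A/(x_1,\dots,x_{d-2})$, $I'=IA'$, and $J'=JA'$. Then $A'$ is a two-dimensional Cohen-Macaulay local ring and $J'$ is a minimal reduction of $I'$.

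First I would establish the three equalities
\[
\ell(A/I)=\ell(A'/I'),\qquad e_0(I)=e_0(I'),\qquad e_1(I)=e_1(I').
\]
The first is immediate since $x_1,\dots,x_{d-2}$ is a regular sequence contained in $I$. For the Hilbert coefficients one uses that $x_1,\dots,x_{d-2}$ is superficial together with the fact that $e_0$ and $e_1$ are preserved under passage modulo a superficial element whenever the Ratliff-Rush filtration is compatible with the reduction, which is precisely what "good behaviour modulo a superficial sequence" supplies iteratively. At this point Rossi's theorem applied in dimension two yields
\[
r_{J'}(I')\;\leq\;e_1(I')-e_0(I')+\ell(A'/I')+1\;=\;e_1(I)-e_0(I)+\ell(A/I)+1.
\]

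The main obstacle, and the heart of the proof, is to show that $r_J(I)\leq r_{J'}(I')$; a priori reduction numbers can increase under quotient by a superficial sequence. The idea is to lift via the Ratliff-Rush filtration: the good behaviour hypothesis is designed precisely to ensure that the Ratliff-Rush filtration of $I'$ is the image of $\{\widetilde{I^n}\}$, from which one deduces an equality of the form $\widetilde{I^{n+1}}+(x_1,\dots,x_{d-2})=J\widetilde{I^n}+(x_1,\dots,x_{d-2})$ for $n\geq r_{J'}(I')$. Since $x_1,\dots,x_{d-2}$ is a regular sequence of superficial elements, Nakayama-type cancellation (using that $\widetilde{I^{n+1}}\cap(x_i)\widetilde{I^{n}}$-type relations hold under good behaviour) lets one strip off the $x_i$'s one at a time to obtain $\widetilde{I^{n+1}}=J\widetilde{I^n}$ in $A$. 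Finally, since $\widetilde{I^n}=I^n$ for all $n\gg 0$, this gives $I^{n+1}=JI^n$ for $n\gg 0$, and combined with standard monotonicity one obtains $r_J(I)\leq r_{J'}(I')$.

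Chaining the inequalities yields the desired bound. The delicate step is the second paragraph's passage from the quotient back to $A$; everything else is bookkeeping with superficial sequences and the definitions of Hilbert coefficients and reduction numbers.
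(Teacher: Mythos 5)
Your strategy differs from the paper's, and the last step of your second paragraph contains a genuine gap. What your argument actually establishes --- via good behaviour, the dimension-two inequality $\widetilde{r}_{J'}(I')\le r_{J'}(I')$ of Rossi--Swanson, and the stripping of the $x_i$ (which is legitimate, since good behaviour forces $\operatorname{depth} G_{\widetilde{\mathcal I}}(A)\ge d-1$ by Remark~\ref{rmk1}(ii), so the initial forms of the $x_i$ are regular on $G_{\widetilde{\mathcal I}}(A)$) --- is $\widetilde{I^{n+1}}=J\widetilde{I^n}$ for all $n\ge r_{J'}(I')$, i.e.\ a bound on the reduction number $\widetilde{r}_J(I)$ of the Ratliff--Rush filtration, not on $r_J(I)$. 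The passage to $r_J(I)\le r_{J'}(I')$ does not follow: the monotonicity you invoke ($I^{n+1}=JI^n\Rightarrow I^{n+2}=JI^{n+1}$) propagates the equality upward in $n$, not downward, so knowing $I^{n+1}=JI^n$ for $n\gg 0$ is merely the statement that $J$ is a reduction and yields no bound on $r_J(I)$. To convert $\widetilde{I^{n+1}}=J\widetilde{I^n}$ at $n=r_{J'}(I')$ into $I^{n+1}=JI^n$ there, you would need $\widetilde{I^{n}}=I^{n}$ for $n$ down to $r_{J'}(I')$, i.e.\ control of the stability index $\rho(I)$; but $\rho(I)\le r_J(I)$ fails in general (Rossi--Trung--Trung, cited in the introduction), which is exactly why the paper treats $\rho$ separately in Theorem~\ref{Theorem5.1}. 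Note also that the inequality $r_J(I)\le r_{J'}(I')$ is the non-automatic direction (the automatic one is $r_{J'}(I')\le r_J(I)$), and it normally requires $\operatorname{depth}G_I(A)\ge 1$, which good behaviour does not provide: Example~\ref{ex1.2} has good behaviour with $\operatorname{depth}G_I(A)=0$.

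The paper's proof sidesteps this entirely: from $\operatorname{depth}G_{\mathcal F}(A)\ge d-1$ for $\mathcal F=\{\widetilde{I^n}\}$ it applies Rossi's Theorem 1.3 in dimension $d$, which bounds $r_J(I)$ itself by $\sum_{n\ge 0}\ell\bigl(\widetilde{I^{n+1}}/J\widetilde{I^n}\bigr)-e_0(I)+\ell(A/I)+1$, and this sum equals $e_1(I)$ because $e_1(I)=e_1(\widetilde{\mathcal I})$. The mechanism inside Rossi's theorem that converts Ratliff--Rush data into a bound on $r_J(I)$ rather than on $\widetilde{r}_J(I)$ is precisely the ingredient your proposal is missing; without it, or without a proof that $\rho(I)\le r_{J'}(I')$ under your hypotheses, the chain of inequalities does not close.
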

 We recall that the Ratliff-Rush filtration  of $\mathcal{I}$ is the filtration $\wt{\mathcal I}=\{\widetilde{I_n}=\mathop\bigcup\limits_{t\geq 0}(I_{n+t}:I^t)\}_{n\in\mathbb{Z}}$. It is well known  that $\wt{\mathcal{I}}$ is an $I$-admissible filtration, see \cite{bl}. 
Let $x_1,\ldots,x_s\in I$ be a superficial sequence for $\mathcal{I}$ and $A^\prime=A/(x_1)$. We say that the {\it{Ratliff-Rush filtration of $I$ behaves well modulo  $x_1$}} if $\widetilde{I^n}A^\prime = \widetilde{I^nA^\prime}$
 for all $n\geq 0.$  This definition can be  extended inductively, see Definition \ref{def1}. This notion, introduced in \cite{Put2} by Puthenpurakal, has been the main tool for solving Itoh's conjecture in the work of Puthepnurakal,  \cite{Put3} and \cite{Put4}.  In fact,  many results of Marley's thesis  can be obtained by replacing the condition $\depth G_{\mathcal{I}}(A)\geq d-1$ with the good behaviour of the Ratliff-Rush filtration in the above sense, see Theorem \ref{conseq of behav of RREF}. We observe that the Ratliff-Rush filtration of $I$ behaves well mod a superficial sequence $x_1,\ldots,x_{d-2}$ is weaker than  $\depth G_I(A)\geq d-1$ as evident by Example \ref{ex1.2}. In Theorem \ref{rrf}, we show that for an integrally closed ideal $I$, if $e_2(I)=e_1(I)-e_0(I)+\ell(A/I)$ and $e_i(I)=0$ for $3\leq i \leq d$  then the Ratliff-Rush filtration of $I$ behaves well modulo a superficial sequence of length $d-1.$ This result is analogous to \cite[Theorem  6.2]{Put2}.

Now suppose $d\geq 3$ and $\depth G(I)\geq d-3.$ In \cite[Theorem 4.1]{ms}, it was proved that for an $\m$-primary ideal $I$, 
\begin{equation}\label{ms bound}
    r_J(I)\leq e_1(I)-e_0(I)+\ell(A/I)+1+(e_2(I)-1)e_2(I)-e_3(I).
\end{equation}
Further, in \cite[Theorem 5.6]{sy}, the authors proved that for an an $\m$-primary integrally closed ideal $I$, 
\begin{equation}\label{sy bound}
r_J(I)\leq e_1(I)-e_0(I)+\ell(A/I)+1+e_2(I)(e_2(I)-e_1(I)+e_0(I)-\ell(A/I))-e_3(I).
\end{equation}
We refer to the value $(e_2(I)-1)e_2(I)-e_3(I)$ or $e_2(I)(e_2(I)-e_1(I)+e_0(I)-\ell(A/I))-e_3(I)$ for integrally closed $I$, as the error term. Clearly, when the error term is zero, Rossi's bound holds in dimension three. We prove the following theorem as an extension of the above observation.  
 \begin{theorem}\label{theorem3}
Let $(A, \mathfrak{m})$ be a Cohen-Macaulay local ring of dimension $d \geq 3$, $I$ an $\mathfrak{m}$-primary ideal. If either of the following two conditions holds:
 \renewcommand{\labelenumi}{(\roman{enumi})}
\begin{enumerate}
    \item $e_3(I) = e_2(I)(e_2(I) - 1)$,
    \item $I$ is integrally closed and $e_3(I) = e_2(I)(e_2(I) - e_1(I) + e_0(I) + \ell(A/I))$,
\end{enumerate}
and $e_k(I) = 0$ for $4 \leq k \leq d$. Then, $$r_J(I)\leq e_1(I)-e_0(I)+\ell(A/I)+1.$$

 \end{theorem}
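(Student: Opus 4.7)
The plan is to reduce Theorem \ref{theorem3} to an application of Theorem \ref{red-bd}: once we know that the Ratliff-Rush filtration of $I$ behaves well modulo some superficial sequence of length $d-2$, the conclusion $r_J(I) \leq e_1(I) - e_0(I) + \ell(A/I) + 1$ is immediate. Thus the entire task becomes establishing this good behaviour from the boundary hypothesis and the vanishing $e_k(I) = 0$ for $4 \leq k \leq d$.

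The natural route, as the abstract indicates, is first to prove that the associated graded ring $G_{\wt{\mathcal{I}}}(A)$ of the Ratliff-Rush filtration is almost Cohen-Macaulay, i.e.\ $\depth G_{\wt{\mathcal{I}}}(A) \geq d-1$. Once this is available, an argument along the lines of \cite{Put2} shows that $\wt{\mathcal{I}}$ behaves well modulo any sufficiently general superficial sequence of length up to $d-1$, in particular of length $d-2$, so that Theorem \ref{red-bd} applies and delivers Rossi's bound.

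The almost Cohen-Macaulay assertion itself is to be proved by induction on $d$, with $d=3$ as the base case. In dimension three the vanishing hypothesis is vacuous; case (i) then combines \eqref{ms bound} (whose depth hypothesis $\depth G_I(A) \geq d-3 = 0$ is automatic) with the assumption $(e_2(I) - 1) e_2(I) - e_3(I) = 0$ to yield $r_J(I) \leq e_1(I) - e_0(I) + \ell(A/I) + 1$, while case (ii) uses \eqref{sy bound} analogously. This numerical equality must then be upgraded to the depth statement, which I would do via Huckaba--Marley-type length identities comparing $\sum_{n \geq 0} \ell\bigl(\wt{I^{n+1}}/J \wt{I^n}\bigr)$ with the Hilbert coefficients. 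For the inductive step in dimension $d \geq 4$, take a general superficial element $x \in I$; the vanishing $e_d(I) = 0$ and a Sally-type transfer of Hilbert coefficients allow the boundary equality and the higher vanishing to descend to the Ratliff-Rush filtration on $A/(x)$, after which the inductive hypothesis together with the exact sequence induced by multiplication by $x^*$ on $G_{\wt{\mathcal{I}}}(A)$ lifts the depth estimate back to $A$.

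The main obstacle is the base case: translating the numerical equality in the reduction-number bound into the structural conclusion $\depth G_{\wt{\mathcal{I}}}(A) \geq 2$ is not formal, and requires identifying the boundary case with the vanishing of a specific local cohomology module of $G_{\wt{\mathcal{I}}}(A)$, most likely through the Sally module or through a refinement of the Huckaba--Marley inequalities that sharpen \eqref{ms bound} and \eqref{sy bound}. A secondary but delicate point in the inductive step is verifying that $\wt{I^n} \cdot A/(x) = \wt{(I A/(x))^n}$ under the boundary hypothesis, since this is itself the good-behaviour condition one ultimately wants; care is needed to break this apparent circularity by deducing it from the depth statement rather than assuming it.
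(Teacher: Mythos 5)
Your overall skeleton --- establish $\depth G_{\wt{\mathcal{I}}}(A)\geq d-1$ by induction on $d$ with base case $d=3$, then conclude Rossi's bound --- matches the paper, but two of your steps are genuine gaps. First, the final reduction to Theorem \ref{red-bd} relies on the implication ``$\depth G_{\wt{\mathcal{I}}}(A)\geq d-1$ $\Rightarrow$ the Ratliff--Rush filtration of $I$ behaves well modulo a superficial sequence of length $d-2$.'' This is not established anywhere: good behaviour is characterized (Theorem \ref{4.5pu2}) by the vanishing of $H^i_{\mathcal{M}}(L^I(A))$ for $1\leq i\leq d-2$, a condition on $L^I(A)$ rather than on $G_{\wt{\mathcal{I}}}(A)$, and the implication the paper proves (Remark \ref{rmk1}(ii)) goes in the opposite direction. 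The detour is also unnecessary: the paper's proof of Theorem \ref{thm-red} applies Rossi's bound \cite[Theorem 1.3]{r} directly, using that $\depth G_{\mathcal{F}}(A)\geq d-1$ forces $e_1(I)=e_1(\wt{\mathcal{I}})=\sum_{n\geq 0}\ell\bigl(\wt{I^{n+1}}/J\wt{I^n}\bigr)$, so that $r_J(I)\leq \sum_{n\geq 0}\ell\bigl(\wt{I^{n+1}}/J\wt{I^n}\bigr)-e_0(I)+\ell(A/I)+1=e_1(I)-e_0(I)+\ell(A/I)+1$.

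Second, in the base case $d=3$ you propose to first deduce the numerical bound on $r_J(I)$ from \eqref{ms bound}/\eqref{sy bound} and then ``upgrade'' to $\depth G_{\wt{\mathcal{I}}}(A)\geq 2$; you correctly flag this upgrade as the main obstacle, but it is precisely the content you would need to supply, and the route through reduction numbers is not how it is done. The paper (Theorem \ref{e3_m-primary}) proves the depth statement directly from the equality: passing to $A'=A/(x)$ and $\mathcal{F}'=\{(\wt{I_n}+(x))/(x)\}$, Blancafort's formula gives $e_3(\wt{\mathcal{F}'})=\sum_{n\geq 0}\ell\bigl((H^2_{\mathcal{R}_+}(\mathcal{R}(\wt{\mathcal{F}'})))_{n+1}\bigr)\leq a_2\cdot e_2(\mathcal{I})\leq (e_2(\mathcal{I})-1)e_2(\mathcal{I})$, while $e_3(\wt{\mathcal{F}'})-e_3(\mathcal{F}')=\sum_{n\geq 0}\ell\bigl(\wt{\mathcal{F}'_{n+1}}/\mathcal{F}'_{n+1}\bigr)\geq 0$; equality collapses this chain, forces $\wt{\mathcal{F}'}=\mathcal{F}'$, hence $\depth G_{\mathcal{F}'}(A')\geq 1$ and Sally's machine gives $\depth G_{\mathcal{F}}(A)\geq 2$. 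Note also that this mechanism resolves the circularity you worry about in the inductive step: the paper never needs $\wt{I^n}A'=\wt{I^nA'}$, because it works throughout with the filtration $\mathcal{F}'$ on $A'$ and its own Ratliff--Rush closure, not with the image of the Ratliff--Rush closure computed upstairs.
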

The above theorem indicates that in higher dimensions, aiming to find a bound which also involves higher Hilbert coefficients could be a useful approach for solving the conjecture on Rossi's bound. In the further theorems, we show that the error terms of \eqref{ms bound} and \eqref{sy bound} are indeed non-negative and the vanishing of error terms along with the $e_i(I)$ for $i\geq 4$ has interesting consequences. 
For an $I$-admissible filtration $\mathcal{I}$, we write $G_{\wt{\mathcal{I}}}(A)=\displaystyle{\bigoplus_{n\geq 0}}\frac{\wt{I_n}}{\wt{I_{n+1}}}$ for the associated graded ring of
$\wt{\mathcal{I}}.$

 \begin{theorem}\label{theorem1}
 Let $(A,\m)$ be a Cohen-Macaulay local ring of dimension $d\geq 3$ and $I$ an $\m$- primary ideal. Suppose $\mathcal{I}=\{I_n\}_{n\geq 0}$ is an $I$-admissible filtration. Then 
    \begin{equation}\label{e3 for filtration 1}
     e_3(\mathcal{I})\leq (e_2(\mathcal{I})-1)e_2(\mathcal{I}).
    \end{equation}    
    Furthermore, suppose $\mathcal{I}=\{I^n\}_{n\geq 0}$ and $I$ is an integrally closed ideal then,
    \begin{equation}\label{e3 integrally closed 1}
    e_3(I)\leq (e_2(I)-e_1(I)+e_0(I)-\ell(A/I))e_2(I).
    \end{equation}
    
    Suppose $d=3$ and equality holds in \eqref{e3 for filtration 1}  or \eqref{e3 integrally closed 1} for respective $\I$ then  $\depth G_{\wt{\mathcal{I}}}(A)\geq 2.$
 Suppose $d\geq 4$ and in addition to the above equalities for the respective $\mathcal{I}$, assume $e_k(\mathcal{I})=0$ for $4\leq k \leq d-1$. Then,
  \renewcommand{\labelenumi}{(\roman{enumi})}
    \begin{enumerate}
        \item $(-1)^de_d(\mathcal{I})\geq 0$ and 
        \item $e_d(\mathcal{I})=0$ implies $\depth G_{\wt{\mathcal{I}}}(A)\geq d-1.$
    \end{enumerate}
    \end{theorem}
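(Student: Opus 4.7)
The plan is to establish the inequalities \eqref{e3 for filtration 1} and \eqref{e3 integrally closed 1} first in dimension three, then propagate the equality case upward by induction on $d$, peeling off one superficial element at a time. I would begin by replacing $\mathcal{I}$ with its Ratliff-Rush filtration $\wt{\mathcal{I}}$. Since $\wt{I_n} = I_n$ for $n \gg 0$, the two filtrations share the Hilbert polynomial and hence all Hilbert coefficients $e_i$. The advantage is that $\depth G_{\wt{\mathcal{I}}}(A) \geq 1$ holds automatically, so one may run superficial-element arguments on $\wt{\mathcal{I}}$ with no further depth hypothesis.

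For the base case $d = 3$, pick a superficial element $x$ for $\wt{\mathcal{I}}$ and let $A' = A/(x)$ with filtration $\mathcal{I}' = \{\wt{I_n}A'\}$. The lower coefficients $e_0, e_1, e_2$ are preserved because $\depth G_{\wt{\mathcal{I}}}(A)\geq 1$. In dimension two, Rossi's bound \eqref{rossi bound} caps $r_{J'}(\mathcal{I}')$, and the Huckaba-Marley formula gives $e_2(\mathcal{I}') = \sum_{n \geq 1} n\,\lambda(\wt{I_{n+1}}A'/J'\wt{I_n}A')$. I would pull this back to $A$ via the four-term exact sequence
\[0 \longrightarrow (\wt{I_{n+1}}:x)/\wt{I_n} \longrightarrow A/\wt{I_n} \xrightarrow{x} A/\wt{I_{n+1}} \longrightarrow A'/\wt{I_{n+1}}A' \longrightarrow 0,\]
identifying $e_3(\mathcal{I})$ with a second-order weighted sum of the same lengths plus a non-negative correction coming from the kernels $(\wt{I_{n+1}}:x)/\wt{I_n}$. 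Comparing the two weighted sums by a binomial inequality should give the quadratic bound $e_3 \leq e_2(e_2-1)$; the integrally closed refinement uses in addition Itoh's inequality $e_2(I) \geq e_1(I) - e_0(I) + \ell(A/I)$ to sharpen the leading factor to $e_2 - e_1 + e_0 - \ell(A/I)$.

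Equality in \eqref{e3 for filtration 1} or \eqref{e3 integrally closed 1} forces every intermediate inequality above to be sharp; in particular $(\wt{I_{n+1}}:x) = \wt{I_n}$ for all $n$, and the Valabrega-Valla criterion then upgrades the depth to $\depth G_{\wt{\mathcal{I}}}(A) \geq 2$, handling $d=3$. For $d \geq 4$, choose a superficial element $x$ for $\wt{\mathcal{I}}$ and again pass to $A'$. The Hilbert coefficients of $\mathcal{I}'$ coincide with those of $\wt{\mathcal{I}}$ through degree $d-2$, so the vanishing $e_k(\mathcal{I}) = 0$ for $4 \leq k \leq d-1$ and the equality hypothesis descend cleanly to $\mathcal{I}'$. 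An induction on $d$, combined with a length-difference identity relating $e_d(\mathcal{I})$ to $e_{d-1}(\mathcal{I}')$ and the lengths $\lambda((\wt{I_{n+1}}:x)/\wt{I_n})$, yields (i); and $e_d(\mathcal{I}) = 0$ together with the inductive depth estimate on $A'$ gives (ii) via Valabrega-Valla.

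The main obstacle is the dimension three base case: extracting the sharp quadratic bound from the length calculation, with explicit control over \emph{when} each intermediate step is an equality. The quadratic shape of $e_2(e_2-1)$ suggests that direct telescoping is insufficient and that a Sally-module style exact sequence (or a convexity bound on the sequence $\lambda(\wt{I_{n+1}}A'/J'\wt{I_n}A')$) will be needed to pin down the inequality and the equality/depth statement simultaneously.
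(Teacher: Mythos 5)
Your high-level skeleton matches the paper's: pass to the Ratliff--Rush filtration to gain $\depth G_{\wt{\mathcal I}}(A)\geq 1$, go modulo a superficial element with no Singh correction, and reduce to a dimension-two statement. But the central quantitative step --- actually deriving $e_3\leq e_2(e_2-1)$ --- is not established, and you flag this yourself in your last paragraph. The specific mechanism you propose does not close as written. First, the weighted-sum (Huckaba--Marley type) expression for $e_3$ of the quotient filtration requires $\depth$ of the associated graded ring of the \emph{Ratliff--Rush closure of the quotient filtration} to be $\geq 1$; the quotient $\wt{\mathcal I}A'$ need not itself be Ratliff--Rush closed in $A'$, and handling this discrepancy is exactly where the paper invokes Puthenpurakal's difference formula $e_3(\wt{\mathcal F'})-e_3(\mathcal F')=\sum_{n}\ell\bigl(\wt{\mathcal F'_{n+1}}/\mathcal F'_{n+1}\bigr)\geq 0$, which your sketch omits. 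Second, Rossi's bound $r\leq e_1-e_0+\ell(A/I)+1$ is the wrong input: to get a factor of $e_2-1$ one needs a reduction-number bound \emph{in terms of $e_2$}, namely $\wt r(\mathcal F')\leq e_2+1$ (from \cite[Theorem 2.2]{ms}), and for the integrally closed case $\wt r(\mathcal F')\leq e_2-e_1+e_0-\ell(A/I)+2$ (from \cite[Lemma 3.8]{sy}); neither follows from Rossi's bound plus Itoh's inequality. The paper then avoids your ``binomial comparison of weighted sums'' altogether: it writes $e_3(\wt{\mathcal F'})=\sum_{n\geq 0}\ell\bigl((H^2_{\mathcal R_+}(\mathcal R(\wt{\mathcal F'})))_{n+1}\bigr)$ via Blancafort's formula, uses monotonicity of these lengths together with $\ell\bigl((H^2_{\mathcal R_+})_0\bigr)=e_2$, and bounds the number of nonzero terms by $a_2(G_{\wt{\mathcal F'}}(A'))=\wt r(\mathcal F')-2$.

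Your treatment of the equality case is also off target: the condition $(\wt{I_{n+1}}:x)=\wt{I_n}$ holds \emph{automatically} once you work with $\wt{\mathcal I}$ (that is what $\depth G_{\wt{\mathcal I}}(A)\geq 1$ means), so it carries no information. What equality actually forces is $e_3(\mathcal F')=e_3(\wt{\mathcal F'})$, hence $\wt{\mathcal F'_{n+1}}=\mathcal F'_{n+1}$ for all $n$ (good behaviour of the Ratliff--Rush filtration modulo $x$), which gives $\depth G_{\mathcal F'}(A')\geq 1$ and then $\depth G_{\wt{\mathcal I}}(A)\geq 2$ by Sally's machine. Similarly, for $d\geq 4$ the relevant identity relates $e_d(\mathcal I)$ to $e_d$ (not $e_{d-1}$) of the quotient, and the sign statement $(-1)^de_d(\mathcal I)\geq 0$ requires a parity-dependent argument using the cohomological expression of $e_d(\wt{\mathcal F'})$ and the vanishing $\ell\bigl((H^{d-1}_{\mathcal R_+})_0\bigr)=e_{d-1}(\mathcal I)=0$, which your sketch does not supply.
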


In \cite[Theorem 3.12]{cpr}, Corso, Polini and Rossi   proved that if $I$ is normal and $e_2(I)=e_1(I)-e_0(I)+\ell(A/I)$, then $e_3(I)=0$. We recover this result as for normal ideals $e_3(I)\geq 0.$ Theorem \ref{theorem1} generalizes the fact
$e_2(I)\leq 1 \implies e_3(I)\leq 0$ proved in \cite[Proposition 6.4]{Put2} and \cite[Proposition 2.6]{mf2}.

It is natural to ask if Rossi's bound in \eqref{rossi bound} holds true for an $I$-admissible filtration. It is largely unknown even in dimension two. For the reduction number $\wt{r}_J(I)$  of Ratliff-Rush filtration $\{\wt{I^n}\}_{n\geq 0}$, Rossi and Swanson \cite[Proposition 4.5]{rs} proved that $\wt{r}_J(I)\leq r_J(I)$ in a two-dimensional Cohen-Macaulay local ring   and asked whether it is true for $d\geq 3$, \cite[Question 4.6]{rs}? Mandal and the first author in \cite[Proposition 2.1]{ms} proved that $\wt{r}_J(\mathcal{I})\leq r_J(\mathcal{I})$ for any $I$-admissible filtration $\mathcal{I}$ when $d=2$. In  Theorem \ref{theorem4}, we extend it in higher dimension.   In the same paper \cite[Section 4]{rs}, Rossi and Swanson asked whether $\rho(I)\leq r_J(I)$  where   $\rho(I)$  is the stability index defined below: 
$$\rho (I)= \min \{ n\in \mathbb{N}:\wt{I_k}=I_k \;\;\text{for all}\;\; k\geq n\}.
$$
Rossi, Trung and Trung \cite[Example 4.7]{rtt} gave a negative answer to this question. In this paper, we prove the following result.
 \begin{theorem}\label{theorem4}
 Let $(A, \mathfrak{m})$ be a Cohen-Macaulay local ring of dimension $d \geq 3$, $I$ an $\mathfrak{m}$-primary ideal and $\mathcal{I}$ an $I$-admissible filtration. If either of the following two conditions holds:
  \renewcommand{\labelenumi}{(\roman{enumi})}
\begin{enumerate}
    \item $e_3(\mathcal{I}) = e_2(\mathcal{I})(e_2(\mathcal{I}) - 1)$,
    \item $\mathcal{I}=\{I^n\}_{n\geq 0}$, $I$ is integrally closed and $e_3(I) = e_2(I)(e_2(I) - e_1(I) + e_0(I) + \ell(A/I))$,
\end{enumerate}
and $e_k(\mathcal{I}) = 0$ for $4 \leq k \leq d$.
Then the following holds:
 \renewcommand{\labelenumi}{(\roman{enumi})}
 \begin{enumerate}
        \item$\wt{r}_J(\mathcal{I})\leq r_J(\mathcal{I}).$
        \item  If $\wt{I_{r}}=I_{r}$ for some $r \geq r_J(\mathcal{I})$, then $\rho(\mathcal{I})\leq r.$ Furthermore, 
 \begin{eqnarray*}
            \rho(\mathcal{I})\leq \begin{cases}
                r_J(\mathcal{I}) &\text{ if } \wt{I_{r_J(\mathcal{I})}}=I_{r_J(I)}\\
                r_J(\mathcal{I})+(-1)^{d+1}(e_{d+1}(\mathcal{I})-e_{d+1}(\wt{\mathcal{I}})) &\text{ otherwise. } 
            \end{cases}
        \end{eqnarray*}
        
    \end{enumerate}
\end{theorem}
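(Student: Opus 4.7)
The plan is to exploit the almost Cohen--Macaulay property $\depth G_{\wt{\mathcal{I}}}(A)\geq d-1$ granted by Theorem \ref{theorem1} (which applies since the hypothesis $e_k(\mathcal{I})=0$ for $4\leq k\leq d$ is strictly stronger than the one in Theorem \ref{theorem1}, forcing in particular $e_d(\mathcal{I})=0$), reduce part (i) to the known two-dimensional statement, and then derive part (ii) by an elementary length count.

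For part (i), choose a minimal reduction $J=(x_1,\ldots,x_d)$ of $\mathcal{I}$ with $x_1,\ldots,x_d$ a superficial sequence. The depth bound implies that $x_1^*,\ldots,x_{d-2}^*$ is a regular sequence on $G_{\wt{\mathcal{I}}}(A)$, so setting $A'=A/(x_1,\ldots,x_{d-2})$, the Ratliff--Rush filtration descends well: $\wt{I_nA'}=\wt{I_n}A'$ for all $n\geq 0$, and consequently $r_J(\mathcal{I})=r_{\bar J}(\mathcal{I}A')$ and $\wt{r}_J(\mathcal{I})=\wt{r}_{\bar J}(\mathcal{I}A')$. Since $A'$ is a two-dimensional Cohen--Macaulay local ring, invoking \cite[Proposition 2.1]{ms} yields $\wt{r}_{\bar J}(\mathcal{I}A')\leq r_{\bar J}(\mathcal{I}A')$, which gives (i).

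For part (ii), assume first that $\wt{I_r}=I_r$ for some $r\geq r_J(\mathcal{I})$. By (i) we have $\wt{r}_J(\mathcal{I})\leq r_J(\mathcal{I})$, so both $I_{k+1}=JI_k$ and $\wt{I_{k+1}}=J\wt{I_k}$ hold for every $k\geq r_J(\mathcal{I})$. A one-line induction on $k\geq r$ then propagates the equality $\wt{I_k}=I_k$, proving $\rho(\mathcal{I})\leq r$. The first case of the furthermore (namely $\wt{I_{r_J(\mathcal{I})}}=I_{r_J(\mathcal{I})}$) is immediate by taking $r=r_J(\mathcal{I})$. For the remaining case, since $\wt{I_n}=I_n$ for $n\gg 0$ we have $P_{\mathcal{I}}=P_{\wt{\mathcal{I}}}$, and hence
\[
(-1)^{d+1}\bigl(e_{d+1}(\mathcal{I})-e_{d+1}(\wt{\mathcal{I}})\bigr)=\sum_{n\geq 0}\ell(\wt{I_n}/I_n).
\]
Writing $\rho:=\rho(\mathcal{I})$, the first part of (ii) forces $\wt{I_n}\neq I_n$ for every $n\in\{r_J(\mathcal{I}),\ldots,\rho-1\}$, so $\ell(\wt{I_n}/I_n)\geq 1$ for each such $n$. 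Summing gives $\rho-r_J(\mathcal{I})\leq\sum_{n\geq 0}\ell(\wt{I_n}/I_n)$, which is exactly the stated bound.

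The principal technical obstacle is the descent assertion $\wt{I_nA'}=\wt{I_n}A'$ used in part (i): while a clean consequence of regularity of $x_i^*$ on $G_{\wt{\mathcal{I}}}(A)$ in the $I$-adic setting (Puthenpurakal's "good behaviour of the Ratliff--Rush filtration"), one must check it for an arbitrary $I$-admissible filtration. This should be handled by iterating the standard short exact sequence $0\to \wt{I_n}/\wt{I_{n+1}}\xrightarrow{x_i^*}\wt{I_{n+1}}/\wt{I_{n+2}}$ together with the consequences summarized in Theorem \ref{conseq of behav of RREF}; everything else in the argument is bookkeeping once this descent is in place.
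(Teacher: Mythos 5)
Your proposal is correct and follows essentially the same route as the paper: part (i) uses $\depth G_{\wt{\mathcal{I}}}(A)\geq d-1$ (from Theorems \ref{signature of ed} and \ref{sig ed inte}) to descend to dimension two and invoke \cite[Proposition 2.1]{ms}, and part (ii) is the same induction plus length count against $(-1)^{d+1}(e_{d+1}(\mathcal{I})-e_{d+1}(\wt{\mathcal{I}}))$. One small remark: you assert the equality $r_J(\mathcal{I})=r_{\bar J}(\mathcal{I}A')$, which would require control of $G_{\mathcal{I}}(A)$ rather than $G_{\wt{\mathcal{I}}}(A)$; the paper (and your argument) only needs the always-valid inequality $r_{\bar J}(\mathcal{I}A')\leq r_J(\mathcal{I})$ together with $\wt{r}_J(\mathcal{I})=\wt{r}_{\bar J}(\mathcal{I}A')$, so the conclusion is unaffected.
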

 As mentioned earlier, the vanishing of the error terms and the higher Hilbert coefficients has interesting consequences. The next theorem provides a partial answer of  \cite[Question 4.4]{nq} related to the
 Castelnuovo-Mumford regularity of $G(I)$.
\begin{theorem}\label{theorem5}
Let the hypothesis be the same as in Theorem \ref{theorem3} and 
$\wt{I^{r_J(I)}}=I^{r_J(I)}$. Then $\reg G_I(A)=r_J(I).$
\end{theorem}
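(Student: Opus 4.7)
I will sandwich $\reg G_I(A)$ between $r_J(I)$ on both sides, using the almost Cohen-Macaulay structure of the Ratliff-Rush associated graded ring $G_{\wt{\mathcal{I}}}(A)$ (guaranteed by Theorem \ref{theorem1}) to control the comparison with $G_I(A)$.

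For the lower bound, I choose a minimal reduction $J=(x_1,\ldots,x_d)$ generated by a superficial sequence, so that the initial forms $J^{\ast}=(x_1^{\ast},\ldots,x_d^{\ast})\subset [G_I(A)]_1$ form a filter-regular sequence on $G_I(A)$. The Artinian quotient $G_I(A)/J^{\ast}G_I(A)$ has top nonvanishing degree exactly $r_J(I)$ by the definition of the reduction number, and since passing to the quotient by a filter-regular linear form cannot increase the Castelnuovo-Mumford regularity, this gives
\[
\reg G_I(A)\geq \reg\bigl(G_I(A)/J^{\ast}G_I(A)\bigr)=r_J(I).
\]

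For the upper bound, Theorem \ref{theorem1} yields $\depth G_{\wt{\mathcal{I}}}(A)\geq d-1$, so $H^i_{\M}(G_{\wt{\mathcal{I}}}(A))=0$ for $0\leq i\leq d-2$; in particular $H^0_{\M}=H^1_{\M}=0$ since $d\geq 3$. Theorem \ref{theorem4}(ii) combined with $\wt{I^{r_J(I)}}=I^{r_J(I)}$ gives $\rho(I)\leq r_J(I)$, so $\wt{I^n}=I^n$ for all $n\geq r_J(I)$. The inclusions $I^n\subseteq\wt{I^n}$ induce a graded ring map $\varphi\colon G_I(A)\to G_{\wt{\mathcal{I}}}(A)$; a direct computation gives
\[
(\ker\varphi)_n=\frac{I^n\cap \wt{I^{n+1}}}{I^{n+1}}=H^0_{\M}(G_I(A))_n,\qquad (\coker\varphi)_n=\frac{\wt{I^n}}{I^n+\wt{I^{n+1}}},
\]
both vanishing for $n\geq \rho(I)$, and hence supported in degrees $\leq r_J(I)-1$. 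Splitting $\varphi$ into the two short exact sequences $0\to\ker\varphi\to G_I(A)\to\im\varphi\to 0$ and $0\to\im\varphi\to G_{\wt{\mathcal{I}}}(A)\to\coker\varphi\to 0$, then chasing the long exact sequences of local cohomology, yields $H^1_{\M}(G_I(A))\cong \coker\varphi$ and $H^i_{\M}(G_I(A))\cong H^i_{\M}(G_{\wt{\mathcal{I}}}(A))$ for all $i\geq 2$. Bounding $a$-invariants degree-by-degree gives
\[
\reg G_I(A)\leq \max\bigl\{r_J(I),\; \reg G_{\wt{\mathcal{I}}}(A)\bigr\}.
\]

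To close the argument, I need $\reg G_{\wt{\mathcal{I}}}(A)\leq r_J(I)$. First, $J$ remains a minimal reduction of $\wt{\mathcal{I}}$ since $J\wt{I^n}=\wt{I^{n+1}}$ for $n\geq\rho(I)-1$. Then, because $G_{\wt{\mathcal{I}}}(A)$ is almost Cohen-Macaulay, the standard equality between the Castelnuovo-Mumford regularity and the reduction number for an almost Cohen-Macaulay associated graded ring yields $\reg G_{\wt{\mathcal{I}}}(A)=\wt{r}_J(\mathcal{I})$, which is $\leq r_J(I)$ by Theorem \ref{theorem4}(i). Together with the lower bound this forces $\reg G_I(A)=r_J(I)$. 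The principal technical obstacle is justifying the equality $\reg G_{\wt{\mathcal{I}}}(A)=\wt{r}_J(\mathcal{I})$: one must show that the contribution of $H^{d-1}_{\M}(G_{\wt{\mathcal{I}}}(A))$ does not push the regularity beyond $\wt{r}_J(\mathcal{I})$, which requires a careful analysis of the $a$-invariants $a_{d-1}$ and $a_d$ via the Hilbert series and Serre's formula, leveraging the vanishings $e_k(\mathcal{I})=0$ for $4\leq k\leq d$.
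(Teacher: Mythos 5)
Your argument is sound in outline but follows a genuinely different route from the paper. The paper's proof is a three-line reduction: from Theorems \ref{signature of ed}/\ref{sig ed inte} it gets $\depth G_{\wt{\mathcal I}}(A)\geq d-1$, uses Sally's machine to conclude that the Ratliff-Rush filtration is stable modulo each $A_j=A/(x_1,\ldots,x_{j-1})$, deduces $\wt{I^rA_j}=I^rA_j$ from the hypothesis $\wt{I^r}=I^r$, and then simply invokes \cite[Theorem 4.3]{nq}. You instead re-derive the substance of that citation: the lower bound $r_J(I)\leq\reg G_I(A)$ via the filter-regular Artinian quotient, and the upper bound by comparing $G_I(A)$ with $G_{\wt{\mathcal I}}(A)$ through the natural map $\varphi$, identifying $\ker\varphi$ and $\coker\varphi$ with $H^0_{\M}$ and $H^1_{\M}$ of $G_I(A)$ and matching the higher local cohomologies. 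All of that is correct (your use of Theorem \ref{theorem4}(ii) to force $\rho(I)\leq r_J(I)$, so that $\ker\varphi$ and $\coker\varphi$ live in degrees $\leq r_J(I)-1$, is exactly the right input). What your approach buys is a self-contained proof that makes the mechanism visible; what the paper's buys is brevity.

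The one place you must tighten is the step you yourself flag: $\reg G_{\wt{\mathcal I}}(A)=\wt{r}_J(\mathcal I)$. As written this is asserted as "standard'' and simultaneously described as an unresolved obstacle, and your proposed repair (Hilbert series plus the vanishings $e_k(\mathcal I)=0$ for $4\leq k\leq d$) points in the wrong direction: no further coefficient vanishing is needed. The equality is a consequence of $\depth G_{\wt{\mathcal I}}(A)\geq d-1$ alone, which Theorems \ref{signature of ed} and \ref{sig ed inte} already give you. Choose superficial elements $x_1,\ldots,x_{d-1}$ whose initial forms are a regular sequence on $G_{\wt{\mathcal I}}(A)$; quotienting by a regular linear form preserves both the regularity and the reduction number, so one reduces to a one-dimensional filtration $\{\bar I_n\}$ with reduction $(\bar x)$ and reduction number $s=\wt{r}_J(\mathcal I)$. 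There $a_0\leq s-1$ because $z\bar x^{\,k}\in \bar I_{n+k+1}=\bar x^{\,k+1}\bar I_n$ forces $z\in\bar x\bar I_n=\bar I_{n+1}$ for $n\geq s$, and Serre's formula gives $\ell(\bar I_n/\bar I_{n+1})=e_0$ for $n\geq s$, whence $\ell(H^1_{\R_+}(G)_n)=\ell(H^0_{\R_+}(G)_n)=0$ and $a_1+1\leq s$. With that lemma in place (it is essentially \cite[Corollary 5.7]{tm} together with the vanishing of the lower local cohomologies), your chain $\reg G_I(A)\leq\max\{r_J(I),\reg G_{\wt{\mathcal I}}(A)\}=\max\{r_J(I),\wt{r}_J(\mathcal I)\}\leq r_J(I)$ closes correctly via Theorem \ref{theorem4}(i).
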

 


This paper is organised in five sections beginning with a brief mention of notations in Section \ref{section-2}. In Section \ref{section-3}, we discuss the consequences of good behaviour of Ratliff-Rush filtration along with  a necessary and sufficient condition for the same. We also prove Theorem   \ref{red-bd} in this section. 
Theorem \ref{theorem1} is proved in Section \ref{section-4}. The last section of the paper consists of the proofs for Theorem \ref{theorem3}, Theorem \ref{theorem4} and Theorem \ref{theorem5}. Our results are supported by examples computed using Macaulay2/CoCoA. 

\section{notation}\label{section-2}
{\bf{I.}} 
Suppose $x_1,\ldots,x_s$ is a sequence in $A$. 
We write
 $(\underline{x})$ for the ideal $(x_1,\ldots,x_s)\subseteq A$. We use the notation $I^\prime$ and $A^\prime$ for going modulo a superficial element. For a graded module $M=\mathop\bigoplus\limits_{n\geq 0}M_n$, $M_n$ denotes its $n$-th graded piece and $M(-j)$ denotes the graded module $M$ shifted to the left by degree $j.$ 

{\bf{II.}} We write $\mathcal{F}=\{\wt{I_n}\}_{n\geq 0}$ and $\mathcal{F}^\prime=\Big\{\frac{\wt{I_n}+(x_1)}{(x_1)}\Big\}_{n\geq 0}$
where $x_1\in I$ is a superficial element. The Rees ring of  $\mathcal{I}$ is  $\R(\mathcal{I})=\mathop\bigoplus\limits_{n\geq 0} I_nt^n \subseteq A[t]$ where $t$ is an indeterminate and the associated graded ring of $\mathcal{I}$ is $G_{\mathcal{I}}(A)=\mathop\bigoplus\limits_{n\geq0}I_n/I_{n+1}$. We write  $\R(I)$ and $G_I(A)$ for the Rees ring and the associated graded ring respectively when $\mathcal{I}=\{I^n\}_{n\geq 0}$. Set ${\R(\I)}_+=\mathop\bigoplus\limits_{n\geq 1}I_nt^n$ and $\mathcal{M}=\m\mathop\bigoplus {\R(\I)}_+$. We write ${G}_{{\mathcal{F}}}(A)$ for the graded ring $\displaystyle{\bigoplus_{n\geq 0}}\frac{\wt{I_n}}{\wt{I_{n+1}}}$ for filtration $\mathcal{F}=\{\wt{I_n}\}_{n\geq 0}.$ 
 

{\bf{III.}} It is well known that the Hilbert series of $\mathcal I$, defined as $H(\mathcal I,t)=\mathop\sum\limits_{n\geq 0} \ell(I_n/I_{n+1})t^n$, is a rational function, i.e, there exists a unique rational polynomial $h_{\mathcal I}(t)\in\mathbb{Q}[t]$ with $h_{\mathcal I}(1)\neq 0$ such that $$H(\mathcal I,t)=\frac{h_{\mathcal I}(t)}{(1-t)^d}.$$ For every $i\geq 0$, we have $e_i(\mathcal I)=\frac{h_{\mathcal I}^{(i)}(1)}{i!}$, where $h_{\mathcal I}^{(i)}(1)$ denotes the $i$-th formal derivative of the polynomial $h_{\mathcal I}(t)$ at $t=1$. The integers $e_i(\mathcal I)$ are called the Hilbert coefficients of  $\mathcal {I}$ and for $0\leq i\leq d$, these are same as defined earlier in the Introduction. We refer to \cite{gr} for details.
We write ${e_i}(\wt{\mathcal{I}})$ for the coefficients $e_i(\mathcal{I})$ when $\mathcal{I}=\{\widetilde{I^n}\}_{n\geq 0}.$

{\bf{IV.}} The second Hilbert function of $\I$, defined as $H^2_{\I}(n)=\displaystyle{\sum^n_{j=0}}\ell(A/I_{j+1})$ coincides with a polynomial $P_\I^2(n)$ known as the second Hilbert polynomial for all sufficiently large values of $n.$ This is a polynomial of degree $d+1$, and can be formulated as 
$$
P_\I^2(n)=\sum_{j=0}^{d+1}(-i)^je_j(\I)\binom{n+d-j+1}{d-j+1}.
$$

{\bf{V.}} Let $M$ be a graded ${\R}(I)$-module and $H^i_\mathcal{M}(M)$ denote the $i$-th local cohomology module of $M$ with respect to $\mathcal{M}$. We define $a_i(M):=\max\{n~|~H^i_{\mathcal{M}}(M)_n\neq 0\}$. The Castelnuvo-Mumford regularity of $M$ is define as 
$
\reg (M)= \max\{a_i(M)+i:0\leq i \leq \dim M\}.
$

\section{Good behaviour of the Ratliff-Rush filtration}\label{section-3}
Suppose $(A,\m)$ is a Cohen-Macaulay local ring of dimension $d\geq 3.$ Marley \cite{tm}, proved that if $\depth G_{\I}(A)\geq d-1$, then $e_i(\I)\geq 0$ for $0\leq i \leq d$ and $e_i(\I)=0$ implies $e_j(\I)=0$ for $i\leq j\leq d.$ As evident by Example \ref{ex1.2}, both the above results may hold even if $\depth G_\I(A)=0.$ In Theorem \ref{conseq of behav of RREF}, we prove that in Marley's results, the depth condition on $G_\I(A)$ can be replaced by a weaker condition on the behaviour of the Ratliff-Rush filtration of $\I.$ This condition has interesting consequences which is the main content of this section. 

We first recall that the Ratliff-Rush filtration $\widetilde{\I}$  of an $I$-admissible filtration $\I$  is defined by letting $$\widetilde{I_n}=\mathop\bigcup\limits_{k\geq 0} (I_{n+k}:I^k).$$
Suppose $x\in I$ is a superficial element for $\I$, then it is easy to see that $\widetilde{I_n}A^\prime\subseteq \widetilde{I_nA^\prime}$ for all $n\geq 0$ and equality holds for $n\gg 0$ where $A^\prime=A/(x).$ Moreover, the equality for all $n\geq 0,$ is a very interesting condition. We will discuss that it is weaker than $\depth G_\I(A)\geq 1$, yet it enforces many nice properties.  This notion, for $I$-adic filtration, was first considered in the papers of Puthenpurakal \cite{Put1}, \cite{Put2} and was further considered in  \cite{mp}, \cite{Put3}, \cite{sy}. We generalize the definition from \cite{Put1} for an $I$- admissible filtration. 

\begin{definition}\label{def1}  
Let $x_1,\ldots,x_s\in I$ be a superficial sequence for $\I$. Then, we say that 
 \renewcommand{\labelenumi}{(\roman{enumi})}
\begin{enumerate}
    \item the Ratliff-Rush filtration of $\I$ behaves well modulo $x_1$ if    $\widetilde{I_n}A^\prime= \widetilde{I_nA^\prime}$
     for all $n\geq 0$ and 
    \item the Ratliff-Rush filtration of $\I$ behaves well modulo $x_1,\ldots,x_s$ if the Ratliff-Rush filtration of $I/(x_1,\ldots,x_{i-1})$ behaves well modulo the image of $x_i$ in $A/(x_1,\ldots,x_{i-1})$ for all $1 \leq i \leq s$, where $(x_1,\ldots,x_{i-1})=(0)$ for $i=1$.
\end{enumerate}
 \end{definition}
 \begin{remark}\label{rmk1}
  \renewcommand{\labelenumi}{(\roman{enumi})}
\begin{enumerate}
    \item Suppose $\depth G_\I(A)\geq d-1$. Then $H_\mathcal{M}^i(G_\I(A))=0$ for $0\leq i\leq d-2$. Using \cite[Proposition 5.2]{Put1} and \cite[Theorem 4.5]{Put2}, we get that the Ratliff-Rush filtration of $\I$ behaves well modulo a superficial sequence of length $d-2.$ In \cite[Theorem 6.2]{Put2}, Puthenpurkal proved that in a Cohen-Macaulay local ring if $e_2(I)=\ldots=e_d(I)=0$, then the Ratliff-Rush filtration of $I$ behaves well mod a superficial sequence of length $d-1$ which is the case for the following example. However $\depth G_I(A)=0.$
\item  When the Ratliff-Rush filtration of $\I$ behaves well modulo a superficial sequence $\underline{x}=x_1,\ldots$ $,x_{s}$, then ${G}_{{\mathcal{F}}}(A)/(\underline{x}){G}_{{\mathcal{F}}}(A)\iso {G}_{{\mathcal{F}}}(A_{s})$ where $A_s=A/(x_1,\ldots,x_s)$. Since  $\depth$ $ {G}_{{\mathcal{F}}}(A_{s})\geq 1$, by Sally's machine $\depth{G}_{{\mathcal{F}}}(A)\geq s+1.$ As an immediate consequence, we have Theorem \ref{conseq of behav of RREF} which implies the generalization of Marley's results \cite[Corollary 3.15 and Corollary 3.16]{tm}. 
\end{enumerate}
\end{remark}


\begin{example}\cite[Example 3.8]{cpr}\label{ex1.2}
Let $A=Q[[x,y,z]]$ and $I=(x^2-y^2,y^2-z^2,xy,yz,xz).$ Then, the Hilbert Series of $G_I(A)$ is $$
H(I,t)=\frac{5+6t^2-4t^3+t^4}{(1-t)^3}.
$$
So $e_1(I)= 4$ and $ e_2(I)=e_3(I)=0$. By \cite[Theorem 6.2]{Put2}, the Ratliff-Rush filtration of  $I$ behaves well modulo a superficial element. However, $\depth G_I(A)=0$ as $x^2\in(I^2:I)\subseteq\widetilde{I}$ but $x^2\notin I$.  
We also note that $J=(x^2,y^2,z^2)$ is a minimal reduction of $I$ and $r_J(I)=2=e_1(I)-e_0(I)+\ell(A/I)+1.$
\end{example}

\begin{theorem} \label{conseq of behav of RREF}
Let $(A,\m)$ be a Cohen-Macaulay local ring of dimension $d\geq 3$ and $I$ an $\m-$ primary ideal. Suppose $\mathcal{I}=\{I_n\}_{n\geq 0}$ is an $I$-admissible filtration.
     If the Ratliff-Rush filtration of $\I$ behaves well modulo a superficial sequence $x_1,\ldots,x_{d-2}$, then the following holds.
    
     \renewcommand{\labelenumi}{(\roman{enumi})}
\begin{enumerate}
        \item $e_i(\mathcal{I})\geq 0$ for all $0\leq i \leq d$.
        \item $e_i(\mathcal{I})=0$ $\implies e_j(\mathcal{I})=0$ for $i \leq j \leq d.$
         \item If $I_1$ is an integrally closed ideal, then $(-1)^d(e_0(\mathcal{I})-e_1(\mathcal{I})+\ldots+(-1)^d e_d(\mathcal{I})-\ell(A/I_1))\geq 0$.
    \end{enumerate}
\end{theorem}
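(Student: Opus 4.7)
The plan is to translate the good-behavior hypothesis into a depth bound on the Ratliff-Rush associated graded ring, and then import Marley-type theorems for filtrations of high depth. By Remark \ref{rmk1}(ii), the assumption that $\widetilde{\mathcal{I}}$ behaves well modulo the superficial sequence $x_1,\ldots,x_{d-2}$ yields $\depth G_\mathcal{F}(A)\geq d-1$, where $\mathcal{F}=\widetilde{\mathcal{I}}$. Since $\widetilde{I_n}=I_n$ for $n\gg 0$, the Hilbert polynomials of $\mathcal{I}$ and $\widetilde{\mathcal{I}}$ agree, so $e_i(\mathcal{I})=e_i(\widetilde{\mathcal{I}})$ for every $0\leq i\leq d$. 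It therefore suffices to prove (i), (ii), (iii) for $\widetilde{\mathcal{I}}$.

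For (i) and (ii), I would apply the admissible-filtration analogue of Marley's theorems \cite[Corollary 3.15, 3.16]{tm}: if $\depth G_\mathcal{F}(A)\geq d-1$, then $e_i(\widetilde{\mathcal{I}})\geq 0$ for all $i$, and $e_i(\widetilde{\mathcal{I}})=0$ forces $e_j(\widetilde{\mathcal{I}})=0$ for all $j\geq i$. Marley's proofs proceed via superficial reduction together with Sally's machine, and both steps pass verbatim from the $I$-adic case to admissible filtrations. The equality $e_i(\mathcal{I})=e_i(\widetilde{\mathcal{I}})$ then transfers the conclusion back to $\mathcal{I}$.

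For (iii), the inequality rewrites as $(-1)^d[P_\mathcal{I}(1)-\ell(A/I_1)]\geq 0$, using $P_\mathcal{I}(1)=\sum_{i=0}^d(-1)^ie_i(\mathcal{I})$. Since $I_1$ is integrally closed and the Ratliff-Rush closure is contained in the integral closure, $\widetilde{I_1}=I_1$, so $\ell(A/I_1)=H_{\widetilde{\mathcal{I}}}(1)$ and the target becomes $(-1)^d[P_{\widetilde{\mathcal{I}}}(1)-H_{\widetilde{\mathcal{I}}}(1)]\geq 0$. I would induct on $d$: factor out $x_1,\ldots,x_{d-2}$ one at a time (the good-behavior hypothesis ensures the Ratliff-Rush structure and Hilbert coefficients descend), then mod out by one additional generic superficial element to land in dimension one, where Northcott's inequality $e_1\geq e_0-\ell(A/\widetilde{I_1})$ furnishes the base case. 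Climbing back up, the sign $(-1)^d$ tracks the alternating propagation of the inequality as dimension rises.

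The primary technical obstacle is in (iii): controlling how $H_{\widetilde{\mathcal{I}}}(1)$ and $P_{\widetilde{\mathcal{I}}}(1)$ transform under each superficial reduction. Under $\depth G_\mathcal{F}(A)\geq d-1$ the Hilbert coefficients descend unchanged, but the length $\ell(A/\widetilde{I_1})$ does not, and matching these behaviors so that the alternating sum aggregates with the correct sign requires a careful Sally-machine bookkeeping. The integrally closed hypothesis on $I_1$ is used precisely to identify $\ell(A/I_1)$ with $\ell(A/\widetilde{I_1})$ at the start of the induction, while the good-behavior condition $\widetilde{I_n}A'=\widetilde{I_nA'}$ is what keeps $\widetilde{I_1}$ compatible with reduction modulo each superficial element.
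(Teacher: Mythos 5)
Your reduction of the problem to the filtration $\wt{\mathcal I}$ via $e_i(\mathcal I)=e_i(\wt{\mathcal I})$ and $\depth G_{\mathcal F}(A)\geq d-1$ is exactly the paper's starting point, and for (i) and (ii) your route is essentially the same as the paper's: the paper writes $e_i(\wt{\mathcal I})=\sum_{n\geq i-1}\binom{n}{i-1}\ell\bigl(\wt{I_{n+1}}/J\wt{I_n}\bigr)$ (the Huneke--Valabrega--Valla type formula from \cite[Theorem 2.5]{rv}, valid when $\depth G_{\mathcal F}(A)\geq d-1$), from which nonnegativity and the propagation of vanishing are immediate; citing Marley's corollaries instead amounts to the same mechanism. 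Those two parts are fine.

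Part (iii) is where your argument has a genuine gap. Writing $Q_d=(-1)^d\bigl(\sum_{i=0}^{d}(-1)^ie_i(\mathcal I)-\ell(A/I_1)\bigr)$ and $Q_{d-1}$ for the corresponding quantity of the image filtration in $A'=A/(x_1)$ (where $e_i$ for $i\leq d-1$ and $\ell(A/I_1)$ are preserved), one has
\begin{equation*}
Q_d \;=\; e_d(\mathcal I)\;-\;Q_{d-1},
\end{equation*}
so knowing $Q_{d-1}\geq 0$ from the inductive hypothesis gives no control on the sign of $Q_d$: the global sign flip $(-1)^d\mapsto(-1)^{d-1}$ means the inequality does not propagate upward, and you would need a quantitative lower bound $e_d(\mathcal I)\geq Q_{d-1}$, which your sketch does not supply. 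Your closing remark that this "requires a careful Sally-machine bookkeeping" is precisely the missing step, not a routine verification. The paper avoids the induction entirely: it substitutes the formula $e_i(\wt{\mathcal I})=\sum_{n\geq i-1}\binom{n}{i-1}V_n$ with $V_n=\ell\bigl(\wt{I_{n+1}}/J\wt{I_n}\bigr)$ into the full alternating sum and collapses it combinatorially (using $\sum_{k=0}^{n}(-1)^k\binom{n}{k}=\delta_{n,0}$ and Pascal identities) to
\begin{equation*}
(-1)^d\Bigl(e_0(\mathcal I)-e_1(\mathcal I)+\cdots+(-1)^de_d(\mathcal I)-\ell(A/I_1)\Bigr)\;=\;\ell(\wt{I_1}/I_1)\cdot(-1)^{d+1}\;+\;\sum_{n\geq d-1}\binom{n-1}{d-1}V_n,
\end{equation*}
after which the integrally closed hypothesis kills the only term of ambiguous sign via $\wt{I_1}=I_1$ (your observation that Ratliff--Rush closure sits inside integral closure is used in exactly this way). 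If you want to keep a cohomological flavor instead of redoing that computation, note that (iii) is the statement $(-1)^d\bigl(P_{\wt{\mathcal I}}(1)-H_{\wt{\mathcal I}}(1)\bigr)\geq 0$, and under $\depth G_{\mathcal F}(A)\geq d-1$ the difference $P_{\wt{\mathcal I}}(n)-H_{\wt{\mathcal I}}(n)$ is $(-1)^d$ times a length by \cite[Proposition 4.4]{bl}; either way, a single closed-form identity replaces the induction, which as sketched does not close.
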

\begin{proof}(i) and (ii):  It is well known that $e_0(\mathcal{I})\geq 0.$ Suppose the Ratliff-Rush filtration  of $\I$ behaves well modulo a superficial sequence $x_1,\ldots$ $,x_{d-2}$, then  $\depth {G}_{\mathcal{F}}(A)\geq d-1$ which implies that for $1\leq i\leq d,$
        \begin{equation}\label{e}      e_i(\I)={e}_i(\wt{\mathcal{I}})=\sum_{n\geq i-1}\binom{n}{i-1}\ell\Big(\frac{\wt{I_{n+1}}}{J \wt{I_n}}\Big)\geq 0.
        \end{equation}
        See \cite[Theorem 2.5]{rv} for the above formula. If $e_i(\mathcal{I})=0$ for some $1\leq i\leq d$, then by the above formula we get that $\ell\Big(\frac{\wt{I_{n+1}}}{J \wt{I_n}}\Big)=0$ for all   $n\geq i-1$ which means $\wt{r}_J(\mathcal{I})\leq i-1$. Hence $e_j(\mathcal{I})={e_j}(\wt{\I})=\mathop\sum\limits_{n\geq j-1}\binom{n}{j-1}\ell\Big(\frac{\wt{I_{n+1}}}{J \wt{I_n}}\Big)=0$ for all $j\geq i$.  This completes the proof of (i) and (ii).        
         
         
       (iii) We set $V_n=\ell\Big(\frac{\wt{I_{n+1}}}{J\wt{I_n}}\Big).$ We have 
 \begin{align*}
&(-1)^d\big(e_0(\mathcal{I})-e_1(\mathcal{I})+\ldots+(-1)^{d-1}e_{d-1}(\mathcal{I})-\ell (A/I_1)\big)\\
&=(-1)^d\big({e}_0(\wt{\mathcal{I}})-{e}_1(\wt{\mathcal{I}})+\ldots+(-1)^{d-1}{e}_{d-1}(\wt{\mathcal{I}})-\ell (A/I_1)\big)\\
&=(-1)^d \Big(\ell(A/J)-\sum_{n\geq 0}V_n+\sum_{n\geq 1} nV_n+\cdots+(-1)^{d-1}\sum_{n\geq d-2}\binom{n}{d-2}V_n-\ell(A/I_1)\Big)\\
&=(-1)^d \Big(\ell(I_1/J)+\sum_{n\geq d-2}[-1+\binom{n}{1}+\cdots+(-1)^{d-1}\binom{n}{d-2}]V_n-\sum_{n=0}^{d-3}V_n+\sum_{n=1}^{d-3}nV_n-\\
&\cdots+(-1)^{d-3}\sum_{n=d-4}^{d-3}\binom{n}{d-4}V_n+(-1)^{d-2}\binom{n}{d-3}V_{d-3}\Big)\\
&=(-1)^{d}\Big(\ell(I_1/J)+(-1)^{d-1}\sum_{n\geq d-2}\binom{n-1}{d-2}V_n-\sum_{n=0}^{d-3}\big( \sum_{k=0}^{n}(-1)^{k}\binom{n}{k} \big)V_n\Big)\\
&= (-1)^{d}\Big(\ell(I_1/J)+(-1)^{d-1}\sum_{n\geq d-2}\binom{n-1}{d-2}V_n-V_0\Big)\\
&= (-1)^{d}\Big(-\ell(\wt{I_1}/I_1)+(-1)^{d-1}\sum_{n\geq d-1}\binom{n-1}{d-2}V_n\Big)\\
&=(-1)^{d}\Big(-\ell(\wt{I_1}/I_1)-(-1)^d\sum_{n\geq d-1}\Big(\binom{n}{d-1}-\binom{n-1}{d-1}\Big)V_n\Big)\\
&=(-1)^{d}\Big(-\ell(\wt{I_1}/I_1)-(-1)^de_d(\I)+(-1)^d\sum_{n\geq d-1}\binom{n-1}{d-1}V_n\Big).
     \end{align*}
Hence,
\begin{align}
 \begin{split}
    &(-1)^d\big(e_0(\mathcal{I})-e_1(\mathcal{I})+\ldots+(-1)^{d-1}e_{d-1}(\mathcal{I})+(-1)^de_d(\I)-\ell (A/I_1)\big)\nonumber\\
   &=(-1)^{d}\Big(-\ell(\wt{I_1}/I_1)+(-1)^d\sum_{n\geq d-1}\binom{n-1}{d-1}V_n\Big).
    \end{split}
\end{align}
Since $I_1$ is an integrally closed ideal, hence
\begin{align*}
    &(-1)^d\big(e_0(\mathcal{I})-e_1(\mathcal{I})+\ldots+(-1)^{d-1}e_{d-1}(\mathcal{I})+(-1)^de_d(\I)-\ell (A/I_1)\big)\\
    &=\sum_{n\geq d-1}\binom{n-1}{d-1}V_n\geq 0.
\end{align*}
 \end{proof}


We recover Marley's results \cite[Corollary 3.15 and Corollary 3.16]{tm}. 

\begin{corollary}
    Suppose $\depth G_\I(A)\geq d-1$. Then the following holds.
     \renewcommand{\labelenumi}{(\roman{enumi})}
     \begin{enumerate}
        \item $e_i(\mathcal{I})\geq 0$ for all $0\leq i \leq d$.
        \item $e_i(\mathcal{I})=0$ $\implies e_j(\mathcal{I})=0$ for $i \leq j \leq d.$
         \item If $I_1$ is an integrally closed ideal, then$(-1)^d(e_0(\mathcal{I})-e_1(\mathcal{I})+\ldots+(-1)^d e_d(\mathcal{I})-\ell(A/I_1))\geq 0.$ 
         
    \end{enumerate}
\end{corollary}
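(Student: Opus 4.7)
The plan is to observe that this corollary is essentially a direct reformulation of Theorem \ref{conseq of behav of RREF} once we translate the depth hypothesis into the Ratliff-Rush good-behaviour hypothesis. Nothing new needs to be computed; the whole content of the corollary is the implication ``$\depth G_{\I}(A)\geq d-1 \Rightarrow$ good behaviour of the Ratliff-Rush filtration modulo a superficial sequence of length $d-2$.''

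First I would recall the observation recorded in Remark \ref{rmk1}(i): if $\depth G_{\I}(A)\geq d-1$, then $H^i_{\mathcal{M}}(G_{\I}(A))=0$ for $0\leq i\leq d-2$, and by \cite[Proposition 5.2]{Put1} together with \cite[Theorem 4.5]{Put2}, this vanishing of local cohomology is exactly what is needed to conclude that the Ratliff-Rush filtration of $\I$ behaves well modulo some superficial sequence $x_1,\ldots,x_{d-2}$ in the sense of Definition \ref{def1}. This requires that the residue field be infinite (already a standing assumption) so that one can pick superficial elements freely.

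Having verified the hypothesis of Theorem \ref{conseq of behav of RREF}, parts (i), (ii) and (iii) of the corollary follow verbatim from the corresponding parts of that theorem. In particular, (i) and (ii) come from the formula
\[
e_i(\I)=\sum_{n\geq i-1}\binom{n}{i-1}\,\ell\!\left(\tfrac{\wt{I_{n+1}}}{J\wt{I_n}}\right)
\]
valid once $\depth G_{\mathcal{F}}(A)\geq d-1$, and (iii) follows from the telescoping computation carried out in the proof of Theorem \ref{conseq of behav of RREF}, which requires only the integrally closedness of $I_1$ after the good-behaviour hypothesis is in place.

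There is no substantive obstacle: the only point worth double-checking is that the cited results of Puthenpurakal (originally formulated for the $I$-adic filtration) extend to a general $I$-admissible filtration $\I$; but this is a routine adaptation that underlies Definition \ref{def1} and has already been used implicitly in Theorem \ref{conseq of behav of RREF}. Thus the corollary is immediate from Remark \ref{rmk1}(i) combined with Theorem \ref{conseq of behav of RREF}.
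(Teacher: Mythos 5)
Your proposal is correct and is exactly the argument the paper gives: the printed proof of the corollary simply reads ``See Remark \ref{rmk1} and Theorem \ref{conseq of behav of RREF},'' i.e.\ the depth hypothesis yields good behaviour of the Ratliff-Rush filtration modulo a superficial sequence of length $d-2$ via Remark \ref{rmk1}(i), after which all three parts follow from Theorem \ref{conseq of behav of RREF}. Your added caveat about extending Puthenpurakal's results from the $I$-adic case to admissible filtrations is a reasonable point of care but does not change the route.
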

\begin{proof}
See Remark \ref{rmk1} and Theorem \ref{conseq of behav of RREF}.
\end{proof}
Another noteworthy consequence of the good behaviour of Ratliff-Rush is the following result. 
\begin{theorem}\label{rossi's holds}
Suppose $d\geq 3.$ If the Ratliff-Rush filtration of $I$ behaves well modulo a superficial sequence $x_1,\ldots,x_{d-2}$, then the Rossi's bound for reduction number holds i.e, $r_J(I)\leq e_1(I)-e_0(I)+\ell(A/I)+1$.
\end{theorem}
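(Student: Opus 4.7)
The plan is to induct on $d \geq 3$, reducing to Rossi's two-dimensional bound \cite[Corollary 1.5]{r} after passing modulo the superficial sequence. The crucial preliminary observation is that, by Remark \ref{rmk1}(ii), the good behaviour hypothesis gives $\depth G_{\widetilde{\mathcal{I}}}(A) \geq d-1$ for $\mathcal{I}=\{I^n\}_{n\geq 0}$. Consequently the initial forms $x_1^\ast,\ldots,x_{d-2}^\ast$ constitute a regular sequence on $G_{\widetilde{\mathcal{I}}}(A)$, which will be the main leverage in what follows.

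Setting $A' = A/(x_1)$, $I' = IA'$ and $J' = JA'$, superficiality of $x_1$ provides $e_i(I) = e_i(I')$ for $i = 0,1$ and $\ell(A/I) = \ell(A'/I')$. For the base case $d = 3$ the ring $A'$ is two-dimensional Cohen-Macaulay, so Rossi's classical bound gives
\[
r_{J'}(I') \leq e_1(I) - e_0(I) + \ell(A/I) + 1.
\]
For $d \geq 4$ the same inequality holds by the inductive hypothesis applied to $I' \subset A'$, since the Ratliff-Rush filtration of $I'$ behaves well modulo $x_2,\ldots,x_{d-2}$ by Definition \ref{def1}(ii).

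It remains to lift this bound up to $A$, that is, to prove $r_J(I) \leq r_{J'}(I')$. For $n \geq r_{J'}(I')$ the equality $(I')^{n+1} = J'(I')^n$ translates to $I^{n+1} \subseteq JI^n + (x_1)$. Combining good behaviour (so that $\widetilde{I^n}A' = \widetilde{(I')^n}$) with the bound $\widetilde{r}_{J'}(I') \leq r_{J'}(I')$ supplied either by \cite[Proposition 4.5]{rs} (when $d = 3$) or by the inductive hypothesis (when $d \geq 4$), one obtains $\widetilde{I^{n+1}} + (x_1) = J\widetilde{I^n}+(x_1)$. Using $\widetilde{I^{n+1}} \cap (x_1) = x_1\widetilde{I^n}$ from the regular-sequence property and $x_1 \in J$, a brief Nakayama-type argument then forces $\widetilde{I^{n+1}} = J\widetilde{I^n}$ for every $n \geq r_{J'}(I')$, and hence $r_J(\widetilde{\mathcal{I}}) \leq r_{J'}(I')$.

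The most delicate step, and the main obstacle, is to upgrade this to a bound on $r_J(I)$ itself: one must show that the stability index $\rho(I)$ is controlled by $r_{J'}(I')$, so that $\widetilde{I^n} = I^n$ throughout the range in which the identity $\widetilde{I^{n+1}} = J\widetilde{I^n}$ is already available. I would attempt this by iterating the containment $\widetilde{I^n} \subseteq I^n + x_1\widetilde{I^{n-1}}$ --- itself a direct consequence of good behaviour mod $x_1$ together with the regularity of $x_1^\ast$ on $G_{\widetilde{\mathcal{I}}}(A)$ --- and using the eventual stabilisation $\widetilde{I^m} = I^m$ for $m \gg 0$ to trigger a descending induction on $n$ that kills the defect $\widetilde{I^n}/I^n$ in the required range. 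This finer Valabrega--Valla-style bookkeeping, particular to the Ratliff-Rush filtration, is the heart of the argument.
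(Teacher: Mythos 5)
There is a genuine gap, and you have in fact located it yourself: everything up to the bound $r_J(\widetilde{\mathcal{I}})\leq r_{J'}(I')$ is plausible, but the theorem is about $r_J(I)$ for the $I$-adic filtration, and the passage from the Ratliff--Rush reduction number back to $r_J(I)$ is exactly the step you leave as an ``attempt.'' Worse, the containment you propose to iterate, $\widetilde{I^n}\subseteq I^n+x_1\widetilde{I^{n-1}}$, is not a consequence of the hypotheses. It is equivalent (via the modular law and $\widetilde{I^{n}}\cap(x_1)=x_1\widetilde{I^{n-1}}$) to $\widetilde{I^n}A'\subseteq I^nA'$, i.e.\ to $\widetilde{I^nA'}=I^nA'$, which says that the powers of $I'$ are Ratliff--Rush closed in $A'$ --- something good behaviour does not give. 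Indeed, if that containment held for all $n$, iterating it yields $\widetilde{I^n}\subseteq I^n+(x_1^k)$ for every $k$, hence $\widetilde{I^n}=I^n$ by Krull intersection; but Example \ref{ex1.2} has good behaviour of the Ratliff--Rush filtration while $\widetilde{I}\neq I$. So the descending induction you sketch cannot be made to work as stated, and the known failure of $\rho(I)\leq r_J(I)$ in general (Rossi--Trung--Trung, cited in the paper) shows this is not a routine bookkeeping issue.

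The paper's proof sidesteps the whole difficulty. From Remark \ref{rmk1}(ii) (Sally's machine) the hypothesis gives $\depth G_{\mathcal{F}}(A)\geq d-1$ for $\mathcal{F}=\{\widetilde{I^n}\}$, and then one invokes Rossi's Theorem 1.3 of \cite{r} --- not Corollary 1.5 --- which under this depth condition bounds $r_J(I)$ itself by $\sum_{n\geq 0}\ell(\widetilde{I^{n+1}}/J\widetilde{I^n})-e_0(I)+\ell(A/I)+1$; the same depth condition identifies the sum with $e_1(\widetilde{\mathcal{I}})=e_1(I)$ via the formula in \cite[Theorem 2.5]{rv}. If you want to salvage your inductive scheme, you would need to replace the final step by an argument of this type that bounds $r_J(I)$ directly in terms of data of the Ratliff--Rush filtration, rather than trying to prove $\widetilde{I^n}=I^n$ in the relevant range.
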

\begin{proof}
Under the hypothesis, $\depth {G}_{\mathcal{F}}(A)\geq d-1$,  so using \cite[Theorem 1.3]{r}, we have $r_J(I)\leq \displaystyle{\sum_{n\geq 0}}\ell(\wt{I^{n+1}}/J\wt{I^n})-e_0(I)+\ell(A/I)+1=e_1(I)-e_0(I)+\ell(A/I)+1$. 
\end{proof}

As mentioned already, Puthepurkal \cite{Put2} proved that  $e_2(I)=\ldots=e_d(I)=0$, implies the good behaviour of the Ratliff-Rush filtration of $I$. However, for integrally closed ideals, the above vanishings are quite strong as by a result of Itoh  \cite{itoh}, $e_2(I)=0\implies $ $G_I(A)$ is Cohen-Macaulay.  We prove the following result for integrally closed ideals which provides a sufficient condition for good behaviour of Ratliff-Rush filtration. Example \ref{eg1} demonstrates that $e_2(I)=e_1(I)-e_0(I)+\ell(A/I)$ and $e_3(I)=0$ is a weaker condition than  $G_I(A)$ being Cohen-Macaulay. Let us first define 
 $$L^I(A)=\mathop\bigoplus\limits_{n\geq 0} A/I^{n+1}$$ as in \cite{Put1} and $\mathcal{R}(I)=\displaystyle{\bigoplus_{n\geq 0}}I^nt^n$, the Rees algebra of $I$. The exact sequence 
$$0\to \R(I)\to A[t]\to L^I(A)(-1)\to 0$$ defines an $\R(I)$-module structure on $L^I(A).$ Note that $L^I(A)$ is not a finitely generated $\R(I)$-module. However, the local cohomology modules $H_{\mathcal{M}}^i (L^I(A))$, with support in $\mathcal{M}=m\bigoplus \R(I)_+$,
are $*$-Artinian for $0\leq i\leq 2$ and $H^0_\mathcal{M}(L^I(A))=\displaystyle{\bigoplus_{n\geq 0}}\frac{\wt{I^{n+1}}}{I^{n+1}}$ see \cite[section 4]{Put1}. For a superficial element $x\in I$, let $B(x,I)=\displaystyle{\bigoplus_{n\geq 0}}\frac{I^{n+1}:x}{I^n}$ and consider the exact sequence of $\R(I)$ modules \begin{align}\label{LI-seq} 0 \to B(x,I) \to L^I(A)(-1)\xrightarrow{x}L^I(A)\xrightarrow{\rho}L^{I^\prime}(A^\prime)\to 0 \end{align}
which is induced by the exact sequence of $A$-module $$0\to \frac{I^{n+1}:x}{I^n}\to A/I^n\xrightarrow{x} A/I^{n+1}\xrightarrow{\rho} A^\prime/I{^\prime}^{n+1}\to 0.$$
From the sequence \eqref{LI-seq}, we get the exact sequence of local cohomology modules 
  \begin{equation}\label{first_funda_ sequence}
      \begin{aligned}
      0\to B(x,I)\to & H_{\mathcal{M}}^0(L^I(A))(-1){\longrightarrow}H_{\mathcal{M}}^0(L^I(A)){\longrightarrow}H_{\mathcal{M}}^0(L^{I^\prime}(A^\prime))\\
       &H_{\mathcal{M}}^1(L^I(A))(-1){\longrightarrow}H_{\mathcal{M}}^1(L^I(A)){\longrightarrow}H_{\mathcal{M}}^1(L^{I^\prime}(A^\prime))\\
         &\cdots\\
  &  {\longrightarrow}H_{\mathcal{M}}^{d-1}(L^I(A))(-1){\longrightarrow}H_{\mathcal{M}}^{d-1}(L^I(A)){\longrightarrow}H_{\mathcal{M}}^{d-1}(L^{I^\prime}(A^\prime))\\
  &{\longrightarrow}H_{\mathcal{M}}^{d}(L^I(A))(-1){\longrightarrow}H_{\mathcal{M}}^{d}(L^I(A)){\longrightarrow}H_{\mathcal{M}}^{d}(L^{I^\prime}(A^\prime)).    
      \end{aligned}
  \end{equation}
We recall the following result from \cite{Put2} for a quick reference. 
\begin{theorem}\cite[Theorem 4.5 ]{Put2}\label{4.5pu2}
Let $(A,\m)$ be a Cohen-Macaulay local ring of dimension $d\geq 2.$ Then the Ratliff-Rush filtration of $I$ behaves well modulo superficial sequence $x_1,\ldots,x_{s}$ if and only if $\ell(H_\mathcal{M}^i(L^I(A))=0$ for $i=1, \ldots, s. $
\end{theorem}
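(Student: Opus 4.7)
The plan is to combine the long exact sequence \eqref{first_funda_ sequence} with the standard principle that a $\ast$-Artinian graded module on which an element of the irrelevant ideal $\mathcal{M}$ acts injectively must vanish. I would proceed by induction on $s$.

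\emph{Base case $s = 1$.} First identify $H^0_{\mathcal{M}}(L^I(A))_n = \wt{I^{n+1}}/I^{n+1}$ and the analogous formula over $A^\prime = A/(x_1)$, and check that the map induced by the projection $A \to A^\prime$ sends $\wt{I^{n+1}}/I^{n+1}$ onto $\wt{I^{n+1}}A^\prime/(I^\prime)^{n+1}$, which sits inside $\wt{(I^\prime)^{n+1}}/(I^\prime)^{n+1}$. Hence the map
$$\rho_\ast : H^0_{\mathcal{M}}(L^I(A)) \longrightarrow H^0_{\mathcal{M}}(L^{I^\prime}(A^\prime))$$
is surjective in every degree if and only if $\wt{I^n}A^\prime = \wt{(I^\prime)^n}$ for all $n$, which is precisely the good-behaviour condition modulo $x_1$. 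Reading off the fragment
$$H^0_{\mathcal{M}}(L^I(A)) \xrightarrow{\rho_\ast} H^0_{\mathcal{M}}(L^{I^\prime}(A^\prime)) \longrightarrow H^1_{\mathcal{M}}(L^I(A))(-1) \xrightarrow{x_1} H^1_{\mathcal{M}}(L^I(A))$$
of \eqref{first_funda_ sequence}, surjectivity of $\rho_\ast$ is equivalent to injectivity of multiplication by $x_1$ on $H^1_{\mathcal{M}}(L^I(A))$. Since $x_1 t \in \R(I)_+ \subseteq \mathcal{M}$ and each homogeneous element of the $\ast$-Artinian module $H^1_{\mathcal{M}}(L^I(A))$ is annihilated by some power of $\mathcal{M}$, injective action of $x_1$ forces $H^1_{\mathcal{M}}(L^I(A)) = 0$. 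This settles the case $s = 1$.

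\emph{Inductive step.} By Definition \ref{def1}, good behaviour of $\I$ modulo $x_1,\dots,x_s$ decomposes as good behaviour modulo $x_1$ together with good behaviour of $I^\prime$ in $A^\prime$ modulo $x_2,\dots,x_s$. By the base case the first condition equals $H^1_{\mathcal{M}}(L^I(A)) = 0$, while by the induction hypothesis the second equals $H^i_{\mathcal{M}}(L^{I^\prime}(A^\prime)) = 0$ for $1 \leq i \leq s-1$. It then suffices to show that this joint vanishing is equivalent to $H^i_{\mathcal{M}}(L^I(A)) = 0$ for $1 \leq i \leq s$. If the latter vanishings hold, then the triple
$$H^i_{\mathcal{M}}(L^I(A)) \longrightarrow H^i_{\mathcal{M}}(L^{I^\prime}(A^\prime)) \longrightarrow H^{i+1}_{\mathcal{M}}(L^I(A))(-1)$$
from \eqref{first_funda_ sequence} immediately forces $H^i_{\mathcal{M}}(L^{I^\prime}(A^\prime)) = 0$ for $1 \leq i \leq s-1$. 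Conversely, assuming the joint vanishing, one inducts on $i$: the fragment
$$H^{i-1}_{\mathcal{M}}(L^{I^\prime}(A^\prime)) \longrightarrow H^i_{\mathcal{M}}(L^I(A))(-1) \xrightarrow{x_1} H^i_{\mathcal{M}}(L^I(A))$$
with the vanishing leftmost term, combined with the $\ast$-Artinian principle, gives $H^i_{\mathcal{M}}(L^I(A)) = 0$ for $i = 2,\dots,s$.

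The main obstacle is the image identification in the base case. Verifying that $\operatorname{im}(\rho_\ast)_n$ equals precisely $\wt{I^{n+1}}A^\prime/(I^\prime)^{n+1}$, rather than some smaller quotient, requires careful bookkeeping of how the piece $B(x_1,I) \hookrightarrow H^0_{\mathcal{M}}(L^I(A))(-1)$ is spliced into \eqref{first_funda_ sequence} and how this affects the connecting homomorphisms; one must confirm that the $B(x_1,I)$-torsion at the front of the sequence does not interfere with the reading of $\rho_\ast$ as a map of $H^0$'s. Once this image computation is pinned down, the remainder of the argument is a routine application of the long exact sequence together with the $\ast$-Artinian vanishing principle.
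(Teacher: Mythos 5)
The paper does not prove this statement: it is quoted from \cite[Theorem 4.5]{Put2} purely for later reference, so there is no in-paper proof to compare against. That said, your argument is correct and is essentially Puthenpurakal's original one — the case $s=1$ via the identification of $\operatorname{im}(\rho_\ast)_n$ with $\wt{I^{n+1}}A^\prime/(I^\prime)^{n+1}$ together with the observation that a graded $\mathcal{M}$-torsion module on which the degree-one element $x_1t$ acts injectively must vanish, followed by induction on $s$ through the long exact sequence \eqref{first_funda_ sequence}. The ``obstacle'' you flag is in fact harmless: since $H^0_{\mathcal{M}}=\Gamma_{\mathcal{M}}$ is functorial, $\rho_\ast$ on $H^0$ is literally the restriction of the natural surjections $A/I^{n+1}\to A^\prime/(I^\prime)^{n+1}$ to the torsion submodules, and the $B(x_1,I)$ term sits strictly to the left of the fragment you use, so it cannot interfere. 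The only caveat worth recording is that your inductive step applies the statement over $A^\prime$ of dimension $d-1$, so the argument (and indeed the result itself, in any meaningful form) requires $s\leq d-1$; this is the only range in which the present paper invokes the theorem.
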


In \cite[Theorem 3.12]{cpr}, authors proved that if $I$ is normal and $e_2(I)=e_1(I)-e_0(I)+\ell(A/I)$, then $G_I(A)$ is Cohen-Macaulay and $e_k(I)=0$ for $k\geq 3$. The following result can be seen as a slight generalization of the above for integrally closed ideals. 
\begin{theorem}\label{rrf}
     Let $(A,\m)$ be a Cohen-Macaulay local ring of dimension $d\geq 2$ and $I$ an $\m$- primary integrally closed ideal. Suppose  $e_2(I)=e_1(I)-e_0(I)+\ell(A/I)$ and $e_k(I)=0$ for all $3 \leq k \leq d$. Then, the following holds \renewcommand{\labelenumi}{(\roman{enumi})}.
     
     \begin{enumerate}
         \item $H_\mathcal{M}^i(L^I(A))=0$ for $i= 1,\ldots,d-1$. \label{H^i}
         \item \label{part2}  
         The Ratliff-Rush filtration of $I$ behaves well modulo a superficial sequence of length $d-1.$ 
         \item $G_I(A)$ is generalized Cohen-Macaulay, $\depth G_I(A)\in \{0,d\}$ and \label{buchs}
         $$
         \mathbb{I} (G_I(A)) = (-1)^{d+1}d\cdot e_{d+1}(I)
         $$
         where $\mathbb{I}(G_I(A))=\displaystyle{\sum_{i=0}^{d-1}}\binom{d-1}{i}\ell(H_\mathcal{M}^i(G_I(A)).$
         \item \label{part3} If $I=\m$, then $G_\m(A)$ is a quasi-Buchsbaum ring.
         \end{enumerate}
\end{theorem}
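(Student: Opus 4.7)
The plan is to establish (i) as the main technical step, then deduce (ii) immediately from Theorem \ref{4.5pu2}, and finally obtain (iii) and (iv) by studying the local cohomology of $G_I(A)$ through the short exact sequence $0 \to G_I(A) \to L^I(A) \to L^I(A)(-1) \to 0$ coming degreewise from $0\to I^n/I^{n+1}\to A/I^{n+1}\to A/I^n\to 0$.

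For (i) I would induct on $d\geq 2$. In the base case $d=2$ the vanishing hypotheses on $e_k(I)$ are vacuous and the assumption reduces to $I$ integrally closed with $e_2(I)=e_1(I)-e_0(I)+\ell(A/I)$; in dimension two this numerical equality is known, through results of Itoh and Huckaba-Marley type for integrally closed ideals, to force $G_I(A)$ to be Cohen-Macaulay, whence $H^1_{\mathcal{M}}(L^I(A))=0$ follows from the above exact sequence together with the $*$-Artinian property of $H^i_{\mathcal{M}}(L^I(A))$ for $i<d$. For the induction step with $d\geq 3$ I would pick a superficial element $x\in I$ and pass to $A'=A/(x)$, $I'=IA'$; the identities $e_i(I')=e_i(I)$ for $0\leq i\leq d-1$ transfer the numerical hypothesis, but $I'$ need not be integrally closed in $A'$, which rules out a naive induction. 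To bypass this I would feed into the long exact sequence \eqref{first_funda_ sequence} directly: the integral closure of $I$ yields $H^0_{\mathcal{M}}(L^I(A))_0=\widetilde{I}/I=0$, and combined with the Hilbert-series constraints forced by $e_k(I)=0$ for $3\leq k\leq d$, this should pin down the intermediate local cohomologies and squeeze $H^i_{\mathcal{M}}(L^I(A))=0$ for $1\leq i\leq d-1$. This bookkeeping is the main obstacle.

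Part (ii) is then immediate from Theorem \ref{4.5pu2} with $s=d-1$. For (iii), taking local cohomology of $0\to G_I(A)\to L^I(A)\to L^I(A)(-1)\to 0$ and inserting the vanishings from (i) yields $H^j_{\mathcal{M}}(G_I(A))=0$ for $2\leq j\leq d-1$, while $H^0_{\mathcal{M}}(G_I(A))$ and $H^1_{\mathcal{M}}(G_I(A))$ appear as the kernel and cokernel of the shift map on $H^0_{\mathcal{M}}(L^I(A))$; both have finite length since Ratliff-Rush stabilizes in high degrees, giving generalized Cohen-Macaulayness together with $\ell(H^0_{\mathcal{M}}(G_I(A)))=\ell(H^1_{\mathcal{M}}(G_I(A)))$ by additivity of length in the resulting four-term sequence. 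The dichotomy $\depth G_I(A)\in\{0,d\}$ follows because $\depth\geq 1$ forces the injection $\widetilde{I^{n+1}}/I^{n+1}\hookrightarrow \widetilde{I^n}/I^n$ with base case $\widetilde{I}/I=0$, hence by descending induction $H^0_{\mathcal{M}}(L^I(A))=0$, which then also kills $H^1_{\mathcal{M}}(G_I(A))$ and upgrades the depth to $d$. The identity $\mathbb{I}(G_I(A))=(-1)^{d+1}d\cdot e_{d+1}(I)$ follows by expanding $\mathbb{I}(G_I(A))=d\cdot \ell(H^0_{\mathcal{M}}(G_I(A)))$ from the binomial weights and the vanishings above, and then identifying this common length with $(-1)^{d+1}e_{d+1}(I)$ via the standard Hilbert-coefficient computation of $\sum_n(P^2_\I(n)-H^2_\I(n))$ under our hypotheses. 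Finally, for (iv), the case $I=\m$ gives extra structure at the bottom degree that allows verifying $\m G_{\m}(A)\cdot H^i_{\mathcal{M}}(G_{\m}(A))=0$ for $i<d$ through the explicit identification of these local cohomologies from (iii), promoting generalized Cohen-Macaulay to quasi-Buchsbaum.
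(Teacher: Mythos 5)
There is a genuine gap, concentrated in part (i). First, your base case is false as stated: for a merely integrally closed ideal in dimension $2$, the equality $e_2(I)=e_1(I)-e_0(I)+\ell(A/I)$ does \emph{not} force $G_I(A)$ to be Cohen--Macaulay. The Corso--Polini--Rossi result you are invoking requires $I$ to be \emph{normal}; Example \ref{eg1} of this paper with $d=2$ gives an integrally closed ideal ($\m$ itself) satisfying the equality with $\depth G_\m(A)=0$ --- indeed the theorem's own conclusion $\depth G_I(A)\in\{0,d\}$ signals that depth $0$ really occurs. The correct base case instead quotes \cite[Proposition 3.4]{Put2} to get $\widetilde{I^n}A'=\widetilde{I^nA'}$ for all $n$, whence $H^1_{\mathcal M}(L^I(A))=0$ by Theorem \ref{4.5pu2}, with no Cohen--Macaulayness of $G_I(A)$ anywhere. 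Second, in the induction step you correctly worry that $IA'$ may fail to be integrally closed, but the resolution is simply to choose the superficial element $x$ so that $I/(x)$ \emph{remains} integrally closed (as is done throughout the paper, e.g.\ in Lemma \ref{e_d-1 for integrally closed}); the induction hypothesis then kills $H^i_{\mathcal M}(L^{I'}(A'))$ for $1\le i\le d-2$ and the long exact sequence \eqref{first_funda_ sequence} gives $H^i_{\mathcal M}(L^I(A))=0$ for $2\le i\le d-1$. What you have left as ``bookkeeping'' is precisely the heart of the proof, namely $H^1_{\mathcal M}(L^I(A))=0$, and no amount of staring at $\widetilde I/I=0$ produces it. The paper's argument is a length count: good behaviour mod $d-2$ superficial elements in $A'$ gives $e_2(I')-e_1(I')+e_0(I')-\ell(A'/I')=\sum_{n\ge 2}(n-1)\,\ell\bigl(\widetilde{I'^{\,n+1}}/J'\widetilde{I'^{\,n}}\bigr)=0$, hence $e_d(\widetilde{I'})=0$ and so $e_d(I')=(-1)^d\lambda\bigl(H^0_{\mathcal M}(L^{I'}(A'))\bigr)$; comparing with Singh's formula $e_d(I')=e_d(I)+(-1)^d\lambda(B(x,I))$ and using $e_d(I)=0$ yields $\lambda\bigl(H^0_{\mathcal M}(L^{I'}(A'))\bigr)=\lambda(B(x,I))$, which forces the map $H^1_{\mathcal M}(L^I(A))(-1)\to H^1_{\mathcal M}(L^I(A))$ in \eqref{first_funda_ sequence} to be injective; a $*$-Artinian module admitting such an injective shift is zero. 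Without this step the theorem is not proved.

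Parts (ii)--(iv) of your outline are essentially the paper's route once (i) is in hand, but note one further point in (iii): the exact sequence only tells you $\ell(H^0_{\mathcal M}(G_I(A)))=\ell(H^1_{\mathcal M}(G_I(A)))$, whereas the identification with $(-1)^{d+1}e_{d+1}(I)$ requires the stronger fact $H^0_{\mathcal M}(G_I(A))=H^0_{\mathcal M}(L^I(A))$, i.e.\ that the shift map on $H^0_{\mathcal M}(L^I(A))$ is zero. This rests on the auxiliary claim $\widetilde{I^{n+1}}\subseteq I^n$ for all $n\ge 1$ (proved by induction from $\widetilde{I^2}\subseteq\widetilde I=I$ and $\widetilde{I^{n+1}}=J\widetilde{I^n}$ for $n\ge 2$), together with the vanishing $e_{d+1}(\widetilde I)=0$ coming from $\widetilde{I^{n+1}}=J\widetilde{I^n}$ for $n\ge 2$; you should make both explicit.
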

\begin{proof}
 \renewcommand{\labelenumi}{(\roman{enumi})}
\begin{enumerate}    
  \item  We apply induction on $d$. For $d=2$, by  \cite[Proposition 3.4]{Put2}, $\widetilde{I^n}A^\prime=\widetilde{I^nA^\prime}$ for all $n\geq 1$ which implies that $H_\mathcal{M}^1(L^I(A))=0$ using Theorem \ref{4.5pu2}. 

Now we assume $d\geq 3$ and that the result holds for $d-1.$ Let $x\in I$ be a superficial element such that $I/(x)$ is an integrally closed ideal. Set  $A^\prime =A/(x)$, then $\dim A^\prime=d-1$, $e_i(I)=e_i(I^\prime)$ for $0\leq i \leq d-1$ and $\ell(A/I)=\ell (A^\prime/I^\prime)$.
    It is given that $e_2(I)=e_1(I)-e_0(I)+\ell (A/I)$ and $e_k(I)=0$ for $3\leq k \leq d$. Hence, $e_2(I^\prime)=e_1(I^\prime)-e_0(I^\prime)+\ell (A^\prime/I^\prime)$ and  $e_k(I^\prime)=0$ for $3\leq k \leq d-1$. Therefore by induction hypothesis $H^i_\mathcal{M}(L^{I^\prime}(A^\prime))=0$ for $i= 1, \ldots d-2.$ So, from the exact sequence \eqref{first_funda_ sequence}, $H_\mathcal{M}^i(L^I(A)=0$ for $i= 2,\ldots, d-1.$ 
    Also, the Ratliff-Rush filtration of $I^\prime$ behaves well modulo a superficial sequence of length $d-2$, see Theorem \ref{4.5pu2}. So, as mentioned already, $\depth {G}_{\wt{I^\prime}}(A^\prime)\geq d-1$. Now we show that 
    $H_\mathcal{M}^1(L^I(A)=0.$
 Using the formula given in \eqref{e}, we get
\begin{align*}
&e_2({I}^\prime)-e_1({I}^\prime)+e_0({I}^\prime)-\ell({A}^\prime/{I}^\prime)\nonumber\\
     &={e_2}(\wt{I^\prime})-{e_1}(\wt{I^\prime})+{e_0}(\wt{I^\prime})-\ell({A}^\prime/{I}^\prime)\label{the-equality-for-f-1}\\
   &=\sum_{n\geq 1}n\ell (\widetilde{{{I}^\prime}^{n+1}}/J^\prime\widetilde{{{I}^\prime}^n})-\sum_{n\geq 0}\ell(\widetilde{{{I}^\prime}^{n+1}}/J^\prime\widetilde{{{I}^\prime}^{n}})+\ell(R^\prime/J^\prime)-\ell({R}^\prime/{I}^\prime)\\ 
 &= \sum_{n\geq 2}(n-1)\ell (\widetilde{{{I}^\prime}^{n+1}}/J^\prime\widetilde{{{I}^\prime}^n})-(\ell({{I}^\prime}/J^\prime)+\ell({R}^\prime/{I}^\prime))+\ell({R}^\prime/J^\prime)\nonumber\\
&=\sum_{n\geq 2}(n-1)\ell (\widetilde{{{I}^\prime}^{n+1}}/J^\prime\widetilde{{I^\prime}^n})=0.
\end{align*}
Therefore $\widetilde{{I^\prime}^{n+1}}=J^\prime \widetilde{{I^\prime}^{n}}$ for all $n\geq 2$ which gives ${e_d}(\wt{I^\prime})=0.$ Therefore, we have 
\begin{equation}\label{*}
 e_{d}(I^\prime) = {e_d}(\wt{I^\prime})+ (-1)^d \sum_{j \geq 1} \ell
 \left(\frac{\widetilde{{I^\prime}^j }}{{I^\prime}^j }\right) = (-1)^d \lambda (H_{\mathcal{M}}^0
 (L^{I^\prime} (A^\prime))). 
 \end{equation}
See \cite[1.5]{Put2} for the first equality.
On the other hand, by Singh's formula \cite[Proposition 1.2(4)]{rv},  \begin{equation*}
 e_{d}(I^\prime)=e_d(I)+(-1)^d \lambda (B(x,I)).
 \end{equation*}
Since $e_d(I)=0$, we get $\lambda (H_{\mathcal{M}}^0
 (L^{I^\prime} (A^\prime)))=\lambda (B(x,I)).$
Now from the exact sequence \eqref{first_funda_ sequence}, 
$ H_{\mathcal{M}}^1 (L^I (A))(-1) \rightarrow H_{\mathcal{M}}^1 (L^I (A))$ is an injective map of $*$-Artinian modules which implies that $H_{\mathcal{M}}^1 (L^I (A))=0$. This completes the proof. 

 \item It follows from part (\ref{H^i}) and Theorem \ref{4.5pu2}.
 
 \item By Remark \ref{rmk1}, $\depth {G}_{\mathcal{F}}(A)\geq d.$ Now consider the first fundamental exact sequence as defined in \cite{Put1}, 
 $$0\to G_I(A) \to L^I(A) \to L^I(A)(-1)\to 0$$
 and the induced exact sequence of local cohomology modules
 \begin{align}\label{first_fundamental}
   0\to H_{\mathcal{M}}^0(G_I(A))\to H_{\mathcal{M}}^0(L^I(A))\to H_{\mathcal{M}}^0(L^I(A))(-1)\to H_{\mathcal{M}}^1(G_I(A))\to\cdots.
 \end{align}
This gives $H_{\mathcal{M}}^i(G_I(A))=0$ for $2\leq i\leq d-1$. It follows from  Claim \ref{claim1} that 
$$
H_\mathcal{M}^0(G_I(A))=\bigoplus_{i\geq 0} \frac{\wt{I^{i+1}}\bigcap I^i}{I^{i+1}}=\bigoplus_{i\geq 0}\frac{\wt{I^{i+1}}}{I^{i+1}}=H_\mathcal{M}^0(L^I(A)).
$$  
Consequently $H_{\mathcal{M}}^1(G_I(A)) \cong H_\mathcal{M}^0(L^I(A))(-1)=\displaystyle{\bigoplus_{i\geq 0}}\frac{\wt{I^{i}}}{I^{i}}.$  Hence $G_I(A)$ is generalised Cohen-Macaulay, $\depth G_I(A)=0$ or $d$ and $$\mathbb{I}(G_I(A)) = \ell(H_{\mathcal{M}}^0(G_I(A)))+(d-1) \ell(H_{\mathcal{M}}^1(G_I(A))) =d \cdot\ell(H_{\mathcal{M}}^0(L^I(A))).$$

We have $e_2(I)-e_1(I)+e_0(I)-\ell(A/I)=\displaystyle{\sum_{n\geq 2}}(n-1)\ell (\widetilde{{{I}}^{n+1}}/J\widetilde{{{I}}^n})=0 $ which implies $\widetilde{I^{n+1}}=J\widetilde{I^n}$ for all $n\geq 2$. Since ${G}_{\mathcal{F}}(A)$ is Cohen Macaulay which implies  ${e}_{d+1}(\wt{I})=\mathop\sum\limits_{n\geq d}\binom{n}{d}\ell(\wt{I^{n+1}}/J\wt{I^n}) =0.$ Further, $e_{d+1}(I)={e}_{d+1}(\wt{I})+(-1)^{d+1}\ell(H_\mathcal{M}^0(L^I(A))),$ see \cite[1.5]{Put2}.  
 It follows that  $$\mathbb{I}(G_I(A))=d\cdot\ell(H_\mathcal{M}^0(L^I(A)))=   (-1)^{d+1}d\cdot e_{d+1}(I). $$

\textbf{Claim:}\label{claim1} $\widetilde{I^{n+1}}\subseteq I^n$ for $n\geq 1$.
\begin{proof}[Proof of the Claim] 
 We prove the claim by induction. For $n=1,$
 $\widetilde{I^2} \subseteq \widetilde{I}=I$ since $I$ is integrally closed. For $n\geq 2,$ we have
$\widetilde{I^{n+1}} = J\widetilde{I^{n}}$ and by induction hypothesis $J\widetilde{I^{n}}\subseteq J I^{n-1}\subseteq I^{n}.$
\end{proof}
\item We may assume that $G_{\m}(A)$ is not Cohen-Macaulay. In that case, $\mathcal{M}\cdot H_{\mathcal{M}}^i(G_I(A))=0$ for $0\leq i\leq d-1.$ 
\end{enumerate}
\end{proof}

We end this section by citing an example in support of the results of this section. 
\begin{example}\cite[Theorem 5.2]{or}\label{eg1}
Let $m\geq 0$, $d\geq 2$ and $k$ be an infinite field. Consider the power series ring $D=k[[\{X_j\}_{1\leq j\leq m}, Y, \{V_j\}_{1\leq j\leq d}, \{Z_j\}_{1\leq j\leq d}]]$ with $m+2d+1$ indeterminates and the ideal $\mathfrak{a}=[(X_j~|~1\leq j\leq m)+(Y)].[(X_j~|~1\leq j\leq m)+(Y)+(V_i~|~1\leq i\leq d)]+(V_iV_j~|~1\leq i,j\leq d, i\neq j)+(V_i^3-Z_iY~|~1\leq i\leq d).$ Define $A=D/\mathfrak{a}$ and $x_i,y,v_i,z_i$ denote the images of $X_i, Y, V_i, Z_i$ in $A$ respectively. Let $\mathfrak{m}=(x_j~|~1\leq j\leq m)+(y)+(v_j~|~1\leq j\leq d)+(z_j~|~1\leq j\leq d)$ be the maximal ideal in $A$ and $Q=(z_j~|~1\leq j\leq d).$ Then
 \renewcommand{\labelenumi}{(\roman{enumi})}
\begin{enumerate}
    \item $A$ is Cohen-Macaulay local ring with $\dim A=d,$
    \item $Q$ is a minimal reduction of $\m$ with $r_Q(\m)=3,$ 
    \item $G_\m(A)$ is Buchsbaum ring with $\depth G_\m(A)=0$ and  $\mathbb{I} (G_\m(A)) =d, $
    \item The Hilbert series $H(\m,t)$ of $A$ is given by
    $$
    H(\m,t)=\frac{1+\{m+d+1\}t+\displaystyle{\sum_{j=3}^{d+2}(-1)^{j-1}}\binom{d+1}{j-1}t^j}{(1-t)^d}
    $$
    \item $e_0(\m)=m+2d+2;$ $e_1(\m)=m+3d+2$; $e_2(\m)=d+1$ and $e_i(\m)=0$ for $3\leq i\leq d.$   
\item  \label{rref}Here $e_2(\m)-e_1(\m)+e_0(\m)-\ell (A/\m)=0$. Thus by Theorem \ref{rrf}, the Ratliff-Rush filtration of $\m$ behaves well modulo a superficial sequence of length $d-1$. In \cite[Example 5.1]{sy}, authors indicated that the Ratliff-Rush filtration of $I$ behaves well mod one superficial element which is now improved.
\item  Since $e_3(\m)=e_2(\m)(e_2(\m)-e_1(\m)+e_0(\m)-\ell(A/\m))$ and $\wt{\m^3}=\m^3$ by \cite{or}. Hence by Theorem \ref{cleto3}, we get $\reg G_\m(A)=r_Q(\m)=3.$

  \end{enumerate} 
\end{example}


\section{Bounds on the Higher Hilbert coefficients}\label{section-4}
In this section, we prove an upper bounds on the third Hilbert coefficient. One of the bounds is for an $I$-admissible filtration while the other is for $\m$-primary integrally closed ideals. The boundary conditions along with vanishing of the higher coefficients forces high depth of  $G_{{\mathcal{F}}}(A)$. 

\begin{theorem}\label{e3_m-primary}
 Let $(A,\m)$ be a Cohen-Macaulay local ring of dimension $d\geq 3$ and $I$ an $\m$- primary ideal. Suppose $\mathcal{I}=\{I_n\}_{n\geq 0}$ is an $I$-admissible filtration. Then 
    \begin{equation}\label{e3 for filtration}
     e_3(\mathcal{I})\leq (e_2(\mathcal{I})-1)e_2(\mathcal{I}).
    \end{equation}    
    Furthermore, suppose $\mathcal{I}=\{I^n\}_{n\geq 0}$ and $I$ is an integrally closed ideal then,
    \begin{equation}\label{e3 integrally closed}
    e_3(I)\leq (e_2(I)-e_1(I)+e_0(I)-\ell(A/I))e_2(I).
    \end{equation}
    
Suppose $d=3$ and equality holds in \eqref{e3 for filtration}  or \eqref{e3 integrally closed} for respective $\I$ then  $\depth G_{\mathcal{F}}(A)\geq 2.$

\end{theorem}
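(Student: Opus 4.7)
The plan is a two-step reduction: first to dimension three via a generic superficial sequence, then to the Ratliff-Rush filtration $\wt{\mathcal{I}}$, followed by a passage to dimension two where depth-controlled formulas apply. For $d \geq 4$, take a generic superficial sequence $x_1, \ldots, x_{d-3}$ for $\mathcal{I}$, chosen (in the integrally closed case) to keep $I$ integrally closed in $A/(x_1,\ldots, x_{d-3})$ by Itoh's standard argument. Since $e_0, e_1, e_2, e_3$ are preserved under passage modulo a superficial element as long as the ambient dimension exceeds the index, we may assume $d = 3$. Then $e_i(\mathcal{I}) = e_i(\wt{\mathcal{I}})$ for $i = 0, 1, 2$ and $e_3(\mathcal{I}) = e_3(\wt{\mathcal{I}}) - \sum_{j \geq 1} \ell(\wt{I_j}/I_j) \leq e_3(\wt{\mathcal{I}})$, so it suffices to prove the inequality with $\wt{\mathcal{I}}$ in place of $\mathcal{I}$; the gain is $\depth G_{\wt{\mathcal{I}}}(A) \geq 1$, which is automatic.

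Next, pick a superficial element $y$ for $\wt{\mathcal{I}}$ and set $\bar{A} = A/(y)$, of dimension two. Using the first fundamental exact sequence \eqref{first_funda_ sequence} applied to $\wt{\mathcal{I}}$ together with the positive grade of $G_{\wt{\mathcal{I}}}(A)$ (which makes the image of $y$ a non-zerodivisor on it), one obtains expressions
$$
e_2(\wt{\mathcal{I}}) = \sum_{n \geq 1} n\, V_n + \delta_2, \qquad e_3(\wt{\mathcal{I}}) = \sum_{n \geq 2} \binom{n}{2}\, V_n + \delta_3,
$$
where $V_n = \ell(\wt{\bar{I}_{n+1}}/\bar{J}\,\wt{\bar{I}_n}) \geq 0$ are the length invariants for the image filtration in $\bar{A}$ and $\delta_2, \delta_3 \geq 0$ are corrections coming from the local cohomology modules of $L^{\wt{\mathcal{I}}}(A)$ that measure the discrepancy between $\wt{\mathcal{I}}$ and its reduction modulo $y$. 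The desired bound then reduces to the purely combinatorial inequality
$$
\sum_{n \geq 2} \binom{n}{2} V_n \;\leq\; \Bigl(\sum_{n \geq 1} n V_n\Bigr)\Bigl(\sum_{n \geq 1} n V_n - 1\Bigr),
$$
which holds for any sequence of non-negative integers $V_n$: indeed $(\sum n V_n)^2 \geq \sum n^2 V_n^2 \geq \sum n^2 V_n$ (since $V_n^2 \geq V_n$), and rearranging gives the claim. For the integrally closed variant, $\wt{I} = I$ eliminates $\ell(\wt{I_1}/I_1)$ from the discrepancy sum, and Theorem~\ref{conseq of behav of RREF}(iii) identifies $e_2(I) - e_1(I) + e_0(I) - \ell(A/I)$ with a tail sum of the same $V_n$'s, producing the tighter bound $(e_2 - e_1 + e_0 - \ell(A/I))\, e_2$.

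For the boundary case in $d = 3$, equality throughout forces $\delta_2 = \delta_3 = 0$ and tightness of the combinatorial inequality; the integrality of the $V_n$ then pins them to a very restricted pattern, giving $\wt{I_{n+1}} = J\,\wt{I_n}$ for all $n$ above a small threshold, and hence $\depth G_{\wt{\mathcal{I}}}(A) \geq 2$ by the Valabrega--Valla criterion. The main obstacle will be setting up the middle paragraph so that the corrections $\delta_2, \delta_3$ are manifestly non-negative with the correct vanishing in the boundary case; this requires the careful local-cohomology analysis of $L^{\wt{\mathcal{I}}}(A)$ introduced in Section~\ref{section-3}, in particular the sign information coming from the $*$-Artinian structure of the relevant $H^i_{\mathcal{M}}$.
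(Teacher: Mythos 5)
Your overall strategy---reduce to $d=3$ by superficial elements, pass to a two\-/dimensional quotient, and express everything through the invariants $V_n=\ell\bigl(\wt{\mathcal F'_{n+1}}/J'\wt{\mathcal F'_n}\bigr)$---is sound, and your combinatorial inequality is a genuinely different (and in fact sharper) engine than the one the paper uses. The paper writes $e_3(\wt{\mathcal F'})=\sum_{n\geq 1}\ell\bigl(H^2_{\mathcal R_+}(\mathcal R(\wt{\mathcal F'}))_n\bigr)$, uses that these lengths are non\-/increasing with $\ell\bigl(H^2_{\mathcal R_+}(\mathcal R(\wt{\mathcal F'}))_0\bigr)=e_2(\mathcal I)$, and bounds the number of nonzero terms by $a_2\leq \wt{r}(\mathcal F')-2$, which is then controlled by the reduction\-/number bounds of \cite[Theorem 2.2]{ms} and \cite[Lemma 3.8]{sy}. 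Your route replaces all of that by the closed formulas $e_2(\mathcal I)=\sum_{n\geq 1}nV_n$ and $e_3(\wt{\mathcal F'})=\sum_{n\geq 2}\binom{n}{2}V_n$ (valid because $\depth G_{\wt{\mathcal F'}}(A')\geq 1=\dim A'-1$) together with the elementary estimate $\sum n^2V_n\leq\sum n^2V_n^2\leq\bigl(\sum nV_n\bigr)^2$; this in fact yields the stronger conclusion $e_3(\mathcal I)\leq\binom{e_2(\mathcal I)}{2}$, and the analogous expansion $\sum_{n\geq 2}\binom{n}{2}V_n\leq\bigl(\sum_{n\geq 2}(n-1)V_n\bigr)\bigl(\sum_{m\geq 1}mV_m\bigr)$ recovers the integrally closed bound once one checks, as the paper does, that $V_0=\ell(I'/J')$ when the images of $I$ stay integrally closed.

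Two points in your middle paragraph need repair before this is a proof. First, in dimension $3$ the identity $e_3(\mathcal I)=e_3(\wt{\mathcal I})-\sum_{j}\ell(\wt{I_j}/I_j)$ is false: there $e_3$ is the top coefficient of the Hilbert polynomial, which is unchanged by Ratliff--Rush closure; the discrepancy formula you are invoking governs $e_{d+1}$, and it only bites after descending to the two\-/dimensional ring, where $e_3=e_{d+1}$. The correct chain is $e_3(\mathcal I)=e_3(\mathcal F)=e_3(\mathcal F')\leq e_3(\wt{\mathcal F'})$, the middle equality by Singh's formula using $\depth G_{\mathcal F}(A)\geq 1$. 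Second, and relatedly, insisting that $\delta_3\geq 0$ points the wrong way: if $e_3=\sum\binom{n}{2}V_n+\delta_3$ with $\delta_3\geq 0$, then an upper bound on $\sum\binom{n}{2}V_n$ does not bound $e_3$ from above. What you need (and what is true) is $\delta_3=-\sum_{n\geq 0}\ell\bigl(\wt{\mathcal F'_{n+1}}/\mathcal F'_{n+1}\bigr)\leq 0$, while $\delta_2=0$ exactly. With the signs fixed the boundary case also comes out cleanly: equality forces $\delta_3=0$, hence $\wt{\mathcal F'}=\mathcal F'$, hence $\depth G_{\mathcal F'}(A')\geq 1$ and $\depth G_{\mathcal F}(A)\geq 2$ by Sally's machine---no analysis of the admissible patterns of the $V_n$ is needed.
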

\begin{proof}
Suppose $d=3$. Let $\mathcal F=\{\widetilde{I_n}\}$ and $x\in I$ be a superficial element for $I$. Then $x$ is also a superficial element for the filtration $\mathcal F$. Suppose $J=(x,y,z)$ is a minimal reduction of $I$. Let $A^\prime=A/(x)$ and ${\mathcal F}^\prime=\{{\mathcal F}^\prime_n=\frac{\widetilde{I_n}+(x)}{(x)}\}$. Since $\depth G_\mathcal{F}(A)\geq 1$, hence by Singh's formula see \cite[Proposition 1.2]{rv}, we get $e_3(\mathcal{I})=e_3(\mathcal{F})=e_3(\mathcal{F}^\prime).$
Further, by \cite[1.5]{Put2}, we have
\begin{equation}\label{e3}
{e}_3(\wt{ {\mathcal F}^\prime})-e_3(\mathcal{F}^\prime)= \sum_{n\geq 0} \ell\left (\frac{\widetilde{{\mathcal F}^\prime_{n+1}}}{{\mathcal F}^\prime_{n+1}}\right )\geq 0.
\end{equation}
By \cite[Proposition 4.4]{bl},  we have
for all $n\geq -1$,
\begin{equation} \label{diff}
P_{\widetilde{{\mathcal F}^\prime}}(n)-H_{\widetilde{{{\mathcal F}^\prime}}} (n)=\ell((H^2_{\mathcal R_+}(\mathcal R(\widetilde{{ {\mathcal F}^\prime}})))_{n+1}). 
\end{equation}
Now, taking the sum for $m\gg 0$ on both sides of the above equation, we get
\begin{eqnarray*}
\sum_{n=0}^m \ell((H^2_{\mathcal R_+}(\mathcal R(\widetilde{{ {\mathcal F}^\prime}})))_{n+1})&=& \sum_{n=0}^m P_{\widetilde{{\mathcal F}^\prime}}(n)-\sum_{n=0}^m H_{\widetilde{{{\mathcal F}^\prime}}} (n)\\
&=& \sum_{n=0}^m P_{\widetilde{{\mathcal F}^\prime}}(n)- H^2_{\widetilde{{{\mathcal F}^\prime}}} (m)
\\
&=& {e}_0(\wt{{\mathcal F}^\prime}){m+3\choose 3}-{e}_1(\wt{{\mathcal F}^\prime}){m+2\choose 2}+{e}_2(\wt{{\mathcal F}^\prime}){m+1\choose 1}- P^2_{\widetilde{{{\mathcal F}^\prime}}} (m)\\
&=&{e}_3(\wt{{\mathcal F}^\prime}).
\end{eqnarray*}
Since $A^\prime$ is a $2$-dimensional Cohen-Macaulay local ring,  we have $$\ell((H^2_{\mathcal R_+}(\mathcal R(\widetilde{{ {\mathcal F}^\prime}})))_n)\leq \ell((H^2_{\mathcal R_+}(\mathcal R(\widetilde{{ {\mathcal F}^\prime}})))_{n-1})$$ for all $n\in \mathbb Z$ by \cite[Lemma 4.7]{bl}. Now in equation \eqref{diff}, we substitute $n=-1$ to get $$\ell((H^2_{\mathcal R_+}(\mathcal R(\widetilde{{ {\mathcal F}^\prime}})))_0)={e}_2({ \wt{{\mathcal F}^\prime}})=e_2({\mathcal F}^\prime)=e_2(\mathcal F)=e_2(\mathcal{I}).$$
Then,
\begin{equation}
{e}_3(\wt{ {\mathcal F}^\prime})=\sum_{n=0}^m \ell((H^2_{\mathcal R_+}(\mathcal R(\widetilde{{ {\mathcal F}^\prime}})))_{n+1})\leq\sum_{n=0}^ {a_2(\mathcal R(\widetilde{ {\mathcal F}^\prime}))-1}   \ell((H^2_{\mathcal R_+}(\mathcal R(\widetilde{{ {\mathcal F}^\prime}})))_0)= a_2(\mathcal R(\widetilde{ {\mathcal F}^\prime}))e_2(\mathcal{I})\nonumber.
\end{equation}
where $a_2(\mathcal R(\widetilde{ {\mathcal F}^\prime}))\leq a_2(G_{\widetilde{ {\mathcal F}^\prime}}(A^\prime)).$  By \cite[Corollary 5.7(2)]{tm} and \cite[Theorem 2.2]{ms} respectively, $a_2(G_{\widetilde{ {\mathcal F}^\prime}}(A^\prime))=\widetilde{r}({\mathcal F}^\prime)-2\leq e_2({\mathcal F}^\prime)+1-2=e_2(\mathcal{I})-1$.
Hence by using \eqref{e3}, we get
\begin{equation}\label{e3tilde}
    e_3(\mathcal{I})=e_3({\mathcal{F}}^\prime)\leq {e}_3(\wt{ {\mathcal F}^\prime}) \leq (\widetilde{r}({\mathcal F}^\prime)-2)e_2(\mathcal{I})\leq e_2(\mathcal{I})(e_2(\mathcal{I})-1).
\end{equation}
Furthermore, Suppose $I$ is an integrally closed ideal and $\mathcal{I}=\{I^n\}.$ Then by \cite[Lemma 3.8]{sy}, we have that $\widetilde{r}({\mathcal F}^\prime)\leq e_2({\mathcal F}^\prime)-e_1({\mathcal F}^\prime)+e_0({\mathcal F}^\prime)-\ell(A/I)+2.$ Therefore by similar argument using \eqref{e3tilde}, we get \begin{equation}\label{e3int}
    e_3(I)\leq e_3(\mathcal{F}^\prime)\leq (e_2(I)-e_1(I)+e_0(I)-\ell(A/I))e_2(I).
\end{equation}
Now, suppose equality holds either in \eqref{e3tilde} or \eqref{e3int}, then $e_3({\mathcal{F}}^\prime)= {e}_3( \wt{{\mathcal F}^\prime})$ which gives $\depth G_{\mathcal{F}^\prime}(A^\prime)\geq 1$ using \eqref{e3}. Therefore by Sally's machine, see \cite[Lemma 1.4]{rv}, we get $\depth G_\mathcal{F}(A)\geq 2$.

Now for $d\geq 4$, let $x\in I$ be a superficial element for $\mathcal{I}$ and $\mathcal{I}^\prime=\{\frac{I_n+(x)}{(x)}\}.$ Then $e_i(\mathcal{I}^\prime)=e_i(\mathcal{I})$ for $0\leq i \leq 3.$   
 Therefore, by induction,
 \begin{equation*}
     \begin{split}
         e_3(\mathcal{I}) =e_3(\mathcal{I}^\prime) &\leq e_2(\mathcal{I}^\prime)(e_2(\mathcal{I}^\prime)-1) \\ 
 &=e_2(\mathcal{I})(e_2(\mathcal{I})-1).
     \end{split}
 \end{equation*}
 
 Similarly, let $x\in I$ be a superficial element of $I$ such that $I^\prime= I/(x)$ is an integrally closed ideal in $A/(x)$. Then $e_i(I^\prime)=e_i(I)$ for $0\leq i \leq 3.$ Therefore, by induction
 \begin{equation*}
   \begin{split}
e_3(I)=e_3(I^\prime)&\leq (e_2(I^\prime)-e_1(I^\prime)+e_0(I^\prime)-\ell(A/I))e_2(I^\prime) \\
 &=(e_2(I)-e_1(I)+e_0(I)-\ell(A/I))e_2(I)
 \end{split}
 \end{equation*}
 \end{proof}
The immediate byproduct of Theorem \ref{e3_m-primary} are the following results from literature. 
\begin{corollary}
    Let $(A,\m)$ be a Cohen-Macaulay local ring of dimension $d\geq3.$ Let $I$ an $\m$-primary ideal and $J$ a minimal reduction of $I$. 
     \renewcommand{\labelenumi}{(\roman{enumi})}
    \begin{enumerate}
        \item (\cite[Proposition 6.4]{Put2} and \cite[Proposition 2.6]{mf2}) If $e_2(I)\leq 1$, then $e_3(I)\leq 0$.
        \item (\cite[Theorem 2.8]{mf2})  If $I$ an $\m$-primary integrally closed ideal and $e_2(I)=e_1(I)-e_0(I)+\ell(A/I)$, then $e_3(I)\leq 0.$
    \end{enumerate}  
\end{corollary}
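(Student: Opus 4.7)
The statement is a corollary of Theorem \ref{e3_m-primary}, so the plan is to derive each part by specializing that theorem's bounds to the given hypotheses.

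For part (i), I would apply the bound \eqref{e3 for filtration} to the $I$-adic filtration $\mathcal{I}=\{I^n\}_{n\geq 0}$, which gives
\[
e_3(I)\leq (e_2(I)-1)\,e_2(I).
\]
By Narita's classical theorem, $e_2(I)\geq 0$ for any $\m$-primary ideal in a Cohen-Macaulay local ring, so under the hypothesis $e_2(I)\leq 1$ the second coefficient takes only the values $0$ or $1$. In both cases the product $(e_2(I)-1)e_2(I)$ equals $0$, yielding $e_3(I)\leq 0$.

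For part (ii), I would apply the refined bound \eqref{e3 integrally closed} for integrally closed ideals, namely
\[
e_3(I)\leq \bigl(e_2(I)-e_1(I)+e_0(I)-\ell(A/I)\bigr)\,e_2(I).
\]
The hypothesis $e_2(I)=e_1(I)-e_0(I)+\ell(A/I)$ forces the bracketed factor to vanish, so the bound collapses to $e_3(I)\leq 0$.

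There is no real obstacle: both parts are one-line consequences once the stronger Theorem \ref{e3_m-primary} is in hand. The only subtlety worth flagging is the invocation of Narita's nonnegativity $e_2(I)\geq 0$ in part (i), which is needed to rule out negative values of $e_2(I)$ that would otherwise permit $(e_2(I)-1)e_2(I)$ to be positive; this is standard in the Cohen-Macaulay $\m$-primary setting and requires no additional assumption.
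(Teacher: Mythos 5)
Your proposal is correct and follows exactly the route the paper intends: the corollary is stated as an immediate specialization of Theorem \ref{e3_m-primary}, with part (i) obtained from the bound $e_3(I)\leq(e_2(I)-1)e_2(I)$ and part (ii) from the integrally closed bound with vanishing first factor. Your explicit appeal to Narita's nonnegativity $e_2(I)\geq 0$ to conclude $e_2(I)\in\{0,1\}$ in part (i) is a detail the paper leaves implicit, and it is indeed the right fact needed to close that case.
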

\begin{example}\label{eg3}
    Let $A=k[[x,y,z]]$ and $I=(x^2-y^2,y^2-z^2,xy,yz)$. By CoCoA, the Hilbert series is
    $$
    H(I,t)=\frac{6+t+t^3}{(1-t)^3}.
    $$
    Then $e_2(I)=3$ and $e_3(I)=1\leq 6=e_2(I)(e_2(I)-1)$. 
\end{example}
The bound on Theorem \ref{e3_m-primary}, for integrally closed ideals, may not hold for non-integrally closed $\m$-primary ideals.
\begin{example}\cite[Example 3.1]{sy}
   Let $A= k[x,y,z]_{(x,y,z)}$ and $I=(x^4, y^4, z^4, x^3y, xy^3, y^3z,$ $ yz^3,x^2yz,xy^2z).$ Then $\depth G_I(A)=0$, the Hilbert series of $I$ is
    $$    H(I,t)=\frac{29+19t+19t^2-2t^3-2t^4+t^5}{(1-t)^3}
    $$
    and the Hilbert polynomial is
    $$
   P_I(n)=64{\binom{n+2}{3}}-48{\binom{n+1}{2}}+11{\binom{n}{1}}.
    $$
   Therefore, $e_2(I)-e_1(I)+e_0(I)-\ell (R/I)=-2$ and $e_3(I)=0.$ Hence, $e_3(I)> e_2(I)(e_2(I)-e_1(I)+e_0(I)-\ell(A/I))$, however  $e_3(I)\leq e_2(I)(e_2(I)-1).$  
\end{example}
Now, we attempt to generalize the last part of Theorem \ref{e3_m-primary} in higher dimension. For this, we need the following lemma. 

\begin{lemma}\label{e_d-1 in d dimen}
    Let $(A,\m)$ be a $d\geq 4$-dimensional Cohen-Macaulay local ring, $I$ an $\m$-primary ideal and $\mathcal{I}=\{I_n\}$ an $I$-admissible filtration. If $e_3(\mathcal{I})=e_2(\mathcal{I})(e_2(\mathcal{I})-1)$ and $e_k(\mathcal{I})=0$ for $4\leq k \leq d-1$ then $\depth {G}_{\wt{\mathcal{F}^\prime}}(A^\prime)\geq d-2$ where $\mathcal{F}= \{\wt{I_n}\}_{n\geq 0}$ and ${\mathcal{F}^\prime}=\{\frac{\wt{I_n}+(x_1)}{(x_1)}\}_{n\geq 0}$ for a superficial element $x_1$ of $\mathcal{I}.$
\end{lemma}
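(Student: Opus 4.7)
The plan is to proceed by induction on $d\geq 4$, reducing each step modulo a superficial element and using Theorem \ref{e3_m-primary} as the base.

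For the base case $d=4$, fix a superficial element $x_1\in I$ for $\mathcal{I}$, set $A'=A/(x_1)$ (of dimension $3$), and consider $\mathcal{I}'=\{(I_n+(x_1))/(x_1)\}_{n\geq 0}$. The Hilbert coefficients transfer under this superficial reduction as $e_i(\mathcal{I}')=e_i(\mathcal{I})$ for $0\leq i\leq 3$, so in particular $e_3(\mathcal{I}')=e_2(\mathcal{I}')(e_2(\mathcal{I}')-1)$. The three-dimensional case of Theorem \ref{e3_m-primary} applied to $\mathcal{I}'$ in $A'$ yields $\depth G_{\wt{\mathcal{I}'}}(A')\geq 2=d-2$. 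Finally, since $\wt{I_n}=I_n$ for $n\gg 0$, the filtrations $\mathcal{F}'=\{(\wt{I_n}+(x_1))/(x_1)\}_{n\geq 0}$ and $\mathcal{I}'$ share the same tail in $A'$ and hence the same Ratliff--Rush closure degree by degree; this identification $\wt{\mathcal{F}'}=\wt{\mathcal{I}'}$ gives $\depth G_{\wt{\mathcal{F}'}}(A')\geq d-2$.

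For the inductive step at dimension $d\geq 5$, choose a superficial element $x_1$ for $\mathcal{I}$ and pass to $\mathcal{I}'$ in $A'$ as above. The hypotheses of the lemma transfer to dimension $d-1$: $e_3(\mathcal{I}')=e_2(\mathcal{I}')(e_2(\mathcal{I}')-1)$ and $e_k(\mathcal{I}')=e_k(\mathcal{I})=0$ for $4\leq k\leq d-2$. The inductive hypothesis, applied to $\mathcal{I}'$ in $A'$, yields a superficial element $x_2$ such that, writing $A''=A'/(x_2)$ and $\mathcal{F}''=\{(\wt{\mathcal{I}'_n}+(x_2))/(x_2)\}_{n\geq 0}$, one has $\depth G_{\wt{\mathcal{F}''}}(A'')\geq (d-1)-2=d-3$. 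Again $\wt{\mathcal{F}''}=\wt{\mathcal{I}''}$ with $\mathcal{I}''=\mathcal{I}'/(x_2)$. Sally's machine for admissible filtrations then promotes this: provided $\depth G_{\wt{\mathcal{I}'}}(A')\geq 1$, i.e.\ that $x_2^*$ is a nonzerodivisor on $G_{\wt{\mathcal{I}'}}(A')$, the inequality $\depth G_{\wt{\mathcal{I}''}}(A'')\geq d-3\geq 1$ lifts to $\depth G_{\wt{\mathcal{I}'}}(A')\geq d-2$, and via $\wt{\mathcal{F}'}=\wt{\mathcal{I}'}$ the induction closes.

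The main obstacle is bootstrapping Sally's machine in the inductive step, i.e., separately verifying $\depth G_{\wt{\mathcal{I}'}}(A')\geq 1$. I would handle this through Theorem \ref{4.5pu2} combined with the fundamental exact sequence \eqref{first_funda_ sequence}: the preserved hypotheses on the Hilbert coefficients of $\mathcal{I}'$ and the $*$-Artinian property of the cohomology modules $H^i_\mathcal{M}(L^{\mathcal{I}'}(A'))$ should force $H^1_\mathcal{M}(L^{\mathcal{I}'}(A'))=0$; by Theorem \ref{4.5pu2} this vanishing is equivalent to the good behavior of the Ratliff--Rush filtration of $\mathcal{I}'$ modulo one superficial element, which via Remark \ref{rmk1} gives $\depth G_{\wt{\mathcal{I}'}}(A')\geq 2$, more than enough to trigger the machine.
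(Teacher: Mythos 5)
Your base case is essentially the paper's: reduce modulo one superficial element, note that the Hilbert coefficients are preserved, and invoke the $d=3$ equality case of Theorem \ref{e3_m-primary}; the identification $\wt{\mathcal{F}^\prime}=\wt{\mathcal{I}^\prime}$ via common tails is harmless. The inductive step, however, has a genuine gap. The induction hypothesis applied to $\mathcal{I}^\prime$ in $A^\prime$ gives a lower bound on $\depth G_{\wt{\mathcal{H}^{\prime\prime}}}(A^{\prime\prime})$, where $\mathcal{H}^{\prime\prime}=\{(\wt{\mathcal{I}^\prime_n}+(x_2))/(x_2)\}$ is the image modulo $x_2$ of $\mathcal{H}=\wt{\mathcal{I}^\prime}$ and $\wt{\mathcal{H}^{\prime\prime}}$ is the Ratliff--Rush closure of that image. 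But Sally's machine lifts depth from $G_{\mathcal{H}^{\prime\prime}}(A^{\prime\prime})$ --- the associated graded ring of the image filtration itself --- up to $G_{\mathcal{H}}(A^\prime)$. In general one only has $\mathcal{H}^{\prime\prime}_n\subseteq\wt{\mathcal{H}^{\prime\prime}_n}$, and whether equality holds is exactly the ``good behaviour of the Ratliff--Rush filtration'' issue this paper is organized around; it cannot be assumed. Your side condition ``$\depth G_{\wt{\mathcal{I}^\prime}}(A^\prime)\geq 1$'' does not address this: that inequality is automatic for any Ratliff--Rush filtration (a superficial element is a nonzerodivisor on its associated graded ring), and knowing it says nothing about whether $\mathcal{H}^{\prime\prime}$ equals its own Ratliff--Rush closure. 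So the obstacle is misdiagnosed, and the proposed remedy --- forcing $H^1_{\mathcal{M}}(L^{\mathcal{I}^\prime}(A^\prime))=0$ from the coefficient hypotheses --- is precisely the kind of statement that would itself require proof; the paper only obtains such vanishing for integrally closed ideals under different hypotheses (Theorem \ref{rrf}).

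The paper closes this gap by a computation absent from your proposal. Writing $\mathcal{J}=\wt{\mathcal{F}^\prime}$ and $\mathcal{J}^\prime$ for its image modulo $x_2$, it proves $e_{d-1}(\wt{\mathcal{J}^\prime})=0$: the inequality $e_{d-1}(\wt{\mathcal{J}^\prime})\geq 0$ follows from \cite[Proposition 4.4]{bl} once $\depth G_{\wt{\mathcal{J}^\prime}}(A^{\prime\prime})\geq d-3$ is known, and the reverse inequality uses either the sign coming from $\sum_{n\geq 0}\ell(\wt{\mathcal{J}^\prime_{n+1}}/\mathcal{J}^\prime_{n+1})=(-1)^{d}e_{d-1}(\wt{\mathcal{J}^\prime})$ when $d$ is odd, or, when $d$ is even, the monotonicity of $\ell((H^{d-2}_{\mathcal R_+}(\mathcal R(\wt{\mathcal{J}^\prime})))_n)$ together with $e_{d-2}(\mathcal{I})=0$. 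Combined with $e_{d-1}(\mathcal{J}^\prime)=e_{d-1}(\mathcal{I})=0$ (Singh's formula), the difference formula forces $\wt{\mathcal{J}^\prime_{n}}=\mathcal{J}^\prime_{n}$ for all $n$, and only then does Sally's machine apply. Note that this is where the hypotheses $e_k(\mathcal{I})=0$ for $4\leq k\leq d-1$ do real work; in your argument they are only carried along, never used.
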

\begin{proof}
 We use induction on $d$. First, assume $d=4$.
    Suppose $J=(x_1,x_2,x_3,x_4)$ is a minimal reduction of $\mathcal{I}$. We write $A^\prime=A/(x_1)$ and $\mathcal{F}^\prime=\Big\{\frac{\wt{I_n}+(x_1)}{(x_1)}\Big \}_{n\geq 0}$. Then $e_i(\mathcal{F}^\prime)=e_i(\mathcal{I})$ for  $0\leq i\leq 3.$ Thus $e_3(\mathcal{F}^\prime)=e_2(\mathcal{F}^\prime)(e_2(\mathcal{F}^\prime)-1).$  By Theorem \ref{e3_m-primary}, we obtain that $\depth G_{\wt{\mathcal{F}^\prime}}(A^\prime)\geq 2.$
Now we  prove the result for $d\geq 5$ and assume that it holds true for dimension $d-1$. 
 Suppose $J=(x_1,\ldots,x_d)$ is a minimal reduction of $\mathcal{I}$. Set $\mathcal{F}^\prime=\Big\{\frac{\wt{I_n}+(x_1)}{(x_1)}\Big\}_{n\geq 0}$ which is an $I/(x_1)$-admissible filtration in $A^\prime=A/(x_1)$. For convenience, we write $\mathcal{J}$ for the filtration $\wt{(\mathcal{F}^\prime)}$. By induction hypothesis, 
  \begin{equation}\label{depth tilde}
\depth G_{ \wt{{\mathcal J}^\prime}}(A^{\prime\prime})\geq d-3.
 \end{equation}
 where $A^{\prime\prime}=A/(x_1,x_2)$ and $\mathcal {J}^\prime$ is the filtration $\Big\{\frac{\mathcal{J}_n+(x_2)}{(x_2)} \Big\}_{n\geq 0}$ in $A^{\prime\prime}.$
Note that $\dim A^{\prime\prime}=d-2$ and $\depth G_{\mathcal{J}}(A^\prime)\geq 1$ implies $e_{d-1}(\mathcal{J}^\prime)=e_{d-1}(\mathcal{J})$ by Singh's formula. Further,  $ e_{d-1}(\mathcal{J})=e_{d-1}(\mathcal{I})$ which is given to be zero.  By \cite[1.5]{Put2}, we obtain
\begin{equation}\label{e_d odd}
0\leq \sum_{n\geq 0} \ell\left (\frac{\widetilde{{\mathcal J}^\prime_{n+1}}}{{\mathcal J}^\prime_{n+1}}\right )=(-1)^{d-1}\big( e_{d-1}(\mathcal{J}^\prime)-{e}_{d-1}( \wt{{\mathcal J}^\prime})\big)\\
=(-1)^d {e}_{d-1}( \wt{{\mathcal J}^\prime}).
\end{equation}  
On the other hand,  since $\depth G_{ \wt{{\mathcal J}^\prime}}(A^{\prime\prime})\geq d-3$, 
by \cite[Proposition 4.4]{bl},  we have
for all $n\geq -1$,
\begin{equation} \label{23}
P_{\widetilde{{\mathcal J}^\prime}}(n)-H_{\widetilde{{{\mathcal J}^\prime}}} (n)=(-1)^{d-2}\ell((H^{d-2}_{\mathcal R_+}(\mathcal R(\widetilde{{ {\mathcal J}^\prime}})))_{n+1}). 
\end{equation}
Now, taking the sum for $m\gg 0$ on both sides of the above equation, we get
\begin{align}
(-1)^{d-2}\sum_{n=0}^m \ell((H^{d-2}_{\mathcal R_+}(\mathcal R(\widetilde{{ {\mathcal J}^\prime}})))_{n+1})&= \sum_{n=0}^m P_{\widetilde{{\mathcal J}^\prime}}(n)-\sum_{n=0}^m H_{\widetilde{{{\mathcal J}^\prime}}} (n)\nonumber\\
\begin{split}
&= \sum_{n=0}^m P_{\widetilde{{\mathcal J}^\prime}}(n)- H^2_{\widetilde{{{\mathcal J}^\prime}}} (m)
\\
&= \sum_{i=0}^{d-2} (-1)^i {e}_i(\wt{\mathcal{J}^\prime}){m+d-1-i \choose d-1-i}- P^2_{\widetilde{{{\mathcal J}^\prime}}} (m)\\
&=(-1)^{d}{e}_{d-1}(\wt{{\mathcal J}^\prime}).
\end{split}
\end{align}
This gives  $${e}_{d-1}(\wt{{\mathcal J}^\prime})\geq 0.$$ 
If $d$ is odd, then using \eqref{e_d odd}, we get ${e}_{d-1}(\wt{{\mathcal J}^\prime})=0.$ Now we show the same holds even if $d$ is even. We may assume that $d\geq 6$.  
Since $A^{\prime\prime}$ is a $(d-2)$-dimensional Cohen-Macaulay local ring,  we have $$\ell((H^{d-2}_{\mathcal R_+}(\mathcal R(\widetilde{{ {\mathcal J}^\prime}})))_n)\leq \ell((H^{d-2}_{\mathcal R_+}(\mathcal R(\widetilde{{ {\mathcal J}^\prime}})))_{n-1})$$ for all $n\in \mathbb Z$ by \cite[Lemma 4.7]{bl}. Now in equation \eqref{23}, we substitute $n=-1$ to get $$\ell((H^{d-2}_{\mathcal R_+}(\mathcal R(\widetilde{{ {\mathcal J}^\prime}})))_0)={e}_{d-2}(\wt{ {\mathcal J}^\prime})=e_{d-2}({\mathcal J}^\prime)=e_{d-2}(\mathcal J)=e_{d-2}(\mathcal{I})=0.$$
Thus, 
\begin{equation}
{e}_{d-1}(\wt{ {\mathcal J}^\prime})=\sum_{n=0}^m \ell((H^{d-2}_{\mathcal R_+}(\mathcal R(\widetilde{{ {\mathcal J}^\prime}})))_{n+1})\leq\sum_{n=0}^ m  \ell((H^{d-2}_{\mathcal R_+}(\mathcal R(\widetilde{{ {\mathcal J}^\prime}})))_0)= 0.\nonumber
\end{equation}
 Hence ${e}_{d-1}(\wt{{\mathcal J}^\prime})= 0.$ Now, using 
 \eqref{e_d odd}, we get $\wt{\mathcal{J}_{n+1}^\prime}=\mathcal{J}_{n+1}^\prime$ for $n\geq 0$. Therefore, $\depth G_{ {\mathcal J}^\prime}(A^{\prime\prime})=\depth G_{\widetilde{ {\mathcal J}^\prime}}(A^{\prime\prime})\geq d-3$. Hence by Sally's Machine \cite[Proposition 1.7]{rv}, we get $\depth G_\mathcal{J}(A^\prime)\geq d-2.$
\end{proof}
\begin{theorem}\label{signature of ed}
    Let $(A,\m)$ be a $d\geq 4$-dimensional Cohen-Macaulay local ring, $I$ an $\m$-primary ideal and $\mathcal{I}=\{I_n\}$ an $I$-admissible filtration. Suppose $e_3(\mathcal{I})=e_2(\mathcal{I})(e_2(\mathcal{I})-1)$ and $e_k(\mathcal{I})=0$ for $4\leq k \leq d-1$. Then \begin{equation}\label{e-d}
        (-1)^de_d(\mathcal{I})\geq 0.
    \end{equation}
    Furthermore, when equality holds in \eqref{e-d}, then $\depth {G}_{{\mathcal{F}}}(A)\geq d-1$. 
\end{theorem}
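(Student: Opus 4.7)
My plan is to mirror the strategy used in the proof of Lemma \ref{e_d-1 in d dimen}, but now one dimension higher. Choose a superficial element $x_1\in I$ for $\mathcal{I}$ and set $A^\prime=A/(x_1)$, $\mathcal{F}=\wt{\mathcal{I}}$, and $\mathcal{F}^\prime=\{(\wt{I_n}+(x_1))/(x_1)\}_{n\geq 0}$. Because $\mathcal{F}$ is Ratliff-Rush, $x_1^\ast$ is $G_\mathcal{F}(A)$-regular, so $\depth G_\mathcal{F}(A)\geq 1$ and the resulting identity $H(\mathcal{F}^\prime,t)=(1-t)H(\mathcal{F},t)$ yields $e_j(\mathcal{F}^\prime)=e_j(\mathcal{F})=e_j(\mathcal{I})$ for every $j\geq 0$; in particular $e_d(\mathcal{F}^\prime)=e_d(\mathcal{I})$. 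Lemma \ref{e_d-1 in d dimen} then provides $\depth G_{\wt{\mathcal{F}^\prime}}(A^\prime)\geq d-2$.

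With that depth bound in hand, I would extract $e_d(\wt{\mathcal{F}^\prime})$ via local cohomology exactly as in the proof of Lemma \ref{e_d-1 in d dimen}, but now inside the $(d-1)$-dimensional Cohen-Macaulay ring $A^\prime$ rather than the $(d-2)$-dimensional $A^{\prime\prime}$ of that proof. Applying \cite[Proposition 4.4]{bl}, summing from $n=0$ to $m\gg 0$, and simplifying against the paper's formula for $P^2$ yields
\[
e_d(\wt{\mathcal{F}^\prime}) \;=\; \sum_{n\geq 0}\ell\bigl((H^{d-1}_{\mathcal{R}_+}(\mathcal{R}(\wt{\mathcal{F}^\prime})))_{n+1}\bigr)\;\geq\;0.
\]
Combined with \cite[1.5]{Put2} applied in $A^\prime$ this produces
\[
(-1)^d e_d(\mathcal{I}) \;=\; (-1)^d e_d(\wt{\mathcal{F}^\prime})\;+\;\sum_{n\geq 0}\ell\bigl(\wt{\mathcal{F}^\prime_{n+1}}/\mathcal{F}^\prime_{n+1}\bigr).
\]
When $d$ is even the right-hand side is a sum of two non-negative terms, giving \eqref{e-d} at once. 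When $d\geq 5$ is odd, the hypothesis forces $e_{d-1}(\mathcal{I})=e_{d-1}(\wt{\mathcal{F}^\prime})=0$; substituting $n=-1$ into the \cite[Proposition 4.4]{bl} identity gives $\ell\bigl((H^{d-1}_{\mathcal{R}_+}(\mathcal{R}(\wt{\mathcal{F}^\prime})))_0\bigr)=e_{d-1}(\wt{\mathcal{F}^\prime})=0$, and the monotonicity \cite[Lemma 4.7]{bl} then propagates the vanishing to every degree, forcing $e_d(\wt{\mathcal{F}^\prime})=0$ and hence $(-1)^d e_d(\mathcal{I})=\sum\ell(\wt{\mathcal{F}^\prime_{n+1}}/\mathcal{F}^\prime_{n+1})\geq 0$ in the odd case as well.

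For the equality statement, $(-1)^d e_d(\mathcal{I})=0$ forces $\sum\ell(\wt{\mathcal{F}^\prime_{n+1}}/\mathcal{F}^\prime_{n+1})=0$ in either parity, so $\wt{\mathcal{F}^\prime_n}=\mathcal{F}^\prime_n$ for every $n\geq 1$. Therefore $G_{\mathcal{F}^\prime}(A^\prime)=G_{\wt{\mathcal{F}^\prime}}(A^\prime)$ has depth at least $d-2$, and because $x_1^\ast$ is $G_\mathcal{F}(A)$-regular, Sally's machine \cite[Proposition 1.7]{rv} lifts this to $\depth G_\mathcal{F}(A)\geq d-1$, as required. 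The main delicate point I expect is pinning down the sign conventions in the \cite[Proposition 4.4]{bl} identity and making them match cleanly with \cite[1.5]{Put2}; once this is in place, the parity split and the monotonicity argument handle the odd case uniformly, and the remainder of the proof is routine.
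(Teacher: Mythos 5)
Your proposal is correct and follows essentially the same route as the paper's proof: reduce modulo one superficial element using $\depth G_{\mathcal{F}}(A)\geq 1$, invoke Lemma \ref{e_d-1 in d dimen} to get $\depth G_{\wt{\mathcal{F}^\prime}}(A^\prime)\geq d-2$, express $e_d(\wt{\mathcal{F}^\prime})$ as a sum of lengths of graded pieces of $H^{d-1}_{\mathcal{R}_+}(\mathcal{R}(\wt{\mathcal{F}^\prime}))$ via \cite[Proposition 4.4]{bl}, split by the parity of $d$ using \cite[1.5]{Put2} and the monotonicity of \cite[Lemma 4.7]{bl} together with $e_{d-1}(\mathcal{I})=0$ in the odd case, and finish the equality case with Sally's machine. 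No gaps; the sign bookkeeping you flag is exactly what the paper's displayed computations carry out.
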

\begin{proof}
    For convenience of notation, we write $\mathcal{F}=\{ \wt{I_n}\}$. Let $x_1\in I$ be a superficial element for $I$. Then $x_1$ is also a superficial element for the filtration $\mathcal F$. Suppose $J=(x_1,\ldots,x_d)$ is a minimal reduction of $\mathcal{I}$. Let $A^\prime=A/(x_1)$ and ${\mathcal F}^\prime=\Big\{{\mathcal F}^\prime_n=\frac{\widetilde{I_n}+(x_1)}{(x_1)}\Big\}_{n\geq 0}$. Since $\depth G_\mathcal{F}(A)\geq 1$, hence by Singh's formula, 
    we get $e_d(\mathcal{I})=e_d(\mathcal{F})=e_d(\mathcal{F}^\prime).$
 By \cite[1.5]{Put2}, 
\begin{equation}\label{e5}
(-1)^{d}\Big( e_d(\mathcal{I})-e_d( \wt{\mathcal {F}^\prime)}\Big)=
 \sum_{n\geq 0} \ell\left (\frac{\widetilde{{\mathcal F}^\prime_{n+1}}}{{\mathcal F}^\prime_{n+1}}\right )\geq 0.
\end{equation}
By Lemma \ref{e_d-1 in d dimen}, $\depth G_{\wt{\mathcal{F}^\prime}}(A^\prime)\geq d-2$. Hence using \cite[Proposition 4.4]{bl},  we have
for all $n\geq -1$,
\begin{equation} \label{diff in 5 dim}
P_{\widetilde{{\mathcal F}^\prime}}(n)-H_{\widetilde{{{\mathcal F}^\prime}}} (n)=(-1)^{d-1}\ell((H^{d-1}_{\mathcal R_+}(\mathcal R(\widetilde{{ {\mathcal F}^\prime}})))_{n+1}). 
\end{equation}
Now, taking the sum for $m\gg 0$ on both sides of the above equation, we get
\begin{align}\label{25}
(-1)^{d-1}\sum_{n=0}^m \ell((H^{d-1}_{\mathcal R_+}(\mathcal R(\widetilde{{ {\mathcal F}^\prime}})))_{n+1})&= \sum_{n=0}^m P_{\widetilde{{\mathcal F}^\prime}}(n)-\sum_{n=0}^m H_{\widetilde{{{\mathcal F}^\prime}}} (n)\nonumber\\
\begin{split}
&= \sum_{n=0}^m P_{\widetilde{{\mathcal F}^\prime}}(n)- H^2_{\widetilde{{{\mathcal F}^\prime}}} (m)
\\
&= \sum_{i=0}^{d-1} (-1)^i e_i(\wt{\mathcal{F}^\prime}){m+d-i \choose d-i}- P^2_{\widetilde{{{\mathcal F}^\prime}}} (m)\\
&=(-1)^{d+1}e_d(\wt{\mathcal {F}^\prime}).
\end{split}
\end{align}
This gives that $e_d(\wt{\mathcal {F}^\prime})\geq 0.$  Since by \eqref{e5}, $e_d(\wt{\mathcal{F}^\prime}) \leq e_d(\mathcal{I})$, therefore $e_d(\mathcal{I})\geq 0$ if $d$ is even.
Suppose $d$ is odd. Then since $A^\prime$ is a $(d-1)$-dimensional Cohen-Macaulay local ring,  we have $$\ell((H^{d-1}_{\mathcal R_+}(\mathcal R(\widetilde{{ {\mathcal F}^\prime}})))_n)\leq \ell((H^{d-1}_{\mathcal R_+}(\mathcal R(\widetilde{{ {\mathcal F}^\prime}})))_{n-1})$$ for all $n\in \mathbb Z$ by \cite[Lemma 4.7]{bl}. Now in equation \eqref{diff in 5 dim}, we substitute $n=-1$ to get $$\ell((H^{d-1}_{\mathcal R_+}(\mathcal R(\widetilde{{ {\mathcal F}^\prime}})))_0)=e_{d-1}(\wt {\mathcal {F}^\prime})=e_{d-1}({\mathcal F}^\prime)=e_{d-1}(\mathcal F)=e_{d-1}(\mathcal{I}).$$
 Thus,
\begin{equation}
e_d( \wt{\mathcal {F}^\prime})=\sum_{n=0}^m \ell((H^{d-1}_{\mathcal R_+}(\mathcal R(\widetilde{{ {\mathcal F}^\prime}})))_{n+1})\leq\sum_{n=0}^m  \ell((H^{d-1}_{\mathcal R_+}(\mathcal R(\widetilde{{ {\mathcal F}^\prime}})))_0)=0\nonumber
\end{equation}

 Since it is given that $e_{d-1}(\mathcal{I})=0$, we get $e_d(\wt{\mathcal{F}^\prime})= 0.$ Hence by \eqref{e5}, we get $e_d(\mathcal{I})\leq e_d(\wt{\mathcal{F}^\prime})= 0.$
Now, when $e_d(\mathcal{I})=0$, then by \eqref{e5}, we get $\wt{\mathcal{F}_{n+1}^\prime}=\mathcal{F}_{n+1}^\prime$ for $n \geq 0.$ So, by Lemma \ref{e_d-1 in d dimen}, $\depth G_{\mathcal{F}^\prime}(A^\prime)= \depth G_{\wt{\mathcal{F}^\prime}}(A^\prime)\geq d-2$. 
Therefore by Sally's machine \cite[Lemma 1.4]{rv}, we get $\depth G_\mathcal{F}(A)\geq d-1.$
\end{proof}
By the standard technique of going modulo a superficial sequence of appropriate length and using the previous theorem, we get the following corollary. 
\begin{corollary}\label{cnseq of eqa of e3}
    Let $(A,\m)$ be a Cohen-Macaulay local ring of dimension $d\geq 4.$ Suppose $e_3(\mathcal{F})=e_2(\mathcal{F})(e_2(\mathcal{F})-1)$, then $e_4(\mathcal{F})\geq 0.$ Furthermore, if $e_k(\mathcal{F})=0$ for $4\leq k\leq i-1\leq d-1$, then $(-1)^ie_i(\mathcal{F})\geq 0.$
\end{corollary}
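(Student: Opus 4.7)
The plan is to derive the corollary directly from Theorem \ref{signature of ed} by cutting down with a superficial sequence of appropriate length. Given $4 \leq i \leq d$, I would choose a superficial sequence $x_1,\ldots,x_{d-i}$ for $\mathcal{F}$ in $A$ and pass to the Cohen--Macaulay local ring $\overline{A}=A/(x_1,\ldots,x_{d-i})$ of dimension $i$, equipped with the induced filtration $\overline{\mathcal{F}}=\Big\{\dfrac{\wt{I_n}+(x_1,\ldots,x_{d-i})}{(x_1,\ldots,x_{d-i})}\Big\}_{n\geq 0}$.

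The first step is to verify that the Hilbert coefficients are preserved in the relevant range under this reduction. A single superficial element $x$ for an $I$-admissible filtration $\mathcal{F}$ satisfies $(\mathcal{F}_n:x)=\mathcal{F}_{n-1}$ for $n\gg 0$, so $H_{\mathcal{F}'}(n)=H_{\mathcal{F}}(n)-H_{\mathcal{F}}(n-1)$ for $n\gg 0$, where $\mathcal{F}'$ is the image filtration in $A/(x)$. Comparing the corresponding Hilbert polynomials term by term yields $e_j(\mathcal{F}')=e_j(\mathcal{F})$ for $0\leq j\leq d-1$. Iterating this $d-i$ times, I obtain $e_j(\overline{\mathcal{F}})=e_j(\mathcal{F})$ for all $0\leq j\leq i$. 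In particular, the hypotheses of the corollary imply that $\overline{\mathcal{F}}$ satisfies $e_3(\overline{\mathcal{F}})=e_2(\overline{\mathcal{F}})(e_2(\overline{\mathcal{F}})-1)$ and, when $i\geq 5$, also $e_k(\overline{\mathcal{F}})=0$ for $4\leq k\leq i-1$.

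The second step is to apply Theorem \ref{signature of ed} to $\overline{\mathcal{F}}$ in dimension $i$. When $i=4$, the vanishing condition $4\leq k\leq i-1$ is vacuous, which handles the first assertion; for $i\geq 5$ the additional vanishing hypothesis is precisely what is supplied. The theorem then gives $(-1)^i e_i(\overline{\mathcal{F}})\geq 0$, which by coefficient preservation is the same as $(-1)^i e_i(\mathcal{F})\geq 0$, as desired. I do not expect any real obstacle here; the only delicate point is to confirm the polynomial identity $P_{\mathcal{F}'}(n)=P_{\mathcal{F}}(n)-P_{\mathcal{F}}(n-1)$ for a general $I$-admissible filtration (not merely the $I$-adic one), but this rests solely on the superficiality of $x$ and is already implicit in the paper's preceding arguments.
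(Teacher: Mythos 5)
Your proposal is correct and matches the paper's own (one-line) proof: the paper likewise obtains the corollary "by the standard technique of going modulo a superficial sequence of appropriate length" and then applying Theorem \ref{signature of ed} in dimension $i$, with the preservation of $e_0,\ldots,e_i$ under cutting by $d-i$ superficial elements being the only point to check.
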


\begin{example}
 Let $A= k[[x,y,z,w]]$ and $I=(x^2-y^2, y^2-z^2, z^2-w^2, xy,yz,xz)$. Then, the Hilbert series of $I$ is 
    $$
    H(I,t)=\frac{10+12t^2-8t^3+2t^4}{(1-t)^4}.
    $$
    Here $e_0(I)=16,e_1(I)=8,e_2(I)=0,e_3(I)=0,e_4(I)=2$. Hence $e_3(I)=e_2(I)(e_2(I)-1)$ and $e_4(I)> 0$. 
    \end{example}

Now we state our results for integrally closed ideals.
\begin{theorem}\label{sig ed inte}
    Let $(A,\m)$ be a $d\geq4$ dimensional Cohen-Macaulay local ring and $I$ an $\m$- primary integrally closed ideal. If $e_3(I)=(e_2(I)-e_1(I)+e_0(I)-\ell (A/I))e_2(I)$ and $e_k(I)=0$ for $4\leq k\leq d-1$ then $(-1)^de_d(I)\geq0$. Furthermore, when equality occurs, then $\depth G_\mathcal{F}(A)\geq d-1$, where $\mathcal{F}=\{\wt{I^n}\}_{n\geq0}.$
     \end{theorem}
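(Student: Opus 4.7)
The plan is to mirror the proof of Theorem \ref{signature of ed}, with two substitutions: first, the bound $\wt{r}(\mathcal{F}^\prime) \leq e_2(\mathcal{F}^\prime) - 1$ from \cite[Theorem 2.2]{ms} is replaced by the sharper integrally closed bound $\wt{r}(\mathcal{F}^\prime) \leq e_2(\mathcal{F}^\prime) - e_1(\mathcal{F}^\prime) + e_0(\mathcal{F}^\prime) - \ell(A/I) + 2$ from \cite[Lemma 3.8]{sy}; and second, the auxiliary depth statement of Lemma \ref{e_d-1 in d dimen} is replaced by its integrally closed analog, with base case supplied by the integrally closed half of Theorem \ref{e3_m-primary}. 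Throughout, $x_1 \in I$ is chosen so that $I^\prime = (I + (x_1))/(x_1)$ remains integrally closed in $A^\prime = A/(x_1)$, which is available for a generic superficial element in the $\m$-primary integrally closed setting.

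The first step is to establish the integrally closed version of Lemma \ref{e_d-1 in d dimen}: under the hypotheses of the theorem one has $\depth G_{\wt{\mathcal{F}^\prime}}(A^\prime) \geq d-2$, where $\mathcal{F}^\prime = \{(\wt{I^n} + (x_1))/(x_1)\}_{n \geq 0}$. The base case $d=4$ is immediate: since the Ratliff-Rush filtration satisfies $\depth G_{\mathcal{F}}(A) \geq 1$, Singh's formula preserves $e_i$ for $0 \leq i \leq 3$ modulo $x_1$ and $\ell(A/I) = \ell(A^\prime/I^\prime)$, so $I^\prime$ inherits the boundary equality, and Theorem \ref{e3_m-primary} applied in the 3-dimensional ring $A^\prime$ gives $\depth G_{\wt{\mathcal{F}^\prime}}(A^\prime) \geq 2$. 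For $d \geq 5$ the induction runs exactly as in Lemma \ref{e_d-1 in d dimen}: pass to $A^{\prime\prime} = A^\prime/(x_2)$, invoke the inductive hypothesis there, and combine \cite[Proposition 4.4]{bl}, the monotonicity of top local cohomology \cite[Lemma 4.7]{bl}, and Puthenpurakal's formula \cite[1.5]{Put2} to force $e_{d-1}(\wt{\mathcal{J}^\prime}) = 0$; Sally's machine then lifts this to $\depth G_{\wt{\mathcal{F}^\prime}}(A^\prime) \geq d-2$.

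With the depth lemma in hand, the remainder copies Theorem \ref{signature of ed}. Singh's formula gives $e_d(I) = e_d(\mathcal{F}^\prime)$, and \cite[1.5]{Put2} yields $(-1)^d(e_d(I) - e_d(\wt{\mathcal{F}^\prime})) \geq 0$. Since $\depth G_{\wt{\mathcal{F}^\prime}}(A^\prime) \geq d-2$ in the $(d-1)$-dimensional ring $A^\prime$, \cite[Proposition 4.4]{bl} gives $P_{\wt{\mathcal{F}^\prime}}(n) - H_{\wt{\mathcal{F}^\prime}}(n) = (-1)^{d-1} \ell((H^{d-1}_{\R_+}(\R(\wt{\mathcal{F}^\prime})))_{n+1})$, and summing over $n \geq 0$ produces $e_d(\wt{\mathcal{F}^\prime}) \geq 0$. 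A parity split completes the sign: for even $d$ combine with $e_d(\wt{\mathcal{F}^\prime}) \leq e_d(I)$; for odd $d$ use \cite[Lemma 4.7]{bl} at $n = -1$ together with $e_{d-1}(\wt{\mathcal{F}^\prime}) = e_{d-1}(I) = 0$ to force $e_d(\wt{\mathcal{F}^\prime}) = 0$, whence $-e_d(I) \geq 0$. When equality holds in $(-1)^d e_d(I) \geq 0$, the chain of inequalities forces $\wt{\mathcal{F}^\prime_{n+1}} = \mathcal{F}^\prime_{n+1}$ for all $n \geq 0$, so $\depth G_{\mathcal{F}^\prime}(A^\prime) \geq d-2$, and Sally's machine \cite[Lemma 1.4]{rv} upgrades this to $\depth G_{\mathcal{F}}(A) \geq d-1$.

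The principal obstacle is the parity bookkeeping inside the depth lemma, where one must carefully track the direction of the inequality $\ell((H^{d-2}_{\R_+}(\R(\wt{\mathcal{J}^\prime})))_n) \leq \ell((H^{d-2}_{\R_+}(\R(\wt{\mathcal{J}^\prime})))_{n-1})$ from \cite[Lemma 4.7]{bl} in both even and odd $d$; this is handled in Lemma \ref{e_d-1 in d dimen} and carries over verbatim, so the only genuinely new input is the substitution of the integrally closed reduction number bound from \cite[Lemma 3.8]{sy}.
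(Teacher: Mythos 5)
Your proposal is correct and follows essentially the same route as the paper: the paper likewise reduces Theorem \ref{sig ed inte} to an integrally closed analogue of Lemma \ref{e_d-1 in d dimen} (its Lemma \ref{e_d-1 for integrally closed}), whose base case $d=4$ rests on the reduction-number bound of \cite[Lemma 3.8]{sy} and whose induction step repeats the earlier lemma, and then runs the argument of Theorem \ref{signature of ed} verbatim, including the even/odd parity split and the Sally machine step in the equality case. The only cosmetic difference is that you invoke the $d=3$ integrally closed equality case of Theorem \ref{e3_m-primary} directly for $I^\prime$ in $A^\prime$ (using $\wt{\mathcal{F}^\prime_n}=\wt{I^{\prime n}}$), whereas the paper re-derives that step by passing to $A^{\prime\prime}$; these are the same computation.
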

     \begin{proof}
     It follows exactly as the proof of Theorem \ref{signature of ed} using the next lemma which is analogous to Lemma \ref{e_d-1 in d dimen}. 
     \end{proof}
     Though the proof of the next lemma is similar to that of Lemma \ref{e_d-1 in d dimen} but for the base case it requires a bound on the reduction number $\wt{r}_J(\mathcal{I})$ in dimension two given by authors in \cite[Lemma 3.8]{sy}. 
\begin{lemma}\label{e_d-1 for integrally closed}  
    Let $(A,\m)$ be a $d\geq 4$-dimensional Cohen-Macaulay local ring, $I$ an $\m$-primary integrally closed ideal. If $e_3(I)=e_2(I)(e_2(I)-e_1(I)+e_0(I)-\ell(A/I))$ and $e_k(I)=0$ for $4\leq k \leq d-1$ then $\depth {G}_{\wt{\mathcal{F}^\prime}}(A^\prime)\geq d-2$ where $\mathcal{F}= \{\wt{I^n}\}_{n\geq 0}$ and ${\mathcal{F}^\prime}=\Big\{\frac{\wt{I^n}+(x_1)}{(x_1)}\Big\}_{n\geq 0}$ for a superficial element $x_1$ of $I.$
\end{lemma}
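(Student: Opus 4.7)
The plan is to mirror the proof of Lemma \ref{e_d-1 in d dimen} by induction on $d$, with the modification that at every stage the superficial element is chosen so that the resulting quotient ideal remains integrally closed (such choices are possible since $I$ is $\m$-primary and integrally closed; a sufficiently generic superficial element $x_1 \in I \setminus \m I$ will make $I/(x_1)$ integrally closed in $A/(x_1)$). The only genuinely new ingredient compared to Lemma \ref{e_d-1 in d dimen} lies in the base case, where the integrally closed form of Theorem \ref{e3_m-primary} (which in turn rests on the bound for $\wt{r}_J(\mathcal{I})$ from \cite[Lemma 3.8]{sy}) replaces the general one.

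For the base case $d=4$, pick a superficial element $x_1$ for $I$ such that $I^\prime := I/(x_1)$ is integrally closed in $A^\prime := A/(x_1)$, a $3$-dimensional Cohen-Macaulay local ring. Since $\depth G_{\mathcal{F}}(A)\geq 1$, by Singh's formula $e_i(I^\prime)=e_i(I)$ for $0\leq i\leq 3$, and $\ell(A^\prime/I^\prime)=\ell(A/I)$. Consequently the hypothesis transfers: $e_3(I^\prime)=e_2(I^\prime)(e_2(I^\prime)-e_1(I^\prime)+e_0(I^\prime)-\ell(A^\prime/I^\prime))$. Since the Ratliff-Rush closures of $\mathcal{F}^\prime$ and of $\{(I^\prime)^n\}$ agree pointwise (they coincide for $n\gg 0$, hence for all $n$ by the definition of Ratliff-Rush closure), the equality case in \eqref{e3 integrally closed} of Theorem \ref{e3_m-primary}, applied to the $3$-dimensional ring $A^\prime$ with ideal $I^\prime$, yields $\depth G_{\wt{\mathcal F^\prime}}(A^\prime)\geq 2=d-2$.

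For the inductive step with $d\geq 5$, assume the result in dimension $d-1$ and pick superficial elements $x_1,x_2$ for $I$ so that the image of $I$ in $A^{\prime\prime}:=A/(x_1,x_2)$ is still integrally closed. Set $\mathcal{J}=\wt{\mathcal{F}^\prime}$ and $\mathcal{J}^\prime=\{(\mathcal{J}_n+(x_2))/(x_2)\}_{n\geq 0}$. Applying the induction hypothesis to $A^\prime$ (where the integrally closed hypothesis and the vanishings still hold for $I/(x_1)$ because $\depth G_{\mathcal{F}}(A)\geq 1$ preserves Hilbert coefficients in the relevant range) gives $\depth G_{\wt{\mathcal{J}^\prime}}(A^{\prime\prime})\geq d-3$. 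Then, repeating verbatim the argument of Lemma \ref{e_d-1 in d dimen}: Singh's formula together with \cite[1.5]{Put2} combined with the vanishing $e_{d-1}(I)=0$ yields $(-1)^d {e}_{d-1}(\wt{\mathcal J^\prime})\geq 0$; meanwhile the local cohomology analysis via \cite[Proposition 4.4]{bl} and \cite[Lemma 4.7]{bl}, using $\depth G_{\wt{\mathcal J^\prime}}(A^{\prime\prime})\geq d-3$ and the vanishing $e_{d-2}(I)=0$, gives the reverse inequality. Therefore ${e}_{d-1}(\wt{\mathcal J^\prime})=0$, which forces $\wt{\mathcal J^\prime_{n+1}}=\mathcal J^\prime_{n+1}$ for all $n\geq 0$, whence $\depth G_{\mathcal J^\prime}(A^{\prime\prime})\geq d-3$. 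Sally's machine \cite[Proposition 1.7]{rv} then upgrades this to $\depth G_{\mathcal J}(A^\prime)=\depth G_{\wt{\mathcal F^\prime}}(A^\prime)\geq d-2$.

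The main obstacle I expect is bookkeeping rather than new mathematics: one must verify carefully that in each quotient the integrally closed hypothesis persists and that the coefficients $e_i$ and the various auxiliary filtrations ($\mathcal F^\prime$ vs.\ $\{(I/(x_1))^n\}$, and their Ratliff--Rush closures) are genuinely the same objects up to the desired index, so that Theorem \ref{e3_m-primary} and the formulas from \cite[1.5]{Put2} and \cite[Proposition 4.4, Lemma 4.7]{bl} can be legally invoked. Once this bookkeeping is in place the argument is structurally identical to the proof of Lemma \ref{e_d-1 in d dimen}, with the integrally closed bound replacing the general one only at the base of the induction.
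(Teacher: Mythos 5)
Your proposal is correct and follows essentially the same route as the paper: induction on $d$ with superficial elements chosen so that the quotient ideals stay integrally closed, the integrally closed bound entering only at the base, and the inductive step repeating the argument of Lemma \ref{e_d-1 in d dimen}. The only (harmless) difference is in the base case $d=4$: you descend one step and invoke the $d=3$ equality case of Theorem \ref{e3_m-primary} on $A/(x_1)$ (using the correct observation that $\wt{\mathcal F'}$ and $\{\wt{(I')^n}\}$ coincide since the filtrations agree in large degrees), whereas the paper descends to $A/(x_1,x_2)$ and reruns the computation with \cite[Lemma 3.8]{sy} directly — the two are the same argument packaged differently.
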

 \begin{proof} We use induction on $d.$ First, assume $d=4.$ Let $x_1,x_2\in I$ be a superficial sequence for $I$ such that $I/(x_1)$ and $I/(x_1,x_2)$ are  integrally closed ideals in $A/(x_1)$ and $A/(x_1,x_2)$ respectively. Suppose $J=(x_1,x_2,x_3,x_4)$ is a minimal reduction of $I$. As earlier, set $\mathcal{F}=\{\wt{I^n}\}_{n \geq 0}$, $\mathcal{F}^\prime=\Big\{\frac{\wt{I^n}+(x_1)}{(x_1)}\Big\}_{n\geq 0}$, $\mathcal{J}=\{\wt{\mathcal{F}^\prime}\}$ and $\mathcal{J}^\prime=\Big\{\frac{\mathcal{F}^\prime_n+(x_2)}{(x_2)}\Big\}_{n\geq 0}$. Since $\depth G_{\mathcal{J}}(A^\prime)\geq 1,$ hence by Singh's formula, we get $e_3(I)=e_3(\mathcal{F})=e_3(\mathcal{J})=e_3(\mathcal{J}^\prime)$. Further, by \cite[1.5]{Put2}, we have 
\begin{equation}\label{e_3 integrally}
    e_3(\wt{\mathcal{J}^\prime})-e_3(\mathcal{J}^\prime)=\sum_{n\geq 0}\ell\Big(\frac{\wt{\mathcal{J}^\prime_{n+1}}}{\mathcal{J}^\prime_{n+1}}\Big)\geq 0.
\end{equation}
Then by the similar calculations as  done in the proof of Theorem \ref{e3_m-primary} for obtaining the bound in \eqref{e3tilde}, we get, 
\begin{align*}  e_3(I)=e_3(\mathcal{F})=e_3(\mathcal{J}^\prime)\leq e_3(\wt{\mathcal{J}^\prime})\leq (\wt{r}(\mathcal{J}^\prime)-2)e_2(I).  
\end{align*}
For the filtration $\mathcal{J}^\prime$ in $A/(x_1,x_2),$ the ideal $\mathcal{J}^\prime_1=I/(x_1,x_2)$ is integrally closed.  Hence, by \cite[Lemma 3.8]{sy} we get,
\begin{align*}
 e_3(I)=e_3(\mathcal{J}^\prime)\leq e_3(\wt{\mathcal{J}^\prime})&\leq e_2(\mathcal{J}^\prime)(e_2(\mathcal{J}^\prime)-e_1(\mathcal{J}^\prime)+e_0(\mathcal{J}^\prime)-\ell(A/I))\\
&= e_2(I)(e_2(I)-e_1(I)+e_0(I)-\ell(A/I)).   
\end{align*}
 Since it is given that $e_3(I)=e_2(I)(e_2(I)-e_1(I)+e_0(I)-\ell(A/I))$. Therefore from \eqref{e_3 integrally}, we have $\wt{\mathcal{J}^\prime_{n+1}}=\mathcal{J}^\prime_{n+1}$ which gives that, $\depth G_{\mathcal{J}^\prime}(A^{\prime\prime})\geq 1$. Hence by Sally's Machine, we get $\depth G_{\wt{\mathcal{F}^\prime}}(A^\prime)\geq 2.$

Now assume $\dim A=d$ and the result holds for $d-1$. Suppose $J=(x_1,\ldots,x_d)$ is a minimal reduction of $I$. By induction hypothesis, \begin{equation}
    \depth G_{\wt{\mathcal{J}^\prime}}(A^{\prime\prime})\geq d-3
\end{equation}
where $\mathcal{J}=\wt{\mathcal{F}^\prime}$ and $\mathcal{J}^\prime=\Big\{\frac{\mathcal{J}_n+(x_2)}{(x_2)}\Big\}_{n \geq 0}$. By the similar arguments as in Lemma 
 \ref{e_d-1 in d dimen}, we conclude that $\depth G_{\wt{\mathcal{F}^\prime}}(A^\prime)\geq d-2.$\end{proof}
We also obtain the following corollary. 
\begin{corollary}\label{cnseq of eqa of e3 integrally closed}
    Let $(A,\m)$ be a Cohen-Macaulay local ring of dimension $d\geq 4$ and $I$ an $\m$-primary integrally closed ideal. Suppose $e_3(I)=e_2(I)(e_2(I)-e_1(I)+e_0(I)-\ell(A/I))$, then $e_4(I)\geq 0.$ Furthermore, if $e_k(I)=0$ for $4\leq k\leq i-1\leq d-1$, then $(-1)^ie_i(I)\geq 0.$
\end{corollary}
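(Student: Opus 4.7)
My plan is to mirror the proof of Corollary \ref{cnseq of eqa of e3}, replacing Theorem \ref{signature of ed} by its integrally closed counterpart Theorem \ref{sig ed inte}. The strategy is to reduce, via a superficial sequence that preserves integral closure at each stage, to a Cohen-Macaulay local ring of the appropriate smaller dimension, and then quote Theorem \ref{sig ed inte} verbatim.

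Since $I$ is $\m$-primary and integrally closed with $R/\m$ infinite, I would invoke Itoh's result \cite{itoh} inductively to extract a superficial sequence $x_1, \ldots, x_{d-s} \in I$ so that the image of $I$ in $A/(x_1, \ldots, x_j)$ is integrally closed for every $0 \leq j \leq d - s$; here $s = 4$ for the first assertion and $s = i$ for the second. Setting $\ov{A} = A/(x_1, \ldots, x_{d-s})$ and $\ov{I}$ for the image of $I$ in $\ov{A}$, one obtains an $\ov{\m}$-primary integrally closed ideal in a Cohen-Macaulay local ring of dimension $s$. The standard superficial-reduction formulas applied $d - s$ times guarantee $e_k(\ov{I}) = e_k(I)$ for $0 \leq k \leq s$ (this is the same preservation used in the proofs of Theorem \ref{e3_m-primary} and Theorem \ref{sig ed inte}), together with the trivial identity $\ell(\ov{A}/\ov{I}) = \ell(A/I)$.

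With the reduction in place, the two assertions follow by plugging directly into Theorem \ref{sig ed inte}. For the first assertion ($s = 4$) the hypothesis on $e_3$ transfers to $\ov{I}$, while the extra requirement that $e_k = 0$ for $4 \leq k \leq 3$ in Theorem \ref{sig ed inte} is vacuous; hence $(-1)^4 e_4(\ov{I}) = e_4(I) \geq 0$. For the second assertion ($s = i$) both the $e_3$-equality and the vanishings $e_k(I) = 0$ for $4 \leq k \leq i - 1$ transfer to $\ov{I}$, so Theorem \ref{sig ed inte} applied in dimension $i$ yields $(-1)^i e_i(I) = (-1)^i e_i(\ov{I}) \geq 0$.

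The only delicate step is the construction of a superficial sequence preserving integral closure of length $d - s$, which is handled by an iterated application of Itoh's result — the same device already used implicitly in the proofs of Theorem \ref{rrf} and Lemma \ref{e_d-1 for integrally closed}. Apart from this, the argument is routine bookkeeping with Hilbert coefficients under superficial reduction.
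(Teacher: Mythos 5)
Your proposal is correct and follows essentially the same route as the paper: the paper's proof is exactly to choose a superficial sequence $x_1,\ldots,x_{d-i}$ for $I$ whose successive quotients keep the image of $I$ integrally closed, and then invoke Theorem \ref{sig ed inte} in the reduced dimension, using the invariance of $e_0,\ldots,e_i$ under superficial reduction. Your additional remarks — that the vanishing hypothesis of Theorem \ref{sig ed inte} becomes vacuous in the base case $s=4$, and that the integral-closure-preserving sequence is the same device used in Theorem \ref{rrf} and Lemma \ref{e_d-1 for integrally closed} — are accurate and merely make explicit what the paper leaves implicit.
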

 \begin{proof}
Let $x_1,\ldots,x_{d-i}$ be a superficial sequence for $I$ such that $I/(x_1,\ldots,x_{d-i})$ remains integrally closed in $A/(x_1,\ldots,x_{d-i}).$ Now it follows from Theorem \ref{sig ed inte}.
\end{proof}

We refer to Example \ref{eg1} where all the conditions for the above corollary are satisfied. Hence $\depth G_\mathcal{F}(A)\geq d-1 $, with $\mathcal{F}=\{\wt{m^n}\}_{n\geq 0.}$


\section{application}\label{section-5}
In this section, we give some interesting applications of our theorem. In \cite[Theorem 4.3]{nq}, Cleto and Queiroz proved that when $\wt{I^{r_J(I)}}=I^{r_J(I)}$ and the Ratliff-Rush filtration of $I$ behaves well mod superficial sequence of length $d-2$, then $\reg G_I(A)=r_J(I)$ and asked a question \cite[Question 4.4]{nq} whether one can  get the same conclusion assuming only  the equality $\wt{I^{r_J(I)}}=I^{r_J(I)}$. In  Theorem \ref{cleto3}, we give a partial answer of their question. First we prove a bound for $\wt{r}_J(\mathcal{I})$ and the stability index $\rho(I)$ which partially answers the questions raised in \cite{rs}.
\begin{theorem}\label{Theorem5.1}
Let $(A, \mathfrak{m})$ be a Cohen-Macaulay local ring of dimension $d \geq 3$, $I$ an $\mathfrak{m}$-primary ideal and $\mathcal{I}$ an $I$-admissible filtration. If either of the following two conditions holds:
 \renewcommand{\labelenumi}{(\roman{enumi})}
\begin{enumerate}
    \item $e_3(\mathcal{I}) = e_2(\mathcal{I})(e_2(\mathcal{I}) - 1)$,
    \item $\mathcal{I}=\{I^n\}_{n\geq 0}$, $I$ is integrally closed and $e_3(I) = e_2(I)(e_2(I) - e_1(I) + e_0(I) + \ell(A/I))$,
\end{enumerate}
and $e_k(\mathcal{I}) = 0$ for $4 \leq k \leq d$.
Then the following holds:
 \renewcommand{\labelenumi}{(\roman{enumi})}
 \begin{enumerate}
        \item \label{Part1}$\wt{r}_J(\mathcal{I})\leq r_J(\mathcal{I}).$
        \item \label{Part2} If $\wt{I_{r}}=I_{r}$ for some $r \geq r_J(\mathcal{I})$, then $\rho(\mathcal{I})\leq r.$ Furthermore, 
 \begin{eqnarray*}
            \rho(\mathcal{I})\leq \begin{cases}
                r_J(\mathcal{I}) &\text{ if } \wt{I_{r_J(\mathcal{I})}}=I_{r_J(I)}\\
                r_J(\mathcal{I})+(-1)^{d+1}(e_{d+1}(\mathcal{I})-e_{d+1}(\wt{\mathcal{I}})) &\text{ otherwise. } 
            \end{cases}
        \end{eqnarray*}
        
    \end{enumerate}
    \end{theorem}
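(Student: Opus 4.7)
\emph{Plan.} The strategy is to derive a high-depth conclusion for the Ratliff-Rush associated graded ring, reduce to the two-dimensional case via a superficial sequence along which good behaviour holds, and invoke the dimension-two bound \cite[Proposition 2.1]{ms}; the stability-index assertion is then obtained from the algebraic structure of the finite-length modules $\wt{I_n}/I_n$. In either case (i) or (ii), the assumptions include $e_d(\mathcal{I})=0$ together with the boundary equality for $e_3$ and $e_k(\mathcal{I})=0$ for $4\le k\le d-1$, placing us in the equality case of Theorem \ref{signature of ed} (resp.\ Theorem \ref{sig ed inte}), and hence $\depth G_{\mathcal{F}}(A)\ge d-1$ for $\mathcal{F}=\{\wt{I_n}\}$. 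Iteratively applying the arguments of Lemmas \ref{e_d-1 in d dimen}/\ref{e_d-1 for integrally closed} (passing to $A/(x_1)$ and repeating in dimension $d-1$, and so on) one obtains the additional conclusion that the Ratliff-Rush filtration of $\mathcal{I}$ behaves well modulo a superficial sequence $x_1,\ldots,x_{d-2}$; equivalently, with $A''=A/(x_1,\ldots,x_{d-2})$ and $\mathcal{I}''=\{I_nA''\}$, one has $\wt{\mathcal{I}''}=\{\wt{I_n}A''\}=:\mathcal{F}''$. I expect the main technical obstacle to be precisely this iteration of good behaviour to a length-$(d-2)$ superficial sequence, which requires tracing carefully through the Sally-machine steps that power Theorems \ref{signature of ed}/\ref{sig ed inte}.

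\emph{Part (i).} Since $\depth G_{\mathcal{F}}(A)\ge d-1$ the initial forms $x_1^\ast,\ldots,x_{d-2}^\ast$ form a regular sequence on $G_{\mathcal{F}}(A)$, so reduction numbers descend:
\[
\wt{r}_J(\mathcal{I})=r_J(\mathcal{F})=r_{J''}(\mathcal{F}'')=r_{J''}(\wt{\mathcal{I}''})=\wt{r}_{J''}(\mathcal{I}''),
\]
the third equality using good behaviour. Applying \cite[Proposition 2.1]{ms} to $\mathcal{I}''$ in the two-dimensional Cohen-Macaulay ring $A''$ gives $\wt{r}_{J''}(\mathcal{I}'')\le r_{J''}(\mathcal{I}'')$, and the standard inequality $r_{J''}(\mathcal{I}'')\le r_J(\mathcal{I})$ (reduction numbers only drop modulo superficial elements) closes the chain, yielding $\wt{r}_J(\mathcal{I})\le r_J(\mathcal{I})$.

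\emph{Part (ii).} If $\wt{I_r}=I_r$ with $r\ge r_J(\mathcal{I})$, then $I_k=J^{k-r}I_r$ (by definition of $r_J$) and $\wt{I_k}=J^{k-r}\wt{I_r}$ (using Part (i) and $r\ge r_J(\mathcal{I})\ge \wt{r}_J(\mathcal{I})$) agree, giving $\wt{I_k}=I_k$ for every $k\ge r$ and hence $\rho(\mathcal{I})\le r$; this covers both the first assertion and the first case of the ``furthermore''. Otherwise set $P_n=\wt{I_n}/I_n$; since $\mathcal{I}$ and $\wt{\mathcal{I}}$ share the same Hilbert polynomial, $e_i(\mathcal{I})=e_i(\wt{\mathcal{I}})$ for $0\le i\le d$, and comparing second Hilbert polynomials yields
\[
L:=\sum_{n\ge 1}\ell(P_n)=(-1)^{d+1}\bigl(e_{d+1}(\mathcal{I})-e_{d+1}(\wt{\mathcal{I}})\bigr).
\]
The key observation is that for $n\ge r_J(\mathcal{I})$, Part (i) and the definition of $r_J$ give $\wt{I_{n+1}}=J\wt{I_n}$ and $I_{n+1}=JI_n$, so $P_{n+1}=J\wt{I_n}/JI_n$ vanishes whenever $P_n$ does; iterating, if $P_n=0$ for some $n\ge r_J(\mathcal{I})$ then $P_k=0$ for all $k\ge n$. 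Contrapositively, $\ell(P_n)\ge 1$ for every $r_J(\mathcal{I})\le n\le \rho(\mathcal{I})-1$, giving $L\ge \rho(\mathcal{I})-r_J(\mathcal{I})$, i.e.\ $\rho(\mathcal{I})\le r_J(\mathcal{I})+L$.
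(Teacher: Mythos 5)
Your proof is correct and follows essentially the same route as the paper: both parts rest on the conclusion $\depth G_{\mathcal{F}}(A)\ge d-1$ supplied by Theorems \ref{signature of ed}/\ref{sig ed inte}, descent along a length-$(d-2)$ superficial sequence to dimension two where \cite[Proposition 2.1]{ms} applies, and the same counting of the lengths $\ell(\wt{I_n}/I_n)$ against $(-1)^{d+1}(e_{d+1}(\mathcal{I})-e_{d+1}(\wt{\mathcal{I}}))$ for the stability index. The only cosmetic difference is that the paper transports $\wt{r}_J(\mathcal{I})$ down to dimension two via Marley's Lemma 2.14 rather than through the good-behaviour statement you flag as the main obstacle, which in any case does follow from the depth bound by Sally's machine, exactly as the paper itself argues in the proof of Theorem \ref{cleto3}.
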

 \begin{proof}
  \renewcommand{\labelenumi}{(\roman{enumi})}
     \begin{enumerate}
  \item Let $\underline{x}=x_1,\ldots,x_{d-2}\in I$ be a superficial sequence of $I$. By Theorems \ref{signature of ed} and \ref{sig ed inte}, $\depth G_\mathcal{F}(A)\geq d-1$ which gives $\wt{r}_J(\mathcal{I})=\wt{r}_{J/(\underline{x})}(\mathcal{I}/(\underline{x}))$, see \cite[Lemma 2.14]{tm}. Note that the proof of \cite[Lemma 2.14]{tm} works for filtration as well. By \cite[Proposition 2.1]{ms}, we have $\wt{r}_{J/(\underline{x})}(\mathcal{I}/(\underline{x}))\leq r_{J/(\underline{x})}(\mathcal{I}/(\underline{x}))\leq r_J(\mathcal{I}).$ Therefore, $\wt{r}_J(\mathcal{I})\leq r_J(\mathcal{I}).$
  \item Suppose $\wt{I_r}=I_r$ for some $r\geq r_J(\mathcal{I})$. It is enough to show that $\wt{I_{r+1}}=I_{r+1}.$ Since $\wt{r}_J(\mathcal{I})\leq r_J(\mathcal{I})$. Then $I_{r+1}=JI_r=J\wt{I_r}=\wt{I_{r+1}}.$ Therefore $\rho (\mathcal{I})\leq r.$ For the next part, let $r=r_J(\mathcal{I})$.  Assume 
 $\wt{I_r}\neq I_r$, then $(-1)^{d+1}(e_{d+1}(\mathcal{I})-{e}_{d+1}(\wt{\mathcal{I}}))=\mathop\sum\limits_{n\geq 0}\ell(\wt{I_{n+1}}/I_{n+1})>0.$ If $\rho(\mathcal{I})\leq r_J(\mathcal{I}),$ then we are done. So, we may assume that   $\rho(\mathcal{I})> r_J(\mathcal{I}).$ Then, for all $r\leq n < \rho(\mathcal{I}),$
 $$\wt{I_n}\neq I_n.$$  To see this, suppose $\wt{I_n}=I_n$ for some $n$ with $r\leq n < \rho(\mathcal{I}),$ then by above arguments, we have $\wt{I_t}=I_t$ for all $t\geq n$, which implies $\rho(\mathcal{I})\leq n$, a contradiction. 
 Thus $\ell(\wt{I_{n}}/I_{n})\geq 1$ for $$r\leq n < \rho(\mathcal{I})$$ which gives 
  \begin{eqnarray*}
      \rho(\mathcal{I})-r_J(\mathcal{I}) &\leq& \sum_{n=r_J(\mathcal{I})-1}^{\rho(\mathcal{I})-2}\ell(\wt{I_{n+1}}/I_{n+1})\\
&\leq& \sum_{n=0}^{\rho(\mathcal{I})-2}\ell(\wt{I_{n+1}}/I_{n+1})\\
&=& (-1)^{d+1}(e_{d+1}(\mathcal{I})-{e}_{d+1}(\wt{\mathcal{I}})).
\end{eqnarray*}
\end{enumerate}\end{proof}
 In particular, If $d$ is odd in the above result, then either by Theorem \ref{signature of ed} or by Theorem \ref{sig ed inte}, $\depth G_\mathcal{F}(A)\geq d-1$ which implies  ${e}_{d+1}(\wt{\mathcal{I}})=\displaystyle{\sum_{n\geq d}}\binom{n}{d}\ell(\wt{I_{n+1}}/J\wt{I_n})\geq 0$, see \cite[Theorem 2.5]{rv} for the formula. Therefore, we get $\rho (\mathcal{I})\leq r_J(\mathcal{I})+e_{d+1}(\mathcal{I}).$\\
In the following theorem, we give an affirmative answer of  \cite[Question 4.4]{nq}  under the same hypothesis as in Theorem \ref{Theorem5.1} in case of $I$-adic filtration. For the rest of this section, let $\mathcal{I}=\{I^n\}_{n\geq 0}.$ 
\begin{theorem}\label{cleto3}
Let the hypothesis be same as in Theorem \ref{Theorem5.1}. Suppose $\wt{I^{r_J(I)}}=I^{r_J(I)}$. Then  $$\reg G_I(A)=r_J(I).$$ 
\end{theorem}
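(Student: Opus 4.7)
The approach is to compare $G_I(A)$ with the associated graded ring $G_\mathcal{F}(A)$ of the Ratliff-Rush filtration $\mathcal{F}=\{\wt{I^n}\}_{n\geq 0}$, exploiting that the two graded rings agree beyond a controlled degree. The plan is to establish $\reg G_I(A) \leq r_J(I)$ from this comparison, and to observe that the reverse inequality $r_J(I)\leq \reg G_I(A)$ is a standard consequence of the filter-regular description of the reduction number.

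First, I would invoke Theorem \ref{signature of ed} (under hypothesis (i)) or Theorem \ref{sig ed inte} (under hypothesis (ii)) to obtain $\depth G_\mathcal{F}(A) \geq d-1$. For an $I$-admissible filtration with depth at least $d-1$, the standard identification of Castelnuovo-Mumford regularity with the reduction number gives $\reg G_\mathcal{F}(A) = \wt{r}_J(I)$, and Theorem \ref{Theorem5.1}(i) yields $\wt{r}_J(I) \leq r_J(I)$, so
\[
\reg G_\mathcal{F}(A) \leq r_J(I).
\]

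Next, the hypothesis $\wt{I^{r_J(I)}} = I^{r_J(I)}$ together with Theorem \ref{Theorem5.1}(ii) forces $\rho(I) \leq r_J(I)$, i.e.\ $\wt{I^n} = I^n$ for every $n \geq r_J(I)$. Therefore the natural graded ring homomorphism
\[
\phi : G_I(A) \longrightarrow G_\mathcal{F}(A), \qquad x+I^{n+1} \mapsto x+\wt{I^{n+1}},
\]
is an isomorphism in every degree $n \geq r_J(I)$. Consequently $K := \ker\phi$ and $C := \coker\phi$ are graded modules of finite length supported in degrees strictly less than $r_J(I)$, so $\reg K,\,\reg C \leq r_J(I)-1$. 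Writing $N$ for the image of $\phi$ and splitting $\phi$ as
\[
0 \to K \to G_I(A) \to N \to 0,\qquad 0 \to N \to G_\mathcal{F}(A) \to C \to 0,
\]
the usual regularity estimates for short exact sequences yield
\[
\reg N \leq \max\{\reg G_\mathcal{F}(A),\;\reg C+1\} \leq r_J(I),\qquad \reg G_I(A) \leq \max\{\reg K,\;\reg N\} \leq r_J(I).
\]

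For the reverse inequality $r_J(I) \leq \reg G_I(A)$, I would use the standard argument: if $\underline{x}=x_1,\ldots,x_d$ generates $J$ as a superficial sequence (hence is filter-regular on $G_I(A)$), then the top nonzero graded piece of $G_I(A)/\underline{x}G_I(A)$ sits in degree $r_J(I)$ by the definition of $r_J(I)$ and Nakayama, and the usual filter-regular comparison gives $r_J(I) \leq \reg G_I(A)$. The main obstacle is routing the comparison cleanly through the two short exact sequences above; the crucial input is the depth bound $\depth G_\mathcal{F}(A) \geq d-1$, which is precisely where the boundary condition on $e_3(I)$ and the vanishing of $e_k(I)$ for $k \geq 4$ enter.
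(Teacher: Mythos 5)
Your argument is correct, but it follows a genuinely different route from the paper. The paper's proof also starts from $\depth G_{\mathcal F}(A)\geq d-1$ (via Theorem \ref{signature of ed} or \ref{sig ed inte}), but then uses Sally's machine to get $\depth G_{\mathcal F A_j}(A_j)\geq 1$ for each quotient $A_j=A/(x_1,\ldots,x_{j-1})$ along a superficial sequence of length $d-2$; this makes the filtration $\mathcal F A_j$ Ratliff--Rush closed, whence $\wt{I^rA_j}=I^rA_j$ for all $j$, and the conclusion is then read off directly from Miranda-Neto--Queiroz \cite[Theorem 4.3]{nq}. You instead give an essentially self-contained argument: you compare $G_I(A)$ with $G_{\mathcal F}(A)$ through the natural map $\phi$, use Theorem \ref{Theorem5.1}(ii) to see that $\ker\phi$ and $\coker\phi$ live in degrees $<r_J(I)$, and transfer the bound $\reg G_{\mathcal F}(A)\leq \wt r_J(I)\leq r_J(I)$ across the two short exact sequences, finishing with Trung's inequality $r_J(I)\leq\reg G_I(A)$. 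What the paper's route buys is brevity, since all regularity bookkeeping is delegated to the cited theorem; what your route buys is independence from \cite[Theorem 4.3]{nq} and a transparent mechanism for why the hypothesis $\wt{I^{r_J(I)}}=I^{r_J(I)}$ enters (it truncates $\ker\phi$ and $\coker\phi$). Two points deserve explicit justification in a written-up version: the identity $\reg G_{\mathcal F}(A)=\wt r_J(I)$ is being applied to the admissible filtration $\{\wt{I^n}\}$ rather than to an adic one, so you should cite a version valid for Hilbert filtrations with $\depth\geq d-1$ (only the inequality $\reg G_{\mathcal F}(A)\leq \wt r_J(\mathcal F)$ is actually needed); and the regularity of the finite-length modules $\ker\phi$, $\coker\phi$ should be computed with respect to the same ideal $\mathcal M$ used throughout, which is harmless since $H^0_{\mathcal M}$ of a finite-length graded module is the module itself.
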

\begin{proof}
    Set $r=r_J(I)$ and $A_j=A/(x_1,\ldots,x_{j-1})$  for $2\leq j \leq d-1$ with $A_1=A$ where $x_1,\ldots,x_{d-2}$ is a superficial sequence for $I.$
    Then from Theorem \ref{signature of ed} or from Theorem \ref{sig ed inte}, we have $\depth G_\mathcal{F}(A)\geq d-1$. Consequently, 
    by Sally's machine, $\depth G_{\mathcal{F}A_j}(A_j)\geq d-j\geq 1$ for $2 \leq j \leq d-1.$ Hence $\wt{(\mathcal{F}A_j)_n}=(\mathcal{F}A_j)_n$ for $2\leq j\leq d-1$ and for all $n\geq 0$. In particular $\wt{(\mathcal{F}A_j)_r}=(\mathcal{F}A_j)_r$ for $2 \leq j \leq d-1$. Since it is given that $\wt{I^r}=I^r$, we get   
    $$ \wt{I^rA_j}=I^rA_j$$ for $2\leq j \leq d-1$.
It follows from \cite[Theorem 4.3]{nq} that $\reg R(I)=\reg G_I(A)=r_J(I)$.
\end{proof}
\begin{theorem}\label{thm-red}
    Let the hypothesis be the same as in Theorem \ref{Theorem5.1}. Then Rossi's bound holds, i.e, $r_J(I)\leq e_1(I)-e_0(I)+\ell(A/I)+1.$
\end{theorem}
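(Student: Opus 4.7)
The plan is to reduce the statement to Theorem \ref{rossi's holds}, which already asserts Rossi's bound $r_J(I) \leq e_1(I) - e_0(I) + \ell(A/I) + 1$ under the assumption that the Ratliff-Rush filtration of $I$ behaves well modulo a superficial sequence of length $d-2$. By Remark \ref{rmk1}(ii), this assumption follows once we know $\depth G_{\mathcal{F}}(A) \geq d-1$, where $\mathcal{F} = \{\widetilde{I^n}\}_{n \geq 0}$. Moreover, the proof of Theorem \ref{rossi's holds} only uses the depth inequality itself (via the Rossi-type formula $r_J(I) \leq \sum_{n \geq 0}\ell(\widetilde{I^{n+1}}/J\widetilde{I^n}) - e_0(I) + \ell(A/I) + 1$, where the sum collapses to $e_1(I)$ when $\depth G_{\mathcal{F}}(A) \geq d-1$). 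Thus it suffices to establish this depth bound.

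The key step, therefore, is to verify $\depth G_{\mathcal{F}}(A) \geq d-1$ under either of the hypotheses (i) or (ii), together with $e_k(\mathcal{I}) = 0$ for $4 \leq k \leq d$. For hypothesis (i) I would invoke Theorem \ref{signature of ed}: the boundary equality $e_3(\mathcal{I}) = e_2(\mathcal{I})(e_2(\mathcal{I}) - 1)$ combined with $e_k(\mathcal{I}) = 0$ for $4 \leq k \leq d-1$ yields $(-1)^d e_d(\mathcal{I}) \geq 0$, and the further hypothesis $e_d(\mathcal{I}) = 0$ triggers the equality clause of that theorem, which gives exactly $\depth G_{\mathcal{F}}(A) \geq d-1$. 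For hypothesis (ii), since $\mathcal{I} = \{I^n\}$ with $I$ integrally closed, I would argue identically using Theorem \ref{sig ed inte} in place of Theorem \ref{signature of ed}.

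With the depth bound in hand, applying Theorem \ref{rossi's holds} finishes the proof, delivering $r_J(I) \leq e_1(I) - e_0(I) + \ell(A/I) + 1$.

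The substantive work, namely showing that the boundary condition on $e_3$ together with the vanishing of the higher $e_k$'s forces $\depth G_{\mathcal{F}}(A) \geq d-1$, has already been done in Theorems \ref{signature of ed} and \ref{sig ed inte}; once those are granted, no new technical obstacle arises here, and the present theorem is essentially a corollary assembling those depth results with the good-behaviour-implies-Rossi principle of Theorem \ref{rossi's holds}.
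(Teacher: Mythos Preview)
Your proposal is correct and follows the same route as the paper: obtain $\depth G_{\mathcal{F}}(A)\geq d-1$ from Theorem~\ref{signature of ed} or Theorem~\ref{sig ed inte} (with Theorem~\ref{e3_m-primary} covering $d=3$), then apply Rossi's bound \cite[Theorem~1.3]{r} to the Ratliff-Rush filtration so that $\sum_{n\geq 0}\ell(\widetilde{I^{n+1}}/J\widetilde{I^n})=e_1(I)$. One correction: your appeal to Remark~\ref{rmk1}(ii) is in the wrong direction---that remark deduces the depth bound \emph{from} good behaviour, not conversely---but since you already observe that the proof of Theorem~\ref{rossi's holds} uses only the depth inequality, this does not affect the argument.
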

\begin{proof}
By Theorem \ref{signature of ed} or by Theorem \ref{sig ed inte}, we have $\depth G_{\wt{I}}(A)\geq d-1$  so using \cite[Theorem 1.3]{r}, we have $r_J(I)\leq \displaystyle{\sum_{n\geq 0}}\ell(\wt{I^{n+1}}/J\wt{I^n})-e_0(I)+\ell(A/I)+1=e_1(I)-e_0(I)+\ell(A/I)+1$.    
\end{proof}

\end{document}